\begin{document}


\newtheorem{thm}{Theorem}[section]
\newtheorem{lem}[thm]{Lemma}
\newtheorem{cor}[thm]{Corollary}
\newtheorem{pro}[thm]{Proposition}
\theoremstyle{definition}
\newtheorem{defi}[thm]{Definition}
\newtheorem{ex}[thm]{Example}
\newtheorem{rmk}[thm]{Remark}
\newtheorem{pdef}[thm]{Proposition-Definition}
\newtheorem{condition}[thm]{Condition}

\renewcommand{\labelenumi}{{\rm(\alph{enumi})}}
\renewcommand{\theenumi}{\alph{enumi}}

\newcommand {\emptycomment}[1]{} 

\newcommand{\nc}{\newcommand}
\newcommand{\delete}[1]{}

\nc{\tred}[1]{\textcolor{red}{#1}}
\nc{\tblue}[1]{\textcolor{blue}{#1}}
\nc{\tgreen}[1]{\textcolor{green}{#1}}
\nc{\tpurple}[1]{\textcolor{purple}{#1}}
\nc{\tgray}[1]{\textcolor{gray}{#1}}
\nc{\torg}[1]{\textcolor{orange}{#1}}
\nc{\tmag}[1]{\textcolor{magenta}}
\nc{\btred}[1]{\textcolor{red}{\bf #1}}
\nc{\btblue}[1]{\textcolor{blue}{\bf #1}}
\nc{\btgreen}[1]{\textcolor{green}{\bf #1}}
\nc{\btpurple}[1]{\textcolor{purple}{\bf #1}}



\nc{\tforall}{\ \ \text{for all }} \nc{\hatot}{\,\widehat{\otimes}
\,} \nc{\complete}{completed\xspace} \nc{\wdhat}[1]{\widehat{#1}}

\nc{\ts}{\mathfrak{p}} \nc{\mts}{c_{(i)}\ot d_{(j)}}

\nc{\NA}{{\bf NA}} \nc{\LA}{{\bf Lie}} \nc{\CLA}{{\bf CLA}}

\nc{\cybe}{CYBE\xspace} \nc{\nybe}{NYBE\xspace}
\nc{\ccybe}{CCYBE\xspace}

\nc{\ndend}{pre-Novikov\xspace} \nc{\calb}{\mathcal{B}}
\nc{\rk}{\mathrm{r}}
\newcommand{\g}{\mathfrak g}
\newcommand{\h}{\mathfrak h}
\newcommand{\pf}{\noindent{$Proof$.}\ }
\newcommand{\frkg}{\mathfrak g}
\newcommand{\frkh}{\mathfrak h}
\newcommand{\Id}{\rm{Id}}
\newcommand{\gl}{\mathfrak {gl}}
\newcommand{\ad}{\mathrm{ad}}
\newcommand{\add}{\frka\frkd}
\newcommand{\frka}{\mathfrak a}
\newcommand{\frkb}{\mathfrak b}
\newcommand{\frkc}{\mathfrak c}
\newcommand{\frkd}{\mathfrak d}
\newcommand {\comment}[1]{{\marginpar{*}\scriptsize\textbf{Comments:} #1}}

\nc{\vspa}{\vspace{-.1cm}} \nc{\vspb}{\vspace{-.2cm}}
\nc{\vspc}{\vspace{-.3cm}} \nc{\vspd}{\vspace{-.4cm}}
\nc{\vspe}{\vspace{-.5cm}}


\nc{\disp}[1]{\displaystyle{#1}}
\nc{\bin}[2]{ (_{\stackrel{\scs{#1}}{\scs{#2}}})}  
\nc{\binc}[2]{ \left (\!\! \begin{array}{c} \scs{#1}\\
    \scs{#2} \end{array}\!\! \right )}  
\nc{\bincc}[2]{  \left ( {\scs{#1} \atop
    \vspace{-.5cm}\scs{#2}} \right )}  
\nc{\ot}{\otimes} \nc{\sot}{{\scriptstyle{\ot}}}
\nc{\otm}{\overline{\ot}}
\nc{\ola}[1]{\stackrel{#1}{\la}}

\nc{\scs}[1]{\scriptstyle{#1}} \nc{\mrm}[1]{{\rm #1}}

\nc{\dirlim}{\displaystyle{\lim_{\longrightarrow}}\,}
\nc{\invlim}{\displaystyle{\lim_{\longleftarrow}}\,}

\nc{\bfk}{{\bf k}} \nc{\bfone}{{\bf 1}} \nc{\rpr}{\circ}
\nc{\dpr}{{\tiny\diamond}} \nc{\rprpm}{{\rpr}}

\nc{\calao}{{\mathcal A}} \nc{\cala}{{\mathcal A}}
\nc{\calc}{{\mathcal C}} \nc{\cald}{{\mathcal D}}
\nc{\cale}{{\mathcal E}} \nc{\calf}{{\mathcal F}}
\nc{\calfr}{{{\mathcal F}^{\,r}}} \nc{\calfo}{{\mathcal F}^0}
\nc{\calfro}{{\mathcal F}^{\,r,0}} \nc{\oF}{\overline{F}}
\nc{\calg}{{\mathcal G}} \nc{\calh}{{\mathcal H}}
\nc{\cali}{{\mathcal I}} \nc{\calj}{{\mathcal J}}
\nc{\call}{{\mathcal L}} \nc{\calm}{{\mathcal M}}
\nc{\caln}{{\mathcal N}} \nc{\calo}{{\mathcal O}}
\nc{\calp}{{\mathcal P}} \nc{\calq}{{\mathcal Q}}
\nc{\calr}{{\mathcal R}} \nc{\calt}{{\mathcal T}}
\nc{\caltr}{{\mathcal T}^{\,r}} \nc{\calu}{{\mathcal U}}
\nc{\calv}{{\mathcal V}} \nc{\calw}{{\mathcal W}}
\nc{\calx}{{\mathcal X}} \nc{\CA}{\mathcal{A}}

\nc{\fraka}{{\mathfrak a}} \nc{\frakB}{{\mathfrak B}}
\nc{\frakb}{{\mathfrak b}} \nc{\frakd}{{\mathfrak d}}
\nc{\oD}{\overline{D}} \nc{\frakF}{{\mathfrak F}}
\nc{\frakg}{{\mathfrak g}} \nc{\frakm}{{\mathfrak m}}
\nc{\frakM}{{\mathfrak M}} \nc{\frakMo}{{\mathfrak M}^0}
\nc{\frakp}{{\mathfrak p}} \nc{\frakS}{{\mathfrak S}}
\nc{\frakSo}{{\mathfrak S}^0} \nc{\fraks}{{\mathfrak s}}
\nc{\os}{\overline{\fraks}} \nc{\frakT}{{\mathfrak T}}
\nc{\oT}{\overline{T}}
\nc{\frakX}{{\mathfrak X}} \nc{\frakXo}{{\mathfrak X}^0}
\nc{\frakx}{{\mathbf x}}
\nc{\frakTx}{\frakT}      
\nc{\frakTa}{\frakT^a}        
\nc{\frakTxo}{\frakTx^0}   
\nc{\caltao}{\calt^{a,0}}   
\nc{\ox}{\overline{\frakx}} \nc{\fraky}{{\mathfrak y}}
\nc{\frakz}{{\mathfrak z}} \nc{\oX}{\overline{X}}



\title[Non-abelian extensions of Lie triple systems and Wells exact sequences]{Non-abelian extensions of Lie triple systems and Wells exact sequences}

\author{Qinxiu Sun}
\address{Qinxiu Sun, Department of Mathematics, Zhejiang University of Science and Technology, Hangzhou, 310023} \email{qxsun@126.com}

\author{Shuangjian Guo*}
\address{Shuangjian Guo, Corresponding author, School of Mathematics and Statistics, Guizhou University of Finance and Economics, Guiyang, 550025}
         \email{shuangjianguo@126.com}

\subjclass[2010]{17A30, 17B62, 17B38 }

\keywords{ Lie triple system,  non-abelian extension,
Maurer-Cartan element, inducibility, Wells exact sequence}

\begin{abstract}

In this paper, we investigate non-abelian
extensions and inducibility of pairs of automorphisms of Lie triple systems.
 First, we introduce non-abelian cohomology groups and classify the non-abelian extensions in terms of non-abelian cohomology groups.
 Next, we characterize the non-abelian extensions using Maurer-Cartan elements. Furthermore, we explore the inducibility of pairs
 of automorphisms and derive the analog Wells exact sequences under the circumstance of Lie triple systems. Finally,
we state the previous results under the context of abelian extensions of Lie triple systems.

\end{abstract}

\maketitle

\vspace{-1.2cm}

\tableofcontents

\vspace{-1.2cm}

\allowdisplaybreaks

\section{Introduction}
Lie triple systems originated from Cartan's studies of Riemannian
geometry, in which he employed his classification of the real simple
Lie algebras to classify symmetric spaces. The role played by Lie
triple systems in the theory of symmetric spaces is like that of Lie
algebras in the theory of Lie groups. Lie triple systems were
investigated from the algebraic point of view by  Jacobson in
\cite{51} and studied later by Lister in \cite{27}. Subsequently,
Lie triple systems have gained broad attention and a number of works
have been gained, see \cite{00,54,53} and their references.

Extensions of some mathematical object are useful to understand the
underlying structure. The non-abelian extension is a relatively
general one among various extensions (e.g. central extensions,
abelian extensions, non-abelian extensions etc.). Non-abelian
extensions were first developed by Eilenberg and Maclane \cite{010},
which leaded to the low dimensional non-abelian group cohomology.
Then numerous works have been devoted to non-abelian extensions of
various kinds of algebras, such as Lie algebras, associative
algebras, Lie groups, pre-Lie algebras, Leibniz algebras, 3-Lie algebras, Rota-Baxter
Lie algebras, conformal algebras and Rota-Baxter Leibniz algebras,
see \cite{013,08,09,201,014,015,016,048,027,047,029} and their references.
But little is known about the non-abelian extension of Lie triple systems.
This was our first motivation for writing the present paper.

Our second motivation is to explore the inducibility of pairs
of Lie triple system automorphisms. The inducibility of a pair of automorphisms has close relationship with extensions of algebras.
 Such study originated from extensions of abstract groups \cite{67}. This theme was subsequently
extended to various algebraic structures, such as Lie algebras, Rota-Baxter
Lie algebras, Rota-Baxter groups, associative conformal algebras and Lie-Yamaguti algebras, see \cite{67,045,032,047,021,026,09,60,004}
and references therein. As byproducts, the Wells short exact sequences were derived for various kinds of algebras \cite{021, 08,026,045,09,047,004},
 which connected the relative automorphism groups and
the non-abelian second cohomology groups. Motived by these results and combined with our study of
non-abelian extension of Lie triple systems, naturally we study inducibility of pairs of Lie triple system
automorphisms and derive the Wells short exact sequences in the context of non-abelian extensions of Lie triple systems.

The paper is organized as follows. In Section 2, we recall basic
knowledge of Lie triple systems. In Section 3, we
investigate non-abelian extensions and classified the non-abelian
extensions using non-abelian 3-cocycles. In Section 4, we characterize
non-abelian extensions in terms of Maurer-Cartan elements. In Section 5, we address the inducibility of
pairs of Lie triple system automorphisms and give some equivalent conditions
In Section 6, we consider the analogue of
the Wells short exact sequences under the case of Lie triple systems. In Section 7,
we explain the previous results in the context of abelian extensions of Lie triple systems.

Throughout the paper, let $k$ be a field. Unless otherwise
specified, all vector spaces and algebras are over $k$.

\setlength{\baselineskip}{1.25\baselineskip}


\section{Preliminary on Lie triple systems}

A Lie triple system is a vector space $\mathfrak g$ together with a
trilinear map $[ \ , \ , \ ]: \mathfrak g\otimes \mathfrak g\otimes
\mathfrak g\longrightarrow \mathfrak g$ satisfying
\begin{equation}\label{DLts1}[x_1,x_1,x_2]=0,\end{equation}
\begin{equation}\label{DLts2}[x_1,x_2,x_3]+[x_2,x_3,x_1]+[x_3,x_1,x_2]=0,\end{equation}
\begin{equation}\label{DLts3}[x_1,x_2,[y_1,y_2,y_3]]=[[x_1,x_2,y_1],y_2,y_3]+[y_1,[x_1,x_2,y_2],y_3]+[y_1,y_2,[x_1,x_2,y_3]],\end{equation}
for all $x_i,y_i\in \mathfrak g ~(i=1,2,3)$.

 Eqs.~~(\ref{DLts1}) and (\ref{DLts2}) yield that
\begin{equation}\label{DLts4}[x_1,x_2,x_3]+[x_2,x_1,x_3]=0.\end{equation}

A subspace $M$ of $\mathfrak g$ is an ideal of $\mathfrak g$ if
$ [M, \mathfrak g,\mathfrak g ] \subseteq M$ and $[\mathfrak g,\mathfrak g, M] \subseteq M$.
 An ideal $M$ of $\mathfrak g$ is said to be an abelian ideal of $\mathfrak g$ if
 $[M, M,\mathfrak g ] = [\mathfrak g,M, M ] =0$.

A representation of a Lie triple system $\mathfrak g$ consists of a
vector space $V$ together with a bilinear map $\theta:\mathfrak
g\wedge \mathfrak g\longrightarrow \mathfrak{gl}(V)$ satisfying
\begin{equation}\label{RLts1}\theta(x_3, x_4)\theta(x_1, x_2)-\theta(x_2, x_4)\theta(x_1,
x_3)-\theta(x_1,[x_2,x_3,x_4])+D_{\theta}(x_2, x_3)\theta(x_1,
x_4)=0,\end{equation}
\begin{equation}\label{RLts2}
\theta(x_3, x_4)D_{\theta}(x_1, x_2)-D_{\theta}(x_1, x_2)\theta(x_3,
x_4)+\theta([x_1,x_2,x_3],x_4)+\theta(x_3,[x_1,x_2,x_4])=0,\end{equation}
for any $x_i\in \mathfrak g~ (i=1,2,3,4)$, where
\begin{equation}\label{RLts3}
D_{\theta}(x_1,
x_2)=\theta(x_2, x_1)-\theta(x_1, x_2).\end{equation}

One can refer to \cite{21,22, 27, 34,35,135} for more information on Lie
triple systems and representations.

Let $\mathfrak g$ be a Lie triple system and $(V,\theta)$ be a
representation of it. Denote the $(2n+1)$-cochains group of
$\mathfrak g$ with coefficients in $V$ by $ C^{2n+1}(\mathfrak
g,V)$, where $f\in C^{2n+1}(\mathfrak g,V)$ is a multilinear map
$f:\mathfrak g\times \cdot\cdot\cdot \times \mathfrak
g\longrightarrow V$ satisfying
\begin{equation}\label{CO1}f(x_1,\cdot\cdot\cdot,x_{2n-2},x,x,y)=0\end{equation} and
\begin{equation}\label{CO2}f(x_1,\cdot\cdot\cdot,x_{2n-2},x,y,z)+f(x_1,\cdot\cdot\cdot,x_{2n-2},y,z,x)+f(x_1,\cdot\cdot\cdot,x_{2n-2},z,x,y)=0\end{equation}
for all $x_i,x,y,z\in \mathfrak g ~(i=1,\cdots, 2n-2)$ and $ C^{1}(\mathfrak
g,V)=\hbox{Hom}(\mathfrak g,V)$.

 The Yamaguti coboundary operator $\delta:C^{2n-1}(\mathfrak g,V)\longrightarrow C^{2n+1}(\mathfrak g,V)$ is given by
\begin{eqnarray*}&&(\delta f)(x_1,\cdot\cdot\cdot,x_{2n+1})
\\&=&\theta(x_{2n},x_{2n+1})f(x_1,\cdot\cdot\cdot,x_{2n-1})-\theta(x_{2n-1},x_{2n+1})f(x_1,\cdot\cdot\cdot,x_{2n-2},x_{2n})
\\&&+
\sum_{i=1}^{n}(-1)^{i+n}D_{\theta}(x_{2i-1},x_{2i})f(x_1,\cdot\cdot\cdot,x_{2i-2},x_{2i+1},\cdot\cdot\cdot,x_{2n+1})
\\&&+
\sum_{i=1}^{n}\sum_{j=2i+1}^{2n+1}(-1)^{i+n+1}f(x_1,\cdot\cdot\cdot,x_{2i-2},x_{2i+1},\cdot\cdot\cdot,x_{j-1},[x_{2i-1},x_{2i},x_j]
,\cdot\cdot\cdot,x_{2n+1})
\end{eqnarray*}
for any $x_1,x_2,\cdot\cdot\cdot,x_{2n+1}\in \mathfrak g$. Denote the set of all $(2n-1)$-cocycles and $(2n-1)$-coboundaries
respectively by $Z^{2n-1}(\mathfrak g,V)$ and $ B^{2n-1}(\mathfrak g,V)$.
Thus, we can define the $(2n-1)$-cohomology group by
\begin{equation*}
	H^{2n-1}(\mathfrak g,V)=\left\{
	\begin{aligned}
		&Z^{1}(A, V ),&n=1,\\
		&Z^{2n-1}(A,V)/B^{2n-1}(A,V),&n>1.
	\end{aligned}
	\right.
\end{equation*}
See \cite{30, 35} for more details on Yamaguti cohomology.


\section{Non-abelian
extensions of Lie triple systems}

In this section, we are devoted to considering non-abelian
extensions and non-abelian 3-cocycles of Lie triple systems.

\begin{defi} Let $\mathfrak g$ and $\mathfrak h$ be two Lie triple systems. A non-abelian
extension of $\mathfrak g$ by $\mathfrak h$ is a Lie triple system $\hat{\mathfrak g}$,
which fits into a short exact sequence of Lie triple systems
$$\mathcal{E}:0\longrightarrow\mathfrak h\stackrel{i}{\longrightarrow} \hat{\mathfrak g}\stackrel{p}{\longrightarrow}\mathfrak g\longrightarrow0.$$
When $\mathfrak h$ is an abelian ideal of $\hat{\mathfrak g}$, the extension $\mathcal{E}$ is called an
abelian extension of $\mathfrak g$ by $\mathfrak h$.
Denote an extension as above simply by $\hat{\mathfrak g}$ or $\mathcal{E}$. A section of $p$
is a linear map $s: \mathfrak g\longrightarrow  \hat{\mathfrak g} $
such that $ps = I_{\mathfrak g}$.
\end{defi}

\begin{defi}
Let $ \hat{\mathfrak g}_1$
and $ \hat{\mathfrak g}_2$
be two non-abelian extensions of $\mathfrak g$ by $\mathfrak h$.
 They are said to be
equivalent if there is a homomorphism of Lie triple systems
$f:\hat{\mathfrak g}_1\longrightarrow \hat{\mathfrak g}_2$ such that
the following commutative diagram holds:
 \begin{equation}\label{Ene1} \xymatrix{
  0 \ar[r] & \mathfrak h\ar@{=}[d] \ar[r]^{i_1} & \hat{\mathfrak g}_1\ar[d]_{f} \ar[r]^{p_1} & \mathfrak g \ar@{=}[d] \ar[r] & 0\\
 0 \ar[r] & \mathfrak h \ar[r]^{i_2} & \hat{\mathfrak g}_2 \ar[r]^{p_2} & \mathfrak g  \ar[r] & 0
 .}\end{equation}
\end{defi}
Denote by $\mathcal{E}_{nab}(\mathfrak g,\mathfrak h)$ the set of all non-abelian extensions of $\mathfrak g$ by $\mathfrak h$.

Next, we define a non-abelian cohomology group and show that
the non-abelian extensions are classified by the non-abelian cohomology groups.

\begin{defi} Let $\mathfrak g$ and $\mathfrak h$ be two Lie triple systems.
A non-abelian 3-cocycle on $\mathfrak g$ with values in
 $\mathfrak h$ is a triple $(\omega,\theta,\rho)$
of maps such that $\omega:\mathfrak g\otimes\mathfrak g\otimes\mathfrak
g\longrightarrow \mathfrak h$ is trilinear, $\theta:\mathfrak g\wedge \mathfrak
g\longrightarrow \mathfrak{gl}(\mathfrak h)$ is bilinear
and $\rho:\mathfrak g\longrightarrow
\mathrm{Hom} (\mathfrak h\wedge \mathfrak h,\mathfrak h)$ is linear,
and the following identities are satisfied for all
$x_i,y_i, x, y, z,w\in \mathfrak g~ (i=1,2,3), a,b,c\in \mathfrak h$,
\begin{equation}\label{Lts02}\omega(x,y,z)+\omega(y,z,x)+\omega(z,x,y)=0,
 \end{equation}
\begin{align}\label{Lts1}&D_{\theta}(x_1,x_2)\omega(y_1,y_2,y_3)+\omega(x_1,x_2,[y_1,y_2,y_3]_{\mathfrak
g})=\omega([x_1,x_2,y_1]_{\mathfrak
g},y_2,y_3)\nonumber\\&+\theta(y_2,y_3)\omega(x_1,x_2,y_1)+\omega(y_1,[x_1,x_2,y_2]_{\mathfrak
g},y_3)-\theta(y_1,y_3)\omega(x_1,x_2,y_2)\nonumber\\&+\omega(y_1,y_2,[x_1,x_2,y_3]_{\mathfrak
g})+D_{\theta}(y_1,y_2)\omega(x_1,x_2,y_3),
 \end{align}
\begin{align}\label{Lts2}&D_{\theta}(x,y)\theta(z,w)a-\theta(z,w)D_{\theta}(x,y)a\nonumber\\=&\theta([x,y,z]_{\mathfrak
g},w)a+\theta(z,[x,y,w]_{\mathfrak g})a-D_{\rho}(w)(\omega(x,y,z),a)-\rho(z)(a,\omega(x,y,w)),
 \end{align}
\begin{equation}\label{Lts3}\theta(x,[y,z,w]_{\mathfrak
g})a-\rho(x)(a,\omega(y,z,w))=\theta(z,w)\theta(x,y)a-\theta(y,w)\theta(x,z)a+D_{\theta}(y,z)\theta(x,w)a,
 \end{equation}
\begin{align}\label{Lts4}D_{\theta}(x,y)\rho(z)(a,b)=&\rho(z)(D_{\theta}(x,y)a,b)+\rho([x,y,z]_{\mathfrak
g})(a,b)\nonumber\\&+[\omega(x,y,z),a,b]_{\mathfrak
h}+\rho(z)(a,D_{\theta}(x,y)b),
 \end{align}
\begin{align}\label{Lts5}D_{\theta}(y,z)\rho(x)(a,b)=\rho(x)(a,D_{\theta}(y,z)b)-\rho(z
)(\theta(x,y)a,b)+\rho(y)(\theta(x,z)a,b),
 \end{align}
\begin{equation}\label{Lts6}\theta(y,z)(\rho(x)(a,b))=\rho(x)(a,\theta(y,z)b)-D_{\rho}(z
)(\theta(x,y)a,b)-\rho(y)(b,\theta(x,z)a),
 \end{equation}
\begin{align}\label{Lts7}D_{\theta}(x,y)([a,b,c]_{\mathfrak
h})=[D_{\theta}(x,y)a,b,c]_{\mathfrak
h}+[a,D_{\theta}(x,y)b,c]_{\mathfrak
h}+[a,b,D_{\theta}(x,y)c]_{\mathfrak h},
 \end{align}
\begin{equation}\label{Lts8}\rho(x)(a,\rho(y)(b,c))=\rho(y)(\rho(x)(a,b),c)+\rho(y)(b,\rho(x)(a,c))
-[\theta(x,y)a,b,c]_{\mathfrak h},
 \end{equation}
\begin{equation}\label{Lts9}[a,b,\theta(x,y)c]_{\mathfrak h}=\theta(x,y)[a,b,c]_{\mathfrak
h}-D_{\rho}(y)(D_{\rho}(x)(a,b),c)-\rho(x)(c,D_{\rho}(y)(a,b)),
 \end{equation}
\begin{equation}\label{Lts10}[a,b,\rho(x)(c,d)]_{\mathfrak h}=\rho(x)([a,b,c]_{\mathfrak
h},d)-[c,D_{\rho}(x)(a,b),d]_{\mathfrak
h}+\rho(x)(c,[a,b,d]_{\mathfrak h}),
 \end{equation}
\begin{equation}\label{Lts11}\rho(x)(a,[b,c,d]_{\mathfrak
h})=[\rho(x)(a,b),c,d]_{\mathfrak h}+[b,\rho(x)(a,c),d]_{\mathfrak
h}+[b,c,\rho(x)(a,d)]_{\mathfrak h},
 \end{equation}
 where
 \begin{equation}\label{Lts01}D_{\theta}(x,y)a-\theta(y,x)a+\theta(x,y)a=0,
~~\rho(x)(a,b)+D_{\rho}(x)(a,b)-\rho(x)(b,a)=0.
 \end{equation}
\end{defi}

\begin{defi} Let $(\omega_1,\theta_1,\rho_1)$ and
$(\omega_2,\theta_2,\rho_2)$ be two
non-abelian 3-cocycles on $\mathfrak g$ with values in
 $\mathfrak h$. They are said to be equivalent if there
exists a linear map $\varphi:\mathfrak g\longrightarrow\mathfrak h$
such that for all $x, y, z\in \mathfrak g$ and $a,b\in \mathfrak h$,
the following equalities hold:
\begin{align}\label{E2}&\omega_1(x,y,z)-\omega_2(x,y,z)=\theta_{2}(x,z)\varphi(y)-D_{\theta_2}(x,y)\varphi(z)
+\rho_2(x)(\varphi(y),\varphi(z))-\theta_{2}(y,z)\varphi(x)
\nonumber\\&+D_{\rho_2}(z)(\varphi(x),\varphi(y))-\rho_2(y)(\varphi(x),\varphi(z))-[\varphi(x),\varphi(y),\varphi(z)]_{\mathfrak
h}+\varphi([x,y,z]_{\mathfrak
g}),
 \end{align}
   \begin{equation}\label{E4}\theta_1(x,y)a-\theta_2(x,y)a=\rho_{2}(x)(a,\varphi(y))-D_{\rho_2}(y)(a,\varphi(x))+[a,\varphi(x),\varphi(y)]_{\mathfrak
h},
 \end{equation}
\begin{equation}\label{E6}\rho_1(x)(a,b)-\rho_2(x)(a,b)=[a,\varphi(x),b]_{\mathfrak
h}.\end{equation}
 \end{defi}
Using Eqs.~~(\ref{Lts01}), (\ref{E4}) and (\ref{E6}), we get
 \begin{equation}\label{E3}D_{\theta_1}(x,y)a-D_{\theta_2}(x,y)a=\rho_2(y)(\varphi(x),a)-\rho_{2}(x)(\varphi(y),a)+[\varphi(x),\varphi(y),a]_{\mathfrak
h},
 \end{equation}
 and
 \begin{equation}\label{E5}D_{\rho_1}(x)(a,b)-D_{\rho_2}(x)(a,b)=[b,a,\varphi(x)]_{\mathfrak
h}.\end{equation}

 Denote the set of all non-abelian 3-cocycles on $\mathfrak g$ with values in
 $\mathfrak h$ by $Z_{nab}^{3}(\mathfrak g,\mathfrak h)$.
The equivalent class of a non-abelian 3-cocycle $(\omega,\theta,\rho)$ is denoted by $[(\omega,\theta,\rho)]$.
The quotient of $Z_{nab}^{3}(\mathfrak g,\mathfrak h)$ by the above equivalence
relation is denoted by $H_{nab}^{3}(\mathfrak g,\mathfrak h)$, which is called non-abelian cohomology
group of $\mathfrak g$ with coefficients in $\mathfrak h$.

Using the above notations, we define a trilinear map $[  \ , \ , \ ]_{(\omega,\theta,\rho)}$ on $\mathfrak g\oplus \mathfrak h$ by
\begin{align}\label{NLts}[x+a,y+b,z+c]_{(\omega,\theta,\rho)}=&[x,y,z]_{\mathfrak g}+\omega(x,y,z)+D_{\theta}(x,y)c+\theta(y,z)a-\theta(x,z)b
\nonumber\\&+D_{\rho}(z)(a,b)+\rho(x)(b,c)-\rho(y)(a,c)+[a,b,c]_{\mathfrak
h}\end{align}
for all $x,y,z\in \mathfrak g$ and $a,b,c\in \mathfrak h$.

\begin{pro} \label{LY} With the above notions,
$(\mathfrak g\oplus \mathfrak h,[  \  , \ , \
]_{(\omega,\theta,\rho)})$ is a Lie triple system if and only if the triple $(\omega,\theta,\rho)$
is a non-abelian 3-cocycle. Denote this Lie triple system
 $(\mathfrak g\oplus \mathfrak h,[  \  , \ , \
]_{(\omega,\theta,\rho)})$ simply by $\mathfrak g\oplus_{(\omega,\theta,\rho)}\mathfrak h$.
\end{pro}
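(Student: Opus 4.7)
The plan is to verify the three defining axioms \eqref{DLts1}--\eqref{DLts3} of a Lie triple system for the bracket $[\cdot,\cdot,\cdot]_{(\omega,\theta,\rho)}$ on the direct sum $\mathfrak g\oplus\mathfrak h$ and show that each axiom, after collecting terms, is equivalent to a distinct block of conditions in the definition of a non-abelian $3$-cocycle. Throughout, I would exploit multilinearity: it suffices to test each axiom on pure tensors $x_i+a_i$ and then gather the resulting terms according to how many arguments live in $\mathfrak g$ versus $\mathfrak h$. Because $\mathfrak g$ and $\mathfrak h$ are already Lie triple systems and $[a,b,c]_{\mathfrak h}$, $[x,y,z]_{\mathfrak g}$ enter \eqref{NLts} as separate summands, the purely $\mathfrak g$ and purely $\mathfrak h$ components of every identity will be automatic, and the remaining mixed components give exactly the cocycle relations.

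First I would handle axiom \eqref{DLts1}: setting $x=y$ and $a=b$ in \eqref{NLts} and using the defining relations \eqref{Lts01} for $D_\theta$ and $D_\rho$, the $\theta$ and $\rho$ contributions cancel in pairs, $D_\theta(x,x)=0$, $D_\rho(x)(a,a)=0$, and what remains reduces to $[x,x,z]_{\mathfrak g}+\omega(x,x,z)+[a,a,c]_{\mathfrak h}=0$, which uses the implicit convention that $\omega$ (being a non-abelian $3$-cochain, modelled on \eqref{CO1}) satisfies $\omega(x,x,z)=0$. Next, axiom \eqref{DLts2} (ternary cyclicity) is tested on $x+a$, $y+b$, $z+c$: the purely $\mathfrak g$ and purely $\mathfrak h$ parts are automatic; the single-$\mathfrak h$ terms in $a$, $b$, $c$ form three separate cyclic sums that each collapse to zero by \eqref{Lts01}; and the $\omega$-contribution is precisely $\omega(x,y,z)+\omega(y,z,x)+\omega(z,x,y)$, giving \eqref{Lts02}.

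The heart of the argument is axiom \eqref{DLts3}, which I would verify by expanding
\[
[u_1,u_2,[v_1,v_2,v_3]]_{(\omega,\theta,\rho)}-[[u_1,u_2,v_1]_{(\omega,\theta,\rho)},v_2,v_3]_{(\omega,\theta,\rho)}-[v_1,[u_1,u_2,v_2]_{(\omega,\theta,\rho)},v_3]_{(\omega,\theta,\rho)}-[v_1,v_2,[u_1,u_2,v_3]_{(\omega,\theta,\rho)}]_{(\omega,\theta,\rho)}
\]
with $u_i=x_i+a_i$ and $v_j=y_j+b_j$, and then grouping terms according to the number of $\mathfrak h$-arguments. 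The all-$\mathfrak g$-inputs, $\mathfrak h$-output piece is exactly identity \eqref{Lts1} for $\omega$. The four-$\mathfrak g$/one-$\mathfrak h$ pieces split according to which slot is in $\mathfrak h$ and produce \eqref{Lts2}--\eqref{Lts6} (the coherence identities for $\theta$ and $\rho$ twisted by $\omega$). The three-$\mathfrak g$/two-$\mathfrak h$ pieces give \eqref{Lts7}--\eqref{Lts9} (the derivation and compatibility conditions on $\theta$ and $\rho$ with respect to $[\cdot,\cdot,\cdot]_{\mathfrak h}$), and the two-$\mathfrak g$/three-$\mathfrak h$ pieces give \eqref{Lts10}--\eqref{Lts11}. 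The pure-$\mathfrak h$ part is the triple Jacobi identity \eqref{DLts3} for $\mathfrak h$ itself. Running this bookkeeping in reverse shows the converse: if all of \eqref{Lts02}--\eqref{Lts11} hold (together with \eqref{Lts01}), then the displayed sum vanishes and $\mathfrak g\oplus_{(\omega,\theta,\rho)}\mathfrak h$ is a Lie triple system.

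The main obstacle is not conceptual but organizational: the Jacobi-type identity produces a large number of terms, and one has to match each of \eqref{Lts1}--\eqref{Lts11} with exactly one multi-linearity type. I would streamline this by fixing, for each cocycle equation, a canonical test input (e.g.\ all variables in $\mathfrak g$ with $a_i=b_j=0$ for \eqref{Lts1}; taking $a_1\neq 0$ and all other $\mathfrak h$-components zero for \eqref{Lts2}, and so on) so that only one condition is probed at a time; the symmetry conventions built into \eqref{Lts01} then guarantee that the full mixed identity follows from these specializations by multilinearity, and both directions of the equivalence are established simultaneously.
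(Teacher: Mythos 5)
Your overall strategy is exactly the paper's: expand each Lie triple system axiom for the bracket \eqref{NLts} on pure tensors and sort the resulting terms by how the arguments distribute between $\mathfrak g$ and $\mathfrak h$, matching each homogeneous piece with one of the identities \eqref{Lts02}--\eqref{Lts11} and \eqref{Lts01}. Your treatment of \eqref{DLts1} and \eqref{DLts2} is fine (and you are right that one also needs $\omega(x,x,z)=0$, a point the paper leaves implicit).

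However, your dictionary for axiom \eqref{DLts3} is systematically off by one level, and one whole case is missing. Counting arguments in each identity: \eqref{Lts2} and \eqref{Lts3} involve four elements of $\mathfrak g$ and one of $\mathfrak h$; \eqref{Lts4}--\eqref{Lts6} involve three and two; \eqref{Lts7}--\eqref{Lts9} involve two and three; and \eqref{Lts10}--\eqref{Lts11} involve one element of $\mathfrak g$ and four of $\mathfrak h$. So your assignment (four-$\mathfrak g$/one-$\mathfrak h$ gives \eqref{Lts2}--\eqref{Lts6}; three/two gives \eqref{Lts7}--\eqref{Lts9}; two/three gives \eqref{Lts10}--\eqref{Lts11}) is wrong as stated, and the one-$\mathfrak g$/four-$\mathfrak h$ terms --- which are precisely the source of \eqref{Lts10} and \eqref{Lts11} --- are never accounted for in your outline. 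The paper's proof makes the correct assignment and moreover subdivides each count by the \emph{position} of the $\mathfrak h$-arguments (its cases (i)--(xi)); this finer splitting matters, because for instance within the three-$\mathfrak g$/two-$\mathfrak h$ count the argument patterns $(x,y,z,a,b)$, $(x,a,y,z,b)$ and $(x,a,y,b,z)$ yield the three distinct identities \eqref{Lts4}, \eqref{Lts5} and \eqref{Lts6}, so "number of $\mathfrak h$-slots" alone does not isolate a single cocycle equation. The method would succeed if carried out honestly, but the bookkeeping you propose to rely on must be corrected before the outline becomes a proof.
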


\begin{proof}
$(\mathfrak g\oplus \mathfrak h,[  \  , \ , \
]_{(\omega,\theta,\rho)})$ is a Lie triple system if and only if
Eqs.~~(\ref{DLts1})-(\ref{DLts3}) hold for $[ \ , \ , \
]_{(\omega,\theta,\rho)}$. In fact, it is easy to check that Eqs.~~(\ref{DLts1})-(\ref{DLts2}) hold
for $[ \ , \ , \ ]_{(\omega,\theta,\rho)}$ if and only if (\ref{Lts02}), (\ref{Lts01})
hold.
 For the Eq.~~(\ref{DLts3}), we discuss it for the following cases:
 for all $x_1,x_2,y_1,y_2,y_3\in \mathfrak g\oplus \mathfrak h$,

(I) when all of the five elements $x_1,x_2,y_1,y_2,y_3$ belongs to
$\mathfrak g$, (\ref{DLts3}) holds if and only if (\ref{Lts1})
holds.

(II) when $x_1,x_2,y_1,y_2,y_3$ equal to:

(i) $x,y,z,w,a $ or $x,y,z,a,w $ or $x,a,y,z,w $
  respectively, (\ref{DLts3}) holds if and only if (\ref{Lts2}) holds.

(ii) $x,y,a,z,w$ or $a,x,y,z,w $ respectively, (\ref{DLts3}) holds
for if and only if (\ref{Lts3}) holds.

(iii) $x,y,z,a,b $ or $x,y,a,b,z $ or $x,y,a,z,b $ respectively,
(\ref{DLts3}) holds if and only if (\ref{Lts4}) holds.

(iv) $x,a,y,z,b $ or $a,x,y,z,b $ respectively, (\ref{DLts3}) holds
if and only if (\ref{Lts5}) holds.

(v) $x,a,y,b,z $ or $a,x,y,b,z $ or $x,a,b,y,z $ or $a,x,b,y,z $
respectively, (\ref{DLts3}) holds if and only if (\ref{Lts6}) holds.

(vi) $a,b,x,y,z $, (\ref{DLts3}) holds if and only if (\ref{Lts4}),
(\ref{Lts6}) hold.

(vii) $x,y,a,b,c $, (\ref{DLts3}) holds if and only if (\ref{Lts7})
 holds.

(viii) $x,a,y,b,c $ or $a,x,y,b,c $ or $x,a,b,c,y $ or $a,x,b,c,y $
or $a,x,b,y,c $ or $x,a,b,y,c $, (\ref{DLts3}) holds if and only if
(\ref{Lts8}) holds.

(ix) $a,b,x,c,y $ or $a,b,c,x,y $ or $a,b,x,y,c$, (\ref{DLts3})
holds if and only if (\ref{Lts9}) holds.

(x) $a,b,c,d,x $ or $a,b,c,x,d $ or $a,b,x,c,d$, (\ref{DLts3}) holds
if and only if (\ref{Lts10}) holds.

(xi) $a,x,b,c,d$ or $x,a,b,c,d $, (\ref{DLts3}) holds if and only if
(\ref{Lts11}) holds,

where $x,y,z,w\in \mathfrak g$ and $a,b,c,d\in \mathfrak h$ in all
the cases of (i)-(xi).

 This completes the proof.
\end{proof}

Let
 $\mathcal{E}:0\longrightarrow\mathfrak h\stackrel{i}{\longrightarrow} \hat{\mathfrak g}\stackrel{p}{\longrightarrow}\mathfrak g\longrightarrow0$
be a non-abelian extension of $\mathfrak g$ by
$\mathfrak h$ with a section $s$ of $p$.
Define $\omega_{s}:\mathfrak g\otimes \mathfrak g\otimes\mathfrak
g\rightarrow \mathfrak h,~\theta_{s}:\mathfrak g\wedge \mathfrak
g\rightarrow\mathfrak{gl}(\mathfrak h),~\rho_{s}:\mathfrak g\rightarrow
\mathrm{Hom} (\mathfrak h\wedge \mathfrak h,\mathfrak h)$
 respectively by
\begin{equation}\label{C1}\omega_{s}(x,y,z)=[s(x),s(y),s(z)]_{\hat{\mathfrak g}}-s[x,y,z]_{\mathfrak g},\end{equation}
\begin{equation}\label{C2}\theta_{s}(x,y)a=[a,s(x),s(y)]_{\hat{\mathfrak g}},~~~~\rho_{s}(x)(a,b)=[s(x),a,b]_{\hat{\mathfrak g}}.\end{equation}
By Eq.~~(\ref{C2}), we have
\begin{equation}\label{C3}D_{\theta_{s}}(x,y)a=[s(x),s(y),a]_{\hat{\mathfrak g}},~~~~
D_{\rho_{s}}(x)(a,b)=[a,b,s(x)]_{\hat{\mathfrak
g}}\end{equation}
for any $x,y,z\in \mathfrak g,a,b\in \mathfrak h$.

By direct computations, we have
\begin{pro} \label{CY} With the above notions, $(\omega_{s},\theta_{s},\rho_{s})$ is a
non-abelian 3-cocycle on $\mathfrak g$ with values in
 $\mathfrak h$. We call it the non-abelian 3-cocycle corresponding to the extension $\mathcal{E}$ induced by $s$.
 Naturally, $(\mathfrak g\oplus\mathfrak h, [ \ , \ , \ ]_{(\omega_{s},\theta_{s},\rho_{s})})$ is
 a Lie triple system. Denote this Lie triple system simply by $\mathfrak g\oplus_{(\omega_{s},\theta_{s},\rho_{s})}\mathfrak h$.
\end{pro}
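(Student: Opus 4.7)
The plan is to exploit Proposition \ref{LY} in reverse: since the triple $(\omega_s,\theta_s,\rho_s)$ has already been used in (\ref{NLts}) to define a trilinear operation on $\mathfrak{g}\oplus\mathfrak{h}$, it suffices to show that this operation actually makes $\mathfrak{g}\oplus\mathfrak{h}$ into a Lie triple system, for Proposition \ref{LY} will then immediately force $(\omega_s,\theta_s,\rho_s)$ to be a non-abelian $3$-cocycle. The idea is to construct a linear isomorphism
$$\Phi\colon \mathfrak{g}\oplus\mathfrak{h}\longrightarrow\hat{\mathfrak{g}},\qquad \Phi(x+a)=s(x)+i(a),$$
which is a bijection because $s$ is a section of $p$ and $i(\mathfrak{h})=\ker p$, and then to check that $\Phi$ intertwines $[\,\cdot\,,\,\cdot\,,\,\cdot\,]_{(\omega_s,\theta_s,\rho_s)}$ with the bracket of $\hat{\mathfrak{g}}$. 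Once that identification is in place, $\mathfrak{g}\oplus\mathfrak{h}$ inherits the Lie triple system structure of $\hat{\mathfrak{g}}$, and both assertions of the proposition follow at once.

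The core step is to expand
$$[s(x)+a,\,s(y)+b,\,s(z)+c]_{\hat{\mathfrak{g}}}$$
trilinearly into eight summands and match each one against a term of (\ref{NLts}). The top summand $[s(x),s(y),s(z)]_{\hat{\mathfrak{g}}}$ equals $s[x,y,z]_{\mathfrak{g}}+\omega_s(x,y,z)$ by (\ref{C1}). The three summands with exactly one factor from $\mathfrak{h}$ give, via the defining formulas (\ref{C2})--(\ref{C3}), the terms $\theta_s(y,z)a$, $D_{\theta_s}(x,y)c$ and $-\theta_s(x,z)b$, where the minus sign in the last one is produced by the antisymmetry $[s(x),b,s(z)]_{\hat{\mathfrak{g}}}=-[b,s(x),s(z)]_{\hat{\mathfrak{g}}}$ guaranteed by (\ref{DLts4}). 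Analogously, the three summands with two factors from $\mathfrak{h}$ yield $\rho_s(x)(b,c)$, $D_{\rho_s}(z)(a,b)$ and $-\rho_s(y)(a,c)$. Finally, $[a,b,c]_{\hat{\mathfrak{g}}}=[a,b,c]_{\mathfrak{h}}$ because $i$ is a homomorphism of Lie triple systems. Collecting these, we recover precisely the right-hand side of (\ref{NLts}) with $s[x,y,z]_{\mathfrak{g}}$ in the $\mathfrak{g}$-component and everything else in the $\mathfrak{h}$-component (which is internally consistent since $\mathfrak{h}=\ker p$ is a two-sided ideal of $\hat{\mathfrak{g}}$, so all seven mixed summands really do land in $i(\mathfrak{h})$).

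With the identity $\Phi\bigl([u,v,w]_{(\omega_s,\theta_s,\rho_s)}\bigr)=[\Phi(u),\Phi(v),\Phi(w)]_{\hat{\mathfrak{g}}}$ established, $\Phi$ becomes a Lie triple system isomorphism, so $\mathfrak{g}\oplus_{(\omega_s,\theta_s,\rho_s)}\mathfrak{h}$ is a Lie triple system; Proposition \ref{LY} then gives that $(\omega_s,\theta_s,\rho_s)$ lies in $Z^{3}_{nab}(\mathfrak{g},\mathfrak{h})$. The only real obstacle in carrying this out is the clerical one of tracking the eight trilinear expansion terms and of getting the two antisymmetry-induced signs right; no new ingredient beyond the ideal property of $\mathfrak{h}$, the homomorphism property of $i$, and the antisymmetry (\ref{DLts4}) is required.
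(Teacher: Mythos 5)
Your argument is correct, but it is not the route the paper takes: the paper simply asserts the proposition ``by direct computations,'' meaning a case-by-case verification of the cocycle identities (\ref{Lts02})--(\ref{Lts11}) for $(\omega_s,\theta_s,\rho_s)$ from the axioms (\ref{DLts1})--(\ref{DLts3}) in $\hat{\mathfrak g}$. You instead transport the structure: you verify the single computational identity $\Phi\bigl([u,v,w]_{(\omega_s,\theta_s,\rho_s)}\bigr)=[\Phi(u),\Phi(v),\Phi(w)]_{\hat{\mathfrak g}}$ for $\Phi(x+a)=s(x)+i(a)$, which requires only trilinearity, the definitions (\ref{C1})--(\ref{C3}), the antisymmetry (\ref{DLts4}), and the fact that $\ker p$ is an ideal so every mixed term lands in $i(\mathfrak h)$; since $\Phi$ is a linear bijection and $\hat{\mathfrak g}$ is a Lie triple system, $(\mathfrak g\oplus\mathfrak h,[\ ,\ ,\ ]_{(\omega_s,\theta_s,\rho_s)})$ is one too, and the ``only if'' direction of Proposition \ref{LY} then yields the cocycle property with no further work. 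There is no circularity, because the intertwining identity does not presuppose that either bracket satisfies the Lie triple system axioms. What your approach buys is economy: the eleven identities are checked once and for all in Proposition \ref{LY}, and your eight-term expansion is exactly the computation the paper anyway defers to when it later claims that $f(a+s(x))=a+x$ (the inverse of your $\Phi$) is a homomorphism exhibiting the equivalence $\mathcal E\simeq\mathcal E_{(\omega_s,\theta_s,\rho_s)}$ --- you have merely observed that this check can be performed first and made to carry the whole burden. What the paper's direct verification buys in exchange is independence from Proposition \ref{LY}, but since that proposition precedes this one and is stated as an equivalence, your shortcut is perfectly legitimate here.
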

In the following, we denote $(\omega_{s},\theta_{s},\rho_{s})$
by $(\omega,\theta,\rho)$ without ambiguity.

\begin{rmk} \label{Rk1} When $\mathcal{E}$ is an abelian extension of $\mathfrak g$ by
 $\mathfrak h$, the maps $\rho_s$ given by (\ref{C2}) becomes to zero. Then
 $(\mathfrak h,\theta_s)$ is a representation of $\mathfrak g$ \cite{35}.
Moreover, $\omega_s$ is a 3-cocycle of $\mathfrak g$ with coefficients in the representation $(\mathfrak h,\theta_s)$.
We call it the 3-cocycle corresponding to the abelian extension $\mathcal{E}$ induced by $s$. We always denote
$\omega_{s}$ simply by $\omega$ if there is no confusion.
 \end{rmk}

 \begin{lem} \label{Le1} Let
 $\mathcal{E}:0\longrightarrow\mathfrak h\stackrel{i}{\longrightarrow} \hat{\mathfrak g}\stackrel{p}{\longrightarrow}\mathfrak g\longrightarrow0$
be a non-abelian extension of $\mathfrak g$ by
$\mathfrak h$ with different sections $s_1,s_2$ of $p$. Assume that
 $(\omega_i,\theta_i,\rho_i)$ is the non-abelian 3-cocycle
 corresponding to the extension $\mathcal{E}$ induced by $s_i$~(i=1,2).
 Then $(\omega_1,\theta_1,\rho_1)$ and $(\omega_2,\theta_2,\rho_2)$
 are equivalent, that is, the equivalent classes of non-abelian 3-cocycles corresponding to
 a non-abelian extension
induced by a section are independent on the choice of sections.
 \end{lem}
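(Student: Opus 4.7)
The plan is to exploit the fact that $p s_1 = p s_2 = I_{\mathfrak g}$ to produce the desired equivalence explicitly. Since $p(s_2(x) - s_1(x)) = 0$ for every $x \in \mathfrak g$, identifying $\mathfrak h$ with $i(\mathfrak h) = \ker p$, the assignment $\varphi(x) := s_2(x) - s_1(x)$ defines a linear map $\varphi \colon \mathfrak g \to \mathfrak h$. The goal is then to show that this particular $\varphi$ realizes the equivalence of $(\omega_1,\theta_1,\rho_1)$ and $(\omega_2,\theta_2,\rho_2)$ in the sense of (\ref{E2}), (\ref{E4}), (\ref{E6}), with the cocycles defined from the two sections via (\ref{C1})--(\ref{C3}).

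The verifications of (\ref{E6}) and (\ref{E4}) are short. For (\ref{E6}), substitute $s_2(x) = s_1(x) + \varphi(x)$ into $\rho_i(x)(a,b) = [s_i(x),a,b]_{\hat{\mathfrak g}}$ and expand by trilinearity; the difference equals $-[\varphi(x),a,b]_{\mathfrak h}$, which is exactly $[a,\varphi(x),b]_{\mathfrak h}$ by (\ref{DLts4}) inside $\mathfrak h$. For (\ref{E4}), substitute the same decomposition into both slots of $\theta_i(x,y)a = [a,s_i(x),s_i(y)]_{\hat{\mathfrak g}}$; three extra brackets appear (one with $\varphi(x)$, one with $\varphi(y)$, one with both), and after rewriting them with the help of (\ref{DLts4}), (\ref{C2}), and (\ref{C3}) they collect into precisely $\rho_2(x)(a,\varphi(y)) - D_{\rho_2}(y)(a,\varphi(x)) + [a,\varphi(x),\varphi(y)]_{\mathfrak h}$.

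The main obstacle is (\ref{E2}). Writing $s_1 = s_2 - \varphi$ and expanding $[s_1(x), s_1(y), s_1(z)]_{\hat{\mathfrak g}}$ by trilinearity yields eight contributions: the term $[s_2(x),s_2(y),s_2(z)]_{\hat{\mathfrak g}}$ (which is absorbed into $-\omega_2$), three single-$\varphi$ brackets, three double-$\varphi$ brackets, and the triple-$\varphi$ bracket $-[\varphi(x),\varphi(y),\varphi(z)]_{\hat{\mathfrak g}}$, which lives in $\mathfrak h$ since $\mathfrak h = \ker p$ is closed under the bracket of $\hat{\mathfrak g}$. Each mixed bracket is translated into a $\theta_2$, $D_{\theta_2}$, $\rho_2$ or $D_{\rho_2}$ contribution using (\ref{C2})--(\ref{C3}), with signs supplied by (\ref{DLts4}); combining these with the remaining contribution $s_2[x,y,z]_{\mathfrak g} - s_1[x,y,z]_{\mathfrak g} = \varphi([x,y,z]_{\mathfrak g})$ reproduces the right hand side of (\ref{E2}) term by term. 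No Lie-triple identities beyond trilinearity, (\ref{DLts4}), and the closure of $\mathfrak h$ in $\hat{\mathfrak g}$ are invoked; the sole difficulty is combinatorial sign bookkeeping. Since (\ref{E2}), (\ref{E4}), (\ref{E6}) all hold for this choice of $\varphi$, the two non-abelian 3-cocycles are equivalent, and the lemma follows.
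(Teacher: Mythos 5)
Your proposal is correct and follows essentially the same route as the paper: both define $\varphi(x)=s_2(x)-s_1(x)$ (well-defined since $p\varphi=0$), substitute $s_1=s_2-\varphi$ into the defining formulas (\ref{C1})--(\ref{C3}), and expand by trilinearity, using (\ref{DLts4}) to match each mixed bracket with the corresponding $\theta_2$, $D_{\theta_2}$, $\rho_2$, $D_{\rho_2}$ term in (\ref{E2}), (\ref{E4}), (\ref{E6}). The paper writes out the verification of (\ref{E2}) in full and dismisses (\ref{E4}) and (\ref{E6}) as similar, exactly as you do in reverse order of emphasis.
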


\begin{proof}
Let $\hat{\mathfrak g}$ be a non-abelian extension
of $\mathfrak g$ by $\mathfrak h$. Assume that $s_1$ and $s_2$ are two
different sections of $p$, $(\omega_1,\theta_1,\rho_1)$ and
$(\omega_2,\theta_2,\rho_2)$ are the corresponding non-abelian
3-cocycles. Define a linear map $\varphi:\mathfrak
g\longrightarrow\mathfrak h$ by $\varphi(x)=s_2(x)-s_1(x)$. Since
$p\varphi(x)=ps_2(x)-ps_1(x)=0$, $\varphi$ is well defined. Thanks to
Eqs.~(\ref{C1})-(\ref{C3}), we get
\begin{eqnarray*}&&w_1(x,y,z)=[s_1(x),s_1(y),s_1(z)]_{\hat{\mathfrak g}}-s_{1}[x,y,z]_{\mathfrak g}\\&=&
[s_2(x)-\varphi(x),s_2(y)-\varphi(y),s_2(z)-\varphi(z)]_{\hat{\mathfrak
g}}-(s_{2}[x,y,z]_{\mathfrak g}-\varphi([x,y,z]_{\mathfrak g})
\\&=&[s_2(x),s_2(y),s_2(z)]_{\hat{\mathfrak g}}-[s_2(x),\varphi(y),s_2(z)]_{\hat{\mathfrak
g}}-[s_2(x),s_2(y),\varphi(z)]_{\hat{\mathfrak
g}}+[s_2(x),\varphi(y),\varphi(z)]_{\hat{\mathfrak
g}}\\&&-[\varphi(x),s_2(y),s_2(z)]_{\hat{\mathfrak
g}}+[\varphi(x),\varphi(y),s_2(z)]_{\hat{\mathfrak
g}}+[\varphi(x),s_2(y),\varphi(z)]_{\hat{\mathfrak
g}}-[\varphi(x),\varphi(y),\varphi(z)]_{\hat{\mathfrak
g}}\\&&-s_2[x,y,z]_{\mathfrak g}+\varphi([x,y,z]_{\mathfrak g})
\\&=&w_2(x,y,z)+\theta_2(x,z)\varphi(y)-D_{\theta_2}(x,y)\varphi(z)
+\rho_2(x)(\varphi(y),\varphi(z))-\theta_2(y,z)\varphi(x)+D_{\rho_2}(z)(\varphi(x),\varphi(y))
\\&&-\rho_2(y)(\varphi(x),\varphi(z)) -[\varphi(x),\varphi(y),\varphi(z)]_{\hat{\mathfrak
g}}+\varphi[x,y,z]_{\mathfrak
g},\end{eqnarray*} which yields that
 Eq.~(\ref{E2}) holds. Similarly, Eqs.~(\ref{E4}) and (\ref{E6})
 hold. This finishes the proof.
\end{proof}

According to Proposition \ref{LY} and Proposition \ref{CY}, given a non-abelian extension
 $\mathcal{E}:0\longrightarrow\mathfrak h\stackrel{i}{\longrightarrow} \hat{\mathfrak g}\stackrel{p}{\longrightarrow}\mathfrak g\longrightarrow0$
of $\mathfrak g$ by
$\mathfrak h$ with a section $s$ of $p$, we have a non-abelian 3-cocycle
 $(\omega_{s},\theta_{s},\rho_{s})$ and a Lie triple system $\mathfrak g\oplus_{(\omega_{s},\theta_{s},\rho_{s})} \mathfrak h$, which yields
that
$$\mathcal{E}_{(\omega_{s},\theta_{s},\rho_{s})}:0\longrightarrow\mathfrak h\stackrel{i}{\longrightarrow} \mathfrak g\oplus_{(\omega_{s},\theta_{s},\rho_{s})} \mathfrak h\stackrel{\pi}{\longrightarrow}\mathfrak g\longrightarrow0$$ is a non-abelian extension of $\mathfrak g$ by $\mathfrak h$. Since any element
$\hat{w}\in \hat{\mathfrak g}$ can be written as $\hat{w}=a+s(x)$ with $a\in \mathfrak h,x\in \mathfrak g$,
define a linear map
\begin{equation*} f:\hat{\mathfrak g}\longrightarrow \mathfrak g\oplus_{(\omega_{s},\theta_{s},\rho_{s})} \mathfrak h,~f(\hat{w})=f(a+s(x))=a+x.\end{equation*}
It is easy to check that $f$ is a homomorphism of Lie triple systems such that
the following commutative diagram holds:
 \begin{equation*} \xymatrix{
  \mathcal{E}:0 \ar[r] & \mathfrak h\ar@{=}[d] \ar[r]^-{i} & \hat{\mathfrak g}\ar[d]_-{f} \ar[r]^-{p} & \mathfrak g \ar@{=}[d] \ar[r] & 0\\
 \mathcal{E}_{(\omega_s)}:0 \ar[r] & \mathfrak h \ar[r]^-{i} & \mathfrak g\oplus_{(\omega_{s},\theta_{s},\rho_{s})} \mathfrak h \ar[r]^-{\pi} & \mathfrak g  \ar[r] & 0,}\end{equation*}
 which indicates that the non-abelian extensions $\mathcal{E}$ and $\mathcal{E}_{(\omega_{s},\theta_{s},\rho_{s})}$ of $\mathfrak g$ by
$\mathfrak h$ are equivalent. On the other hand, if $(\omega,\theta,\rho)$
 is a non-abelian 3-cocycle on $\mathfrak g$ with values in
 $\mathfrak h$, there is a Lie triple system $\mathfrak g\oplus_{(\omega,\theta,\rho)} \mathfrak h$, which yields the following
 non-abelian extension of $\mathfrak g$ by $\mathfrak h$:
 \begin{equation*}\mathcal{E}_{(\omega,\theta,\rho)}:0\longrightarrow\mathfrak h\stackrel{i}{\longrightarrow}\mathfrak g\oplus_{(\omega,\theta,\rho)} \mathfrak h
\stackrel{\pi}{\longrightarrow}\mathfrak g\longrightarrow0,\end{equation*}
where $i$ is the inclusion and $\pi$ is the projection.

In the sequel, we characterize the relationship between non-abelian cohomology groups and non-abelian extensions in the following.

\begin{pro}
 Let $\mathfrak g$ and $\mathfrak h$ be two Lie triple systems.
 Then the equivalent classes of non-abelian extensions of $\mathfrak g$ by $\mathfrak h$
are classified by the non-abelian cohomology group, that is,
 $\mathcal{E}_{nab}(\mathfrak g,\mathfrak h)\simeq H_{nab}^{3}(\mathfrak g,\mathfrak h)$.
 \end{pro}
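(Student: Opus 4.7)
The plan is to construct mutually inverse maps between the two sets and check each is well-defined on equivalence classes. Define
\[
\Phi:\mathcal{E}_{nab}(\mathfrak g,\mathfrak h)\longrightarrow H_{nab}^{3}(\mathfrak g,\mathfrak h)
\]
by sending a non-abelian extension $\mathcal{E}$ to the class $[(\omega_s,\theta_s,\rho_s)]$, where $s$ is any section of $p$ and $(\omega_s,\theta_s,\rho_s)$ is the triple defined by (\ref{C1})--(\ref{C3}). By Proposition \ref{CY} this triple is a genuine non-abelian $3$-cocycle, and by Lemma \ref{Le1} its equivalence class does not depend on the choice of $s$. Conversely, define
\[
\Psi:H_{nab}^{3}(\mathfrak g,\mathfrak h)\longrightarrow \mathcal{E}_{nab}(\mathfrak g,\mathfrak h)
\]
by sending $[(\omega,\theta,\rho)]$ to the equivalence class of $\mathcal{E}_{(\omega,\theta,\rho)}$, which is a Lie triple system by Proposition \ref{LY} and hence a non-abelian extension of $\mathfrak g$ by $\mathfrak h$ with the obvious inclusion $i$ and projection $\pi$.

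Next I would check that both maps descend to the equivalence classes. For $\Phi$: if $\mathcal{E}_1$ and $\mathcal{E}_2$ are equivalent via $f:\hat{\mathfrak g}_1\to \hat{\mathfrak g}_2$ as in (\ref{Ene1}), choose a section $s_1$ of $p_1$; then $s_2:=f\circ s_1$ is a section of $p_2$ because $p_2\circ f\circ s_1=p_1\circ s_1=I_{\mathfrak g}$. Since $f$ is a homomorphism of Lie triple systems and equals the identity on $\mathfrak h$, one sees directly from (\ref{C1})--(\ref{C3}) that $(\omega_{s_1},\theta_{s_1},\rho_{s_1})=(\omega_{s_2},\theta_{s_2},\rho_{s_2})$, so the classes coincide. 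For $\Psi$: if $(\omega_1,\theta_1,\rho_1)$ and $(\omega_2,\theta_2,\rho_2)$ are equivalent via $\varphi:\mathfrak g\to\mathfrak h$ satisfying (\ref{E2}), (\ref{E4}), (\ref{E6}), define
\[
f:\mathfrak g\oplus_{(\omega_1,\theta_1,\rho_1)}\mathfrak h\longrightarrow \mathfrak g\oplus_{(\omega_2,\theta_2,\rho_2)}\mathfrak h,\qquad f(x+a)=x+a+\varphi(x).
\]
This is clearly linear, restricts to the identity on $\mathfrak h$, and commutes with the projections to $\mathfrak g$. The bulk of the argument is then checking that $f$ intertwines the two brackets $[\,\cdot,\cdot,\cdot\,]_{(\omega_i,\theta_i,\rho_i)}$ given by (\ref{NLts}); expanding both sides and cancelling, the identity reduces precisely to the cocycle-equivalence relations (\ref{E2}), (\ref{E4}), (\ref{E6}) together with their consequences (\ref{E3}), (\ref{E5}). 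Hence $\mathcal{E}_{(\omega_1,\theta_1,\rho_1)}\sim\mathcal{E}_{(\omega_2,\theta_2,\rho_2)}$ and $\Psi$ is well-defined.

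Finally I would verify $\Phi\circ\Psi=\mathrm{Id}$ and $\Psi\circ\Phi=\mathrm{Id}$. For $\Phi\circ\Psi$: in the extension $\mathcal{E}_{(\omega,\theta,\rho)}$ take the canonical section $s(x)=x$; formulas (\ref{NLts}) and (\ref{C1})--(\ref{C3}) immediately give back $(\omega,\theta,\rho)$. For $\Psi\circ\Phi$: given an extension $\mathcal{E}$ with section $s$, the map $f:\hat{\mathfrak g}\to \mathfrak g\oplus_{(\omega_s,\theta_s,\rho_s)}\mathfrak h$ defined by $f(a+s(x))=a+x$ is the equivalence of extensions already written out in the excerpt just before the proposition.

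The main obstacle is the well-definedness of $\Psi$: showing that $f(x+a)=x+a+\varphi(x)$ is a Lie triple system homomorphism between $\mathfrak g\oplus_{(\omega_1,\theta_1,\rho_1)}\mathfrak h$ and $\mathfrak g\oplus_{(\omega_2,\theta_2,\rho_2)}\mathfrak h$ is a long but mechanical expansion using (\ref{NLts}), where every term that is not visibly equal on the two sides is matched by an application of one of (\ref{E2})--(\ref{E5}). Once this is in place, the remaining verifications are formal.
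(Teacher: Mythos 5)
Your argument is essentially the paper's: the paper bundles your $\Phi$ and $\Psi$ into a single map $\Theta$ and proves it is well defined, injective and surjective, where your well-definedness of $\Phi$ is the paper's well-definedness of $\Theta$, your well-definedness of $\Psi$ is exactly the paper's injectivity argument, and your verification of $\Psi\circ\Phi=\mathrm{Id}$ is the discussion preceding the proposition. One concrete correction is needed, though: with the paper's sign conventions in (\ref{E2}), (\ref{E4}), (\ref{E6}) (which come from taking $\varphi=s_2-s_1$ in Lemma \ref{Le1}), the intertwining map from $\mathfrak g\oplus_{(\omega_1,\theta_1,\rho_1)}\mathfrak h$ to $\mathfrak g\oplus_{(\omega_2,\theta_2,\rho_2)}\mathfrak h$ must be $f(x+a)=x-\varphi(x)+a$, not $x+a+\varphi(x)$. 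Indeed, by (\ref{NLts}) one has $[x,b,c]_{(\omega_1,\theta_1,\rho_1)}=\rho_1(x)(b,c)$ for $x\in\mathfrak g$, $b,c\in\mathfrak h$, which $f$ sends to itself, whereas $[x+\varphi(x),b,c]_{(\omega_2,\theta_2,\rho_2)}=\rho_2(x)(b,c)+[\varphi(x),b,c]_{\mathfrak h}=\rho_2(x)(b,c)-[b,\varphi(x),c]_{\mathfrak h}$; equating the two would force $\rho_1(x)(b,c)-\rho_2(x)(b,c)=-[b,\varphi(x),c]_{\mathfrak h}$, which is the negative of (\ref{E6}). Your formula with $+\varphi(x)$ is in fact the correct equivalence in the opposite direction (from the $(\omega_2,\theta_2,\rho_2)$-extension to the $(\omega_1,\theta_1,\rho_1)$-extension), so the conclusion that the two extensions are equivalent survives; but the mechanical expansion as you set it up would not close without flipping this sign.
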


 \begin{proof}
 Define a linear map
 \begin{equation*}\Theta:\mathcal{E}_{nab}(\mathfrak g,\mathfrak h)\rightarrow H_{nab}^{3}(\mathfrak g,\mathfrak h),~\end{equation*}
where $\Theta$ assigns an equivalent class of non-abelian extensions to the class of corresponding non-abelian 3-cocycles.
First, we check that $\Theta$ is well-defined.
 Assume that $\mathcal{E}_1$ and $\mathcal{E}_2$ are two equivalent non-abelian extensions of $\mathfrak g$ by $\mathfrak h$ via the map
 $f$, that is, the commutative diagram (\ref{Ene1}) holds. Let $s_1:\mathfrak g\rightarrow \hat{\mathfrak g}_1$ be a
section of $p_1$. Then $p_2fs_1=p_1s_1=I_{\mathfrak g}$, which
follows that $s_2=fs_1$ is a section of $p_2$. Let
 $(\omega_1,\theta_1,\rho_1)$ and
$(\omega_2,\theta_2,\rho_2)$ be two non-abelian 3-cocycles induced
by the sections $s_1,s_2$ respectively. Then we have,
\begin{eqnarray*}\theta_1(x,y)a&=&f(\theta_1(x,y)a)=f([a,s_1(x),s_1(y)]_{\hat{\mathfrak g}_1})
\\&=&[f(a),fs_1(x),fs_1(y)]_{\hat{\mathfrak g}_2}\\&=&[a,s_2(x),s_2(y)]_{\hat{\mathfrak
g}_2}\\&=&\theta_2(x,y)a .\end{eqnarray*} By the same token, we have
\begin{equation*}\omega_1(x,y,z)=\omega_2(x,y,z),~~\rho_1(x)(a,b)=\rho_2(x)(a,b).\end{equation*}
Thus,
$(\omega_1,\theta_1,\rho_1)=(\omega_2,\theta_2,\rho_2)$,
which means that $\Theta$ is well-defined.

 Next, we verify that $\Theta$ is injective. Indeed, suppose that
 $\Theta([\mathcal{E}_1])=[(\omega_1,\theta_1,\rho_1)]$ and $\Theta([\mathcal{E}_2])=[(\omega_2,\theta_2,\rho_2)]$. If
the equivalent classes $[(\omega_1,\theta_1,\rho_1)]=[(\omega_2,\theta_2,\rho_2)]$, we obtain that the non-abelian 3-cocycles
 $(\omega_1,\theta_1,\rho_1)$ and
$(\omega_2,\theta_2,\rho_2)$ are equivalent
via the linear map $\varphi:\mathfrak g\longrightarrow
\mathfrak h$, satisfying Eqs.~~(\ref{E2})-(\ref{E6}). Define a linear map
$f:\mathfrak g\oplus_{(\omega_1,\theta_1,\rho_1)} \mathfrak h\longrightarrow \mathfrak g\oplus_{(\omega_2,\theta_2,\rho_2)}
\mathfrak h$ by
\begin{equation*}f(x+a)=x-\varphi(x)+a,~~\forall~x\in \mathfrak g,a\in \mathfrak h.\end{equation*}
 According to Eq.~~(\ref{NLts}), for all $x,y,z\in \mathfrak
g,a,b,c\in \mathfrak h$, we get
\begin{eqnarray*}&&f([x+a,y+b,z+c]_{(\omega_1,\theta_1,\rho_1)})
\\&=&f([x,y,z]_{\mathfrak g}+\omega_1(x,y,z)+D_{\theta_1}(x,y)c+\theta_1(y,z)a-\theta_1(x,z)b
\\&&+D_{\rho_1}(z)(a,b)+\rho_1(x)(b,c)-\rho_1(y)(a,c)+[a,b,c]_{\mathfrak h})
\\&=&[x,y,z]_{\mathfrak g}-\varphi([x,y,z]_{\mathfrak g})+\omega_1(x,y,z)+D_{\theta_1}(x,y)c+\theta_1(y,z)a-\theta_1(x,z)b
\\&&+D_{\rho_1}(z)(a,b)+\rho_1(x)(b,c)-\rho_1(y)(a,c)+[a,b,c]_{\mathfrak h},\end{eqnarray*}
and
\begin{eqnarray*}&&[f(x+a),f(y+b),f(z+c)]_{(\omega_2,\theta_2,\rho_2)}
\\&=&[x-\varphi(x)+a,y-\varphi(y)+b,z-\varphi(z)+c]_{(\omega_2,\theta_2,\rho_2)}
\\&=&[x,y,z]_{\mathfrak g}+\omega_2(x,y,z)+D_{\theta_2}(x,y)(c-\varphi(z))+\theta_2(y,z)(a-\varphi(x))-\theta_2(x,z)(b-\varphi(y))
\\&&+T_{2}(z)(a-\varphi(x),b-\varphi(y))+\rho_2(x)(b-\varphi(y),c-\varphi(z))-\rho_2(y)(a-\varphi(x),c-\varphi(z))
\\&&+[a-\varphi(x),b-\varphi(y),c-\varphi(z)]_{\mathfrak h})
\\&=&[x,y,z]_{\mathfrak g}+\omega_2(x,y,z)+D_{\theta_2}(x,y)(c-\varphi(z))+\theta_2(y,z)(a-\varphi(x))-\theta_2(x,z)(b-\varphi(y))
\\&&+D_{\rho_2}(z)(\varphi(x),\varphi(y))-D_{\rho_2}(z)(a,\varphi(y))-D_{\rho_2}(z)(\varphi(x),b)+D_{\rho_2}(z)(a,b)
\\&&+\rho_2(x)(\varphi(y),\varphi(z))-\rho_2(x)(\varphi(y),c)-\rho_2(x)(b,\varphi(z))+\rho_2(x)(b,c)
-\rho_2(y)(\varphi(x),\varphi(z))\\&&+\rho_2(y)(\varphi(x),c)+\rho_2(y)(a,\varphi(z))-\rho_2(y)(a,c)
-[\varphi(x),\varphi(y),\varphi(z)]_{\mathfrak h}+[\varphi(x),\varphi(y),c]_{\mathfrak h}\\&&+[\varphi(x),b,\varphi(z)]_{\mathfrak h}
-[\varphi(x),b,c]_{\mathfrak h}+[a,\varphi(y),\varphi(z)]_{\mathfrak h}-[a,b,\varphi(z)]_{\mathfrak h}-[a,\varphi(y),c]_{\mathfrak h}
+[a,b,c]_{\mathfrak h}
.\end{eqnarray*}
In view of Eqs.~(\ref{E2})-(\ref{E5}), we have
\begin{equation*}f([x+a,y+b,z+c]_{(\omega_1,\theta_1,\rho_1)})=[f(x+a),f(y+b),f(z+c)]_{(\omega_2,\theta_2,\rho_2)}.\end{equation*}
Hence, $f$ is a homomorphism of Lie triple systems. Clearly, the
following commutative diagram holds:
\begin{equation}
\xymatrix{
 \mathcal{E}_{(\omega_1,\theta_1,\rho_1)}: 0 \ar[r] & \mathfrak h\ar@{=}[d] \ar[r]^-{i} & \mathfrak g\oplus_{(\omega_1,\theta_1,\rho_1)} \mathfrak h \ar[d]_-{f} \ar[r]^-{\pi} & \mathfrak g \ar@{=}[d] \ar[r] & 0\\
\mathcal{E}_{(\omega_2,\theta_2,\rho_2)}: 0 \ar[r] & \mathfrak h \ar[r]^-{i} & \mathfrak g\oplus_{(\omega_2,\theta_2,\rho_2)} \mathfrak h \ar[r]^-{\pi} & \mathfrak g  \ar[r] & 0
 .}\end{equation}
Thus $\mathcal{E}_{(\omega_1,\theta_1,\rho_1)}$ and $\mathcal{E}_{(\omega_2,\theta_2,\rho_2)}$ are equivalent
non-abelian extensions of $\mathfrak g$ by $\mathfrak h$,
which means that $[\mathcal{E}_{(\omega_1,\theta_1,\rho_1)}]=[\mathcal{E}_{(\omega_2,\theta_2,\rho_2)}]$. Thus, $\Theta$ is injective.

 Finally, we claim that $\Theta$ is surjective.
 For any equivalent class of non-abelian 3-cocycles $[(\omega)]$, by Proposition \ref{LY}, there is
 a non-abelian extension of $\mathfrak g$ by $\mathfrak h$:
   \begin{equation*}\mathcal{E}_{(\omega)}:0\longrightarrow\mathfrak h\stackrel{i}{\longrightarrow} \mathfrak g\oplus_{(\omega,\theta,\rho)} \mathfrak h\stackrel{\pi}{\longrightarrow}\mathfrak g\longrightarrow0.\end{equation*}
   Therefore, $\Theta([\mathcal{E}_{(\omega,\theta,\rho)}])=[(\omega,\theta,\rho)]$, which follows that $\Theta$ is surjective.
   In all, $\Theta$ is bijective. This finishes the proof.

 \end{proof}

\section{Non-abelian extensions in terms of Maurer Cartan elements}
 In this section, we classify the non-abelian extensions using Maurer Cartan
 elements. We start with recalling the Maurer-Cartan elements.

Let $(L=\oplus_{i}L_{i},[ \ , \ ], d)$ be a differential graded Lie
algebra. The set $\mathrm{MC}(L)$ of Maurer-Cartan elements of $(L,[ \ , \ ],
d)$ is defined by
$$\mathrm{MC}(L)=\{\eta\in L_1|d \eta+\frac{1}{2}[\eta,\eta]=0\}.$$
Moreover, $\eta_0,\eta_1\in \mathrm{MC}(L)$ are called gauge equivalent if
and only if there exists an element $\varphi\in L_{0}$ such that
$$\eta_1=e^{ad_{\varphi}}\eta_{0}-\frac{e^{ad_{\varphi}}-1}{ad_{\varphi}}d\varphi.$$

Let $\mathfrak g$ be a vector space. Consider the graded vector
space $C^{*}(\mathfrak g,\mathfrak g)=\oplus_{n\geq
0}C^{n}(\mathfrak g,\mathfrak g)$, where $C^{n}(\mathfrak
g,\mathfrak g)$ is the set of linear maps $f\in
\mathrm{Hom}(L\otimes\cdot\cdot\cdot\otimes L\otimes\mathfrak g,\mathfrak
g)$, where $L=\mathfrak g\otimes \mathfrak g$, satisfying
Eqs.~~(\ref{CO1}) and (\ref{CO2}) for all $X_{i}\in
L~(i=1,2,\cdot\cdot\cdot, n-1),x,y,z\in \mathfrak g$. The degree of
elements in $C^{n}(\mathfrak g,\mathfrak g)$ is $n$. Define
\begin{equation}\label{MC1}[f,g]_{Lts}=(-1)^{mn}i_{f}(g)-i_{g}(f),
~~\forall~f\in C^{n}(\mathfrak g,\mathfrak g),~g\in C^{m}(\mathfrak g,\mathfrak g),\end{equation}
where $i_{f}(g)\in C^{n+m}(\mathfrak g,\mathfrak g)$ is given by
\begin{equation}\label{MC2}i_{f}(g)=\sum_{k=1}^{n+1}(-1)^{m(k-1)}\sum_{\sigma\in sh(k-1,m)}(-1)^{\sigma}f\circ_{k}^{\sigma}g,\end{equation}
where $\sigma$ is a permutation in $(k-1,m)$-shuffle and
$f\circ_{k}^{\sigma}g$ is defined for $k=n+1$ by
\begin{equation}\label{MC3}f\circ_{n+1}^{\sigma}g(X_1,X_2,\cdot\cdot\cdot,X_{n+m},z)
=f(X_{\sigma(1)},\cdot\cdot\cdot,X_{\sigma(n)},g(X_{\sigma(n+1)},\cdot\cdot\cdot,X_{\sigma(n+m)},z))\end{equation}
and for $1\leq k\leq n$ by
\begin{align}\label{MC4}&f\circ_{k}^{\sigma}g(X_1,\cdot\cdot\cdot,X_{n+m},z)
\nonumber
\\=&f(X_{\sigma(1)},\cdot\cdot\cdot,X_{\sigma(k-1)},g(X_{\sigma(k)},\cdot\cdot\cdot,X_{\sigma(n+m-1)},x_{k+m})
,y_{k+m},X_{k+m+1},\cdot\cdot\cdot,X_{m+n},z)\nonumber\\&
+f(X_{\sigma(1)},\cdot\cdot\cdot,X_{\sigma(k-1)},x_{k+m},g(X_{\sigma(k)},\cdot\cdot\cdot,X_{\sigma(k+m-1)},y_{k+m}),X_{k+m+1},\cdot\cdot\cdot,X_{m+n},z)
\end{align}
for all $X_{i}\in L~(i=1,2,\cdot\cdot\cdot, n+m)$ and $z\in
\mathfrak g$.

\begin{pro} \cite{20} \label{pro:Dga1} With the above notations, $(C^{*}(\mathfrak g,\mathfrak g),[ \ , \
]_{Lts})$ is a graded Lie algebra. $\pi\in C^{1}(\mathfrak
g,\mathfrak g)=\mathrm{Hom}(\wedge^{2}\mathfrak g\otimes \mathfrak
g,\mathfrak g)$ defines a Lie triple system structure on $\mathfrak
g$ if and only if $[ \ , \ ]_{Lts}=0$, i.e, $\pi$ is a
Maurer-Cartan element of the graded Lie algebra $(C^{*}(\mathfrak
g,\mathfrak g),[ \ , \ ]_{Lts})$. Furthermore, $(C^{*}(\mathfrak
g,\mathfrak g),[ \ , \ ]_{Lts},d_{\pi})$ is a differential graded
Lie algebra, where $d_{\pi}$ is given by
\begin{equation}\label{MC5}d_{\pi}(f)=(-1)^{n-1}[\pi,f]_{Lts},~~\forall~f\in C^{n-1}(\mathfrak
g,\mathfrak g).\end{equation}
\end{pro}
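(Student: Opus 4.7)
The plan is to establish the three parts of the proposition in sequence: the graded Lie algebra structure, the Maurer–Cartan characterization of Lie triple systems, and the differential graded Lie algebra enrichment after twisting by $\pi$.

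First I would verify that $[\cdot,\cdot]_{Lts}$ is a graded Lie bracket. Graded antisymmetry is immediate from the definition \eqref{MC1}: a direct computation gives $[f,g]_{Lts} + (-1)^{mn}[g,f]_{Lts} = (-1)^{mn}i_f(g) - i_g(f) + (-1)^{mn}i_g(f) \cdot (-1)^{mn} - (-1)^{mn}i_f(g) = 0$. For graded Jacobi, the strategy is the Nijenhuis–Richardson pattern: show that the insertion operation $i_{(-)}(-)$ satisfies a graded pre-Lie-type identity
\[
i_f(i_g(h)) - i_{i_f(g)}(h) = (-1)^{mn}\bigl(i_g(i_f(h)) - i_{i_g(f)}(h)\bigr),
\]
from which the graded Jacobi identity for the commutator-like bracket follows formally. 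The content is bookkeeping: one expands $i_{f}\circ_k^{\sigma}(i_g\circ_\ell^{\tau}h)$ according to whether the inner insertion lands in a slot of $L=\g\otimes\g$ that lies inside or outside the block already occupied by $g$, and matches the two sides via a rearrangement of the $(k-1,\ell-1,\cdot)$-shuffle as an iterated shuffle. The two cases in \eqref{MC4} (insertion into either $x_{k+m}$ or $y_{k+m}$) are exactly what is needed so that every nested insertion produced on one side appears on the other.

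Second, I would unpack the Maurer–Cartan equation for $\pi\in C^{1}(\g,\g)=\mathrm{Hom}(\wedge^{2}\g\otimes\g,\g)$. Since $|\pi|=1$, $[\pi,\pi]_{Lts}=-2\,i_{\pi}(\pi)$, and $i_{\pi}(\pi)$ lives in $C^{2}(\g,\g)$, so it is evaluated on $(X_1,X_2,z)$ with $X_i=(x_i,y_i)\in\g\otimes\g$. Writing out the two values $k=1,2$ in \eqref{MC2}–\eqref{MC4}, the $k=2$ term produces $\pi(X_1,\pi(X_2,z))$ while the $k=1$ term, after summing over $(0,1)$-shuffles and using the double insertion into the $x$- and $y$-slots, produces $\pi(\pi(X_1,x_2),y_2,z)+\pi(x_2,\pi(X_1,y_2),z)+\pi(X_1,\pi(X_2,z))$ with appropriate signs. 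Identifying these with $[x_1,y_1,[x_2,y_2,z]]$, $[[x_1,y_1,x_2],y_2,z]$, $[x_2,[x_1,y_1,y_2],z]$ and the third nested term and imposing the built-in constraints \eqref{CO1}, \eqref{CO2} (which already encode \eqref{DLts1}–\eqref{DLts2}), the vanishing $i_\pi(\pi)=0$ becomes precisely the fundamental identity \eqref{DLts3}.

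Third, given $[\pi,\pi]_{Lts}=0$, the operator $d_{\pi}=(-1)^{n-1}[\pi,-]_{Lts}$ on $C^{n-1}(\g,\g)$ is automatically a graded derivation of $[\cdot,\cdot]_{Lts}$ by graded Jacobi, and $d_{\pi}^{2}=\pm\tfrac{1}{2}[[\pi,\pi]_{Lts},-]_{Lts}=0$, giving the DGLA structure. I would also check that $d_{\pi}$ preserves the subspace cut out by \eqref{CO1}–\eqref{CO2}, which is routine once one observes that bracketing with $\pi$ symmetrizes correctly in the last three slots. The main obstacle is the first step: carrying out the shuffle–combinatorics bookkeeping that proves graded Jacobi in full generality, since unlike the Lie-algebra case the insertion \eqref{MC4} is a sum over two positions, which doubles the number of terms to match and requires careful tracking of shuffle signs.
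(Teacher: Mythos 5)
The paper does not actually prove this proposition: it is imported wholesale from reference \cite{20} (Chtioui--Hajjaji--Mabrouk--Makhlouf), so there is no internal proof to compare yours against. Judged on its own, your outline is the standard and correct route -- the Nijenhuis--Richardson/Balavoine pattern of exhibiting the insertion operation as a graded pre-Lie product, taking the (signed) commutator to get the graded Lie bracket, identifying $[\pi,\pi]_{Lts}=-2\,i_\pi(\pi)=0$ with the fundamental identity \eqref{DLts3}, and then obtaining the DGLA structure formally from graded Jacobi plus $[\pi,\pi]_{Lts}=0$ (your computation $d_\pi^2=\pm\tfrac12[[\pi,\pi]_{Lts},-]_{Lts}=0$ is the right one). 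Two small points of care if you carry this out: (1) in the Maurer--Cartan unpacking, the nested term $\pi(X_1,\pi(X_2,z))$ belongs to the $k=n+1=2$ bucket of \eqref{MC2}, not the $k=1$ bucket; the $(1,1)$-shuffle sum at $k=2$ contributes $\pi(X_1,\pi(X_2,z))-\pi(X_2,\pi(X_1,z))$, and it is this pair together with the two $k=1$ terms $\pi(\pi(X_1,x_2),y_2,z)+\pi(x_2,\pi(X_1,y_2),z)$ that reassembles into \eqref{DLts3}; (2) you should pin down whether the graded symmetry of the associator of $i_{(-)}(-)$ that you need is the left or right pre-Lie version, since the bracket \eqref{MC1} carries the extra overall sign $(-1)^{mn}$ relative to the usual commutator and the Jacobi verification is sensitive to this convention. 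Neither issue is a gap in the strategy, only in the bookkeeping you already flag as the main burden.
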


Let $(\mathfrak g,\pi_{\mathfrak g})$
and $(\mathfrak h, \pi_{\mathfrak h})$ be two Lie triple systems. Then $(\mathfrak
g\oplus \mathfrak h,\pi_{\mathfrak g\oplus\mathfrak h})$ is a Lie triple system,
where $\pi_{\mathfrak g\oplus\mathfrak h}$ is defined by
\begin{equation*}\pi_{\mathfrak g\oplus\mathfrak h}(x + a, y +
b, z + c) =\pi_{\mathfrak g}(x, y, z) +\pi_{\mathfrak h}(a,b,c)\end{equation*}
 for all $x,y,z\in \mathfrak g,a,b,c\in \mathfrak h$.

In view of Proposition \ref{pro:Dga1}, $(C^{*}(\mathfrak g\oplus
\mathfrak h,\mathfrak g\oplus \mathfrak h),[ \ , \
]_{Lts},d_{\pi_{\mathfrak g\oplus \mathfrak h}})$ is a differential
graded Lie algebra.
Define $C_{>}^{n}(\mathfrak g\oplus \mathfrak
h,\mathfrak h)\subset C^{n}(\mathfrak g\oplus \mathfrak h,\mathfrak
h)$ by
$$C^{n}(\mathfrak g\oplus \mathfrak h,\mathfrak
h)=C_{>}^{n}(\mathfrak g\oplus \mathfrak h,\mathfrak h)\oplus C^{n}(
\mathfrak h,\mathfrak h).$$ Denote by $C_{>}(\mathfrak g\oplus
\mathfrak h,\mathfrak h)=\oplus_{n}C_{>}^{n}(\mathfrak g\oplus
\mathfrak h,\mathfrak h)$.

Similar to the case of $3$-Lie algebras \cite{016}, we have

\begin{pro} \label{pro:Dga2} With the above notations, $(C_{>}(\mathfrak g\oplus
\mathfrak h,\mathfrak h),[ \ , \ ]_{Lts},d_{\pi_{\mathfrak g\oplus \mathfrak h}})$ is a differential graded Lie subalgebra of
$(C^{*}(\mathfrak g\oplus \mathfrak h,\mathfrak g\oplus \mathfrak
h),[ \ , \ ]_{Lts},d_{\pi_{\mathfrak g\oplus \mathfrak h}})$.
Moreover, its degree $0$ part $C_{>}^{0}(\mathfrak g\oplus \mathfrak
h,\mathfrak h)=\mathrm{Hom} (\mathfrak h,\mathfrak h)$ is abelian.
\end{pro}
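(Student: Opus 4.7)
The plan is to verify three claims in succession: (i) $C_{>}(\mathfrak g\oplus\mathfrak h,\mathfrak h)$ is closed under $[\,\cdot\,,\,\cdot\,]_{Lts}$; (ii) it is closed under $d_{\pi_{\mathfrak g\oplus\mathfrak h}}$; and (iii) its degree-zero component is abelian. The conceptual key, used throughout, is to characterize $C_{>}^{n}$ as the subspace of $C^{n}(\mathfrak g\oplus\mathfrak h,\mathfrak h)$ consisting of cochains that vanish whenever all their arguments lie in $\mathfrak h$; this is forced by the direct-sum decomposition $C^{n}(\mathfrak g\oplus\mathfrak h,\mathfrak h)=C_{>}^{n}\oplus C^{n}(\mathfrak h,\mathfrak h)$. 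A second useful ingredient is the diagonal form of the bracket on $\mathfrak g\oplus\mathfrak h$, namely $\pi_{\mathfrak g\oplus\mathfrak h}(x+a,y+b,z+c)=\pi_{\mathfrak g}(x,y,z)+\pi_{\mathfrak h}(a,b,c)$, which eliminates potential cross-terms.

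For step (i), I would take $f\in C_{>}^{n}$ and $g\in C_{>}^{m}$. Since both are $\mathfrak h$-valued, every summand of $i_{f}(g)$ and $i_{g}(f)$ defined by (\ref{MC2})--(\ref{MC4}) is again $\mathfrak h$-valued, so $[f,g]_{Lts}\in C^{n+m}(\mathfrak g\oplus\mathfrak h,\mathfrak h)$. To see that it vanishes on an input $(X_{1},\dots,X_{n+m},z)$ with each $X_{i}\in\mathfrak h^{\otimes 2}$ and $z\in\mathfrak h$, I would check each summand $f\circ_{k}^{\sigma}g$: in every such term the inner map $g$ is evaluated on arguments that remain pure-$\mathfrak h$, hence $g(\cdots)=0$ by hypothesis, and the whole summand vanishes. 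The argument for $i_{g}(f)$ is symmetric.

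For step (ii), I would rewrite $d_{\pi_{\mathfrak g\oplus\mathfrak h}}f=(-1)^{n-1}[\pi_{\mathfrak g\oplus\mathfrak h},f]_{Lts}$ for $f\in C_{>}^{n-1}$. Although $\pi_{\mathfrak g\oplus\mathfrak h}$ itself does not live in $C_{>}$, $\mathfrak h$-valuedness of $[\pi_{\mathfrak g\oplus\mathfrak h},f]$ will follow from the diagonal form: whenever at least one of the three arguments of $\pi_{\mathfrak g\oplus\mathfrak h}$ is replaced by an $\mathfrak h$-valued expression (which is always the case after composing with the $\mathfrak h$-valued $f$), only the $\pi_{\mathfrak h}$-component survives. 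Vanishing on pure-$\mathfrak h$ inputs then proceeds exactly as in step (i), because $\pi_{\mathfrak g\oplus\mathfrak h}$ sends pure-$\mathfrak h$ input to $\mathfrak h$, so in every summand of $i_{\pi_{\mathfrak g\oplus\mathfrak h}}(f)$ and $i_{f}(\pi_{\mathfrak g\oplus\mathfrak h})$ the map $f$ is ultimately applied to pure-$\mathfrak h$ arguments.

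Finally, for step (iii), I would take two degree-zero elements $f,g\in C_{>}^{0}$. With $n=m=0$, formulas (\ref{MC1})--(\ref{MC3}) collapse to $[f,g]_{Lts}=f\circ g-g\circ f$; since each of $f,g$ has image contained in $\mathfrak h$ yet vanishes on $\mathfrak h$ by membership in $C_{>}$, both compositions $f\circ g$ and $g\circ f$ are zero, so the bracket vanishes identically. I expect the main obstacle to be not conceptual but organizational: keeping the shuffle-sum bookkeeping in (i) and (ii) compact and free of index errors. The overall approach closely parallels the $3$-Lie algebra treatment in \cite{016}, which provides a useful template.
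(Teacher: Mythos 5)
Your proposal is correct, and it follows essentially the route the paper intends: the paper itself gives no proof of this proposition, deferring to the analogous statement for $3$-Lie algebras in \cite{016}, and your verification (closure of the $\mathfrak h$-valued, vanishing-on-pure-$\mathfrak h$ cochains under $i_f(g)$ and under bracketing with the diagonal element $\pi_{\mathfrak g\oplus\mathfrak h}$, then the collapse of the degree-zero bracket to a commutator that vanishes because the image lands where the maps are zero) is exactly the standard argument that reference supplies. One point worth recording: your degree-zero argument actually establishes abelianness of $C_{>}^{0}(\mathfrak g\oplus\mathfrak h,\mathfrak h)=\mathrm{Hom}(\mathfrak g,\mathfrak h)$ (maps vanishing on $\mathfrak h$ with values in $\mathfrak h$), which is the correct reading; the identification $C_{>}^{0}(\mathfrak g\oplus\mathfrak h,\mathfrak h)=\mathrm{Hom}(\mathfrak h,\mathfrak h)$ printed in the statement is a typo (that space is the complement $C^{0}(\mathfrak h,\mathfrak h)$, on which the bracket is the usual commutator and is not abelian), and the later gauge-equivalence computation in the paper indeed takes $\varphi\in\mathrm{Hom}(\mathfrak g,\mathfrak h)$ as the degree-zero element.
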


\begin{pro} The following conditions are equivalent:
\begin{enumerate}
    \item[(i)] $(\mathfrak g\oplus \mathfrak
h, [ \ , \ , \ ]_{(\omega,\theta,\rho)} )$ is a Lie triple system,
which is a non-abelian extension of $\mathfrak g$ by $\mathfrak h$;
    \item[(ii)] $\bar{\omega}$ is a Maurer-Cartan element of the differential graded Lie algebra $(C_{>}(\mathfrak g\oplus \mathfrak
h, \mathfrak h), [ \ ,  \ ]_{Lts},d_{\pi_{\mathfrak g\oplus \mathfrak h}})$, where
\begin{align*}\bar{\omega}(x+a,y+b,z+c)=&\omega(x,y,z)+D_{\theta}(x,y)c+\theta(y,z)a-\theta(x,z)b
\\&+D_{\rho}(z)(a,b)+\rho(x)(b,c)-\rho(y)(a,c),\end{align*}
for all $x,y,z\in \mathfrak g,a,b,c\in \mathfrak h$.
\end{enumerate}
\end{pro}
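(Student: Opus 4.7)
The plan is to reduce the equivalence to the general Maurer--Cartan twisting principle for the graded Lie algebra $(C^{*}(\mathfrak g\oplus\mathfrak h,\mathfrak g\oplus\mathfrak h),[\ ,\ ]_{Lts})$ of Proposition~\ref{pro:Dga1}, combined with Proposition~\ref{LY}. The core observation is that the twisted bracket $[\ ,\ ,\ ]_{(\omega,\theta,\rho)}$ on $\mathfrak g\oplus\mathfrak h$ defined in (\ref{NLts}) agrees, as an element of $C^{1}(\mathfrak g\oplus\mathfrak h,\mathfrak g\oplus\mathfrak h)$, with the sum $\pi_{\mathfrak g\oplus\mathfrak h}+\bar{\omega}$: indeed $\pi_{\mathfrak g\oplus\mathfrak h}$ captures the diagonal pieces $[x,y,z]_{\mathfrak g}$ and $[a,b,c]_{\mathfrak h}$, while $\bar{\omega}$ records all mixed terms together with the cocycle defect $\omega(x,y,z)$. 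So condition (i) is equivalent to $\pi_{\mathfrak g\oplus\mathfrak h}+\bar{\omega}$ being a Maurer--Cartan element of $(C^{*}(\mathfrak g\oplus\mathfrak h,\mathfrak g\oplus\mathfrak h),[\ ,\ ]_{Lts})$.

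Next I will expand the Maurer--Cartan equation for $\pi_{\mathfrak g\oplus\mathfrak h}+\bar{\omega}$ using bilinearity of the graded bracket:
\begin{equation*}
[\pi_{\mathfrak g\oplus\mathfrak h}+\bar{\omega},\,\pi_{\mathfrak g\oplus\mathfrak h}+\bar{\omega}]_{Lts}
=[\pi_{\mathfrak g\oplus\mathfrak h},\pi_{\mathfrak g\oplus\mathfrak h}]_{Lts}
+2[\pi_{\mathfrak g\oplus\mathfrak h},\bar{\omega}]_{Lts}
+[\bar{\omega},\bar{\omega}]_{Lts}.
\end{equation*}
Since $\pi_{\mathfrak g\oplus\mathfrak h}$ is itself a Maurer--Cartan element (both $\pi_{\mathfrak g}$ and $\pi_{\mathfrak h}$ are Lie triple system structures, and the mixed components of the bracket vanish), the first term is zero. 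By definition (\ref{MC5}) the second term equals $2\,d_{\pi_{\mathfrak g\oplus\mathfrak h}}(\bar{\omega})$ up to a sign coming from the degree of $\bar{\omega}$. Thus the MC equation for $\pi_{\mathfrak g\oplus\mathfrak h}+\bar{\omega}$ collapses to
\begin{equation*}
d_{\pi_{\mathfrak g\oplus\mathfrak h}}\bar{\omega}+\tfrac{1}{2}[\bar{\omega},\bar{\omega}]_{Lts}=0,
\end{equation*}
which is exactly the Maurer--Cartan equation in (ii).

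Finally, one must check that $\bar{\omega}$ lies in the subalgebra $C_{>}(\mathfrak g\oplus\mathfrak h,\mathfrak h)$ of Proposition~\ref{pro:Dga2}: by inspection of its defining formula $\bar{\omega}$ takes values in $\mathfrak h$ and each of its terms involves at least one argument from $\mathfrak g$, so it lies in $C_{>}^{1}(\mathfrak g\oplus\mathfrak h,\mathfrak h)$. Hence the MC equation may equivalently be read inside the DGLA $(C_{>}(\mathfrak g\oplus\mathfrak h,\mathfrak h),[\ ,\ ]_{Lts},d_{\pi_{\mathfrak g\oplus\mathfrak h}})$, establishing the equivalence. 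The main technical obstacle is the bookkeeping step: verifying that the eleven families of identities (\ref{Lts02})--(\ref{Lts11}) collected in Proposition~\ref{LY} are precisely the component-wise expansions of $d_{\pi_{\mathfrak g\oplus\mathfrak h}}\bar{\omega}+\tfrac{1}{2}[\bar{\omega},\bar{\omega}]_{Lts}=0$, broken up according to how many arguments lie in $\mathfrak g$ versus $\mathfrak h$; this mirrors the case-by-case analysis already performed in the proof of Proposition~\ref{LY}, so no new calculations are strictly needed beyond invoking that proposition.
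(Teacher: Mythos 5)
Your proposal is correct and follows essentially the same route as the paper: both identify the bracket $[\ ,\ ,\ ]_{(\omega,\theta,\rho)}$ with $\pi_{\mathfrak g\oplus\mathfrak h}+\bar{\omega}$, use $[\pi_{\mathfrak g\oplus\mathfrak h},\pi_{\mathfrak g\oplus\mathfrak h}]_{Lts}=0$ to rewrite the Maurer--Cartan equation for $\bar{\omega}$ as $\tfrac12[\pi_{\mathfrak g\oplus\mathfrak h}+\bar{\omega},\pi_{\mathfrak g\oplus\mathfrak h}+\bar{\omega}]_{Lts}=0$, and then invoke Proposition~\ref{pro:Dga1}. Your added remarks on membership of $\bar{\omega}$ in $C_{>}(\mathfrak g\oplus\mathfrak h,\mathfrak h)$ and on the component-wise matching with Proposition~\ref{LY} are consistent with, and slightly more explicit than, the paper's argument.
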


\begin{proof} According to the definition of Maurer-Cartan element,
$\bar{\omega}$ is a Maurer-Cartan element of the
differential graded Lie algebra $(C_{>}(\mathfrak g\oplus \mathfrak
h, \mathfrak h), [ \ ,  \ ]_{Lts},d_{\pi_{\mathfrak g\oplus \mathfrak h}})$ if and only if
$$d_{\pi_{\mathfrak g\oplus \mathfrak h}}\bar{\omega} +\frac{1}{2}[\bar{\omega},\bar{\omega}]_{Lts}=0.$$
Thus,
\begin{eqnarray*}&&d_{\pi_{\mathfrak g\oplus \mathfrak h}}\bar{\omega} +\frac{1}{2}[\bar{\omega},\bar{\omega}]_{Lts}
=[\pi_{\mathfrak g\oplus\mathfrak
h},\bar{\omega}]_{Lts}+\frac{1}{2}[\bar{\omega},\bar{\omega}]_{Lts}=\frac{1}{2}[\pi_{\mathfrak
g\oplus\mathfrak h}+\bar{\omega},\pi_{\mathfrak g\oplus\mathfrak
h}+\bar{\omega}]=0,\end{eqnarray*} which implies that $\pi_{\mathfrak
g\oplus\mathfrak h}+\bar{\omega}$ is a Maurer-Cartan element of the
differential graded Lie algebra $(C^{*}(\mathfrak g\oplus \mathfrak
h, \mathfrak g\oplus \mathfrak h), [ \ ,  \
]_{Lts},d_{\pi_{\mathfrak g\oplus \mathfrak h}})$. Combining
Proposition \ref{pro:Dga1}, $(\mathfrak g\oplus \mathfrak h, [ \ , \
, \ ]_{(\omega,\theta,\rho)} )$ is a Lie triple system and
$\bar{\omega}$ is a Maurer-Cartan element of $(C_{>}(\mathfrak
g\oplus \mathfrak h, \mathfrak h), [ \ ,  \
]_{Lts},d_{\pi_{\mathfrak g\oplus \mathfrak h}})$.
\end{proof}

\begin{pro}
Let $\mathfrak g$ and $\mathfrak h$ be two Lie triple systems. Then
the equivalent classes of non-abelian extensions $\mathfrak g$ by
$\mathfrak h$ are one-to-one correspond to the gauge equivalence classes
of Maurer-Cartan elements in the differential graded Lie algebra
$(C_{>}(\mathfrak g\oplus \mathfrak h, \mathfrak h), [ \ ,  \
]_{Lts},d_{\pi_{\mathfrak g\oplus \mathfrak h}})$.
\end{pro}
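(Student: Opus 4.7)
The plan is to chain the two classification results already established: non-abelian extensions are classified up to equivalence by $H^3_{nab}(\mathfrak g,\mathfrak h)$ (the previous main proposition), and non-abelian $3$-cocycles are in bijection with Maurer-Cartan elements of $(C_{>}(\mathfrak g\oplus\mathfrak h,\mathfrak h),[\ ,\ ]_{Lts},d_{\pi_{\mathfrak g\oplus\mathfrak h}})$ via the correspondence $(\omega,\theta,\rho)\mapsto\bar\omega$ supplied by the preceding proposition. So the only thing that really needs to be proved here is that equivalence of $3$-cocycles on the nose matches gauge equivalence of the associated Maurer-Cartan elements.

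First I would define the map
\[
\Psi:\mathcal E_{nab}(\mathfrak g,\mathfrak h)\longrightarrow \mathrm{MC}(C_{>}(\mathfrak g\oplus\mathfrak h,\mathfrak h))\big/\!\sim_{\text{gauge}}
\]
by picking a section $s$ of any representative extension $\mathcal E$, forming the triple $(\omega_s,\theta_s,\rho_s)$ by Eqs.~(\ref{C1})--(\ref{C3}), and sending $[\mathcal E]$ to the gauge class of the corresponding $\bar\omega_s$. By Proposition \ref{CY} this triple is a non-abelian $3$-cocycle, and by the previous proposition $\bar\omega_s$ is a Maurer-Cartan element. Well-definedness (independence of the chosen section and the chosen representative) reduces to showing that whenever two non-abelian $3$-cocycles $(\omega_1,\theta_1,\rho_1),(\omega_2,\theta_2,\rho_2)$ are equivalent in the sense of Eqs.~(\ref{E2}), (\ref{E4}), (\ref{E6}) via $\varphi:\mathfrak g\to\mathfrak h$, the associated $\bar\omega_1,\bar\omega_2$ are gauge equivalent.

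The heart of the argument is the explicit gauge transformation. I view $\varphi\in\mathrm{Hom}(\mathfrak g,\mathfrak h)\subset C_{>}^{0}(\mathfrak g\oplus\mathfrak h,\mathfrak h)$; since $C_{>}^{0}$ is abelian by Proposition \ref{pro:Dga2}, the gauge action simplifies drastically, and one checks that
\[
\bar\omega_1 \;=\; e^{\mathrm{ad}_{\tilde\varphi}}\bar\omega_2 \;-\; \tfrac{e^{\mathrm{ad}_{\tilde\varphi}}-1}{\mathrm{ad}_{\tilde\varphi}}\,d_{\pi_{\mathfrak g\oplus\mathfrak h}}\tilde\varphi,
\]
where $\tilde\varphi$ is the extension of $-\varphi$ to $\mathfrak g\oplus\mathfrak h$ by zero on $\mathfrak h$. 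Unpacking this identity on inputs $x+a,y+b,z+c$ reproduces precisely the terms on the right-hand sides of Eqs.~(\ref{E2})--(\ref{E6}) together with Eqs.~(\ref{E3}), (\ref{E5}); this is the computation used in the surjectivity/injectivity step of the previous classification theorem, repackaged through the bracket $[\ ,\ ]_{Lts}$ of Eqs.~(\ref{MC1})--(\ref{MC4}). Conversely, if $\bar\omega_1$ and $\bar\omega_2$ are gauge equivalent via some $\Phi\in C_{>}^{0}(\mathfrak g\oplus\mathfrak h,\mathfrak h)=\mathrm{Hom}(\mathfrak h,\mathfrak h)\oplus\mathrm{Hom}(\mathfrak g,\mathfrak h)$, I restrict $\Phi$ to the $\mathfrak g$-component (the $\mathfrak h$-component merely reparametrizes $\mathfrak h$ and can be normalized) to recover a map $\varphi$ realizing the equivalence of $3$-cocycles.

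The main obstacle, and the only nontrivial computation, is the expansion of the exponential gauge formula on $\bar\omega_2$: because $\tilde\varphi$ has degree $0$ and $\bar\omega_2$ has degree $1$, and since compositions of $\tilde\varphi$ with itself vanish (both through the abelian structure of $C_{>}^{0}$ and because $\tilde\varphi$ lands in $\mathfrak h$ where $\tilde\varphi$ is zero), the series $e^{\mathrm{ad}_{\tilde\varphi}}$ and $\frac{e^{\mathrm{ad}_{\tilde\varphi}}-1}{\mathrm{ad}_{\tilde\varphi}}$ truncate after finitely many terms, producing exactly the cubic-in-$\varphi$ correction $[\varphi(x),\varphi(y),\varphi(z)]_{\mathfrak h}$ together with the quadratic and linear corrections involving $\theta_2,\rho_2$. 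Verifying these finite expansions term by term, and matching them against Eqs.~(\ref{E2})--(\ref{E6}), finishes the proof that $\Psi$ descends to a bijection on equivalence classes; combined with the previous proposition, which provides a Maurer-Cartan element for every non-abelian $3$-cocycle and hence shows surjectivity, we conclude.
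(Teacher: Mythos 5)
Your proposal is correct and follows essentially the same route as the paper: the paper likewise reduces the statement to matching gauge equivalence of Maurer--Cartan elements with equivalence of non-abelian $3$-cocycles, and carries this out by expanding $e^{\mathrm{ad}_{\varphi}}\bar{\omega}-\frac{e^{\mathrm{ad}_{\varphi}}-1}{\mathrm{ad}_{\varphi}}d_{\pi_{\mathfrak g\oplus\mathfrak h}}\varphi$ term by term, observing that $\mathrm{ad}_{\varphi}^{n}$ annihilates everything for $n\geq 3$, and identifying the resulting finite expansion with Eqs.~(\ref{E2})--(\ref{E6}). The only cosmetic discrepancy is your description of $C_{>}^{0}(\mathfrak g\oplus\mathfrak h,\mathfrak h)$ as containing an $\mathrm{Hom}(\mathfrak h,\mathfrak h)$ summand to be normalized away; by the paper's splitting it is just $\mathrm{Hom}(\mathfrak g,\mathfrak h)$, which is where the gauge parameter $\varphi$ already lives.
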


\begin{proof}
Let $\bar{\omega},\bar{\omega}'$ be
two Maurer-Cartan elements of the differential graded Lie algebra
$(C_{>}(\mathfrak g\oplus \mathfrak h, \mathfrak h), [ \ , \
]_{Lts},d_{\pi_{\mathfrak g\oplus \mathfrak h}})$, and $\bar{\omega},\bar{\omega}'$ are
gauge equivalent, there is a linear map $\varphi\in \mathrm{Hom} (\mathfrak
g, \mathfrak h)$ such that
\begin{eqnarray*}\bar{\omega}'&=&e^{ad_{\varphi}}\bar{\omega}-\frac{e^{ad_{\varphi}}-1}{ad_{\varphi}}d_{\pi_{\mathfrak g\oplus \mathfrak h}}\varphi
\\&=&(id+ad_{\varphi}+\frac{1}{2!}ad_{\varphi}^{2}+\cdot\cdot\cdot++\frac{1}{n!}ad_{\varphi}^{n}+\cdot\cdot\cdot)\bar{\omega}
\\&&-(id+ad_{\varphi}+\frac{1}{2!}ad_{\varphi}+\cdot\cdot\cdot+\frac{1}{n!}ad_{\varphi}^{n-1}+\cdot\cdot\cdot)d_{\pi_{\mathfrak g\oplus \mathfrak h}}\varphi.\end{eqnarray*}

For all $w_i=x_i+a_i\in \mathfrak g\oplus\mathfrak h~(i=1,2,3)$,
thanks to Eqs.~~(\ref{NLts}), (\ref{MC1})-(\ref{MC5}), we obtain
\begin{eqnarray*}ad_{\varphi}\bar{\omega}=[\varphi,\bar{\omega}]_{Lts}(w_1,w_2,w_3)&=&-\bar{\omega}(\varphi(w_1),w_2,w_3)-\bar{\omega}(w_1,\varphi(w_2),w_3)
-\bar{\omega}(w_1,w_2,\varphi(w_3))
\\&=&-\theta(x_2,x_3)\varphi(x_1)-D_{\rho}(x_3)(\varphi(x_1),a_2)+\rho(x_2)(\varphi(x_1),a_3)\\&&
+\theta(x_1,x_3)\varphi(x_2)-D_{\rho}(x_3)(a_1,\varphi(x_2))-\rho(x_1)(\varphi(x_2),a_3)\\&&-
D_{\theta}(x_1,x_2)\varphi(x_3)-\rho(x_1)(a_2,\varphi(x_3))+\rho(x_2)(a_1,\varphi(x_3)),
\end{eqnarray*}
\begin{eqnarray*}ad_{\varphi}^{2} \bar{\omega} &=&[\varphi,[\varphi,\bar{\omega}]_{Lts}]_{Lts}(w_1,w_2,w_3)
\\&=&-[\varphi,\bar{\omega}]_{Lts}(\varphi(w_1),w_2,w_3)-[\varphi,\bar{\omega}]_{Lts}(w_1,\varphi(w_2),w_3)-[\varphi,\bar{\omega}]_{Lts}(w_1,w_2,\varphi(w_3))
\\&=&D_{\rho}(x_3)(\varphi(x_1),\varphi(x_2))-\rho(x_2)(\varphi(x_1),\varphi(x_3))+D_{\rho}(x_3)(\varphi(x_1),\varphi(x_2))
\\&&+\rho(x_1)(\varphi(x_2),\varphi(x_3))-\rho(x_2)(\varphi(x_1),\varphi(x_3))+\rho(x_1)(\varphi(x_2),\varphi(x_3))
\\&=&2D_{\rho}(x_3)(\varphi(x_1),\varphi(x_2))-2\rho(x_2)(\varphi(x_1),\varphi(x_3))+2\rho(x_1)(\varphi(x_2),\varphi(x_3)),
\end{eqnarray*}
\begin{eqnarray*}d_{\pi_{\mathfrak
g\oplus\mathfrak h}}\varphi(w_1,w_2,w_3)&=&[\pi_{\mathfrak
g\oplus\mathfrak
h},\varphi]_{Lts}(w_1,w_2,w_3)\\&=&\pi_{\mathfrak
g\oplus\mathfrak h}(\varphi(w_1),w_2,w_3)+\pi_{\mathfrak
g\oplus\mathfrak h}(w_1,\varphi(w_2),w_3)\\&&+\pi_{\mathfrak
g\oplus\mathfrak h}(w_1,w_2,\varphi(w_3))-\varphi(\pi_{\mathfrak
g\oplus\mathfrak
h}(w_1,w_2,w_3)\\&=&[\varphi(x_1),a_2,a_3]_{\mathfrak
h}+[a_1,\varphi(x_2),a_3]_{\mathfrak
h}+[a_1,a_2,\varphi(x_3)]_{\mathfrak
h}-\varphi([x_1,x_2,x_3]_{\mathfrak g}),\end{eqnarray*}
\begin{eqnarray*}ad_{\varphi}(d_{\pi_{\mathfrak
g\oplus\mathfrak h}}\varphi)&=&[\varphi,d_{\pi_{\mathfrak
g\oplus\mathfrak
h}}\varphi]_{Lts}(w_1,w_2,w_3)\\&=&-d_{\pi_{\mathfrak
g\oplus\mathfrak h}}\varphi(\varphi(w_1),w_2,w_3)-d_{\pi_{\mathfrak
g\oplus\mathfrak h}}\varphi(w_1,\varphi(w_2),w_3)-d_{\pi_{\mathfrak
g\oplus\mathfrak h}}\varphi(w_1,w_2,\varphi(w_3))
\\&=&[\varphi(x_1),\varphi(x_2),a_3]_{\mathfrak
h}+[\varphi(x_1),a_2\varphi(x_3)]_{\mathfrak
h}+[\varphi(x_1),\varphi(x_2),a_3]_{\mathfrak
h}\\&&+[a_1,\varphi(x_2),\varphi(x_3)]_{\mathfrak h}
-[\varphi(x_1),a_2,\varphi(x_3)]_{\mathfrak
h}+[a_1,\varphi(x_2),\varphi(x_3)]_{\mathfrak
h}\\&=&2[a_1,\varphi(x_2),\varphi(x_3)]_{\mathfrak
h}+2[\varphi(x_1),a_2,\varphi(x_3)]_{\mathfrak
h}+2[\varphi(x_1),\varphi(x_2),a_3]_{\mathfrak h},\end{eqnarray*}
\begin{eqnarray*}ad_{\varphi}^{2}(d_{\pi_{\mathfrak
g\oplus\mathfrak h}}\varphi)&=&[\varphi,[\varphi,d_{\pi_{\mathfrak
g\oplus\mathfrak
h}}]_{Lts}]_{Lts}(w_1,w_2,w_3)\\&=&-[\varphi,d_{\pi_{\mathfrak
g\oplus\mathfrak
h}}]_{Lts}(\varphi(x_1),w_2,w_3)-[\varphi,d_{\pi_{\mathfrak
g\oplus\mathfrak h}}]_{Lts}(w_1,\varphi(x_2),w_3)
\\&&-[\varphi,d_{\pi_{\mathfrak g\oplus\mathfrak
h}}]_{Lts}(\varphi(x_1),w_2,\varphi(x_3))
\\&=&6[\varphi(x_1),\varphi(x_2),\varphi(x_3)]_{\mathfrak
h},\end{eqnarray*} and
$$ad_{\varphi}^{n}\bar{\omega}=0,~~ad_{\varphi}^{n}(d_{\pi_{\mathfrak
g\oplus\mathfrak h}}\bar{\omega})=0,~~\forall~~n\geq 3.$$
 Therefore, Maurer-Cartan elements $\bar{\omega}'$ and $\bar{\omega}$ are gauge equivalent if and only if
\begin{eqnarray*}&&\bar{\omega}'=\bar{\omega}+[\varphi,\pi]_{Lts}+\frac{1}{2!}[\varphi,[\varphi,\pi]_{Lts}]_{Lts}\\&&-(d_{\pi_{\mathfrak
g\oplus\mathfrak h}}\varphi +\frac{1}{2!}[\varphi,d_{\pi_{\mathfrak
g\oplus\mathfrak
h}}\varphi]_{Lts}+\frac{1}{3!}[\varphi,[\varphi,d_{\pi_{\mathfrak
g\oplus\mathfrak h}}\varphi]_{Lts}]_{Lts}).
\end{eqnarray*}
Hence, two Maurer-Cartan elements $\bar{\omega}$ and $\bar{\omega}'$ are equivalent if and
only if Eqs.~~(\ref{E2})-(\ref{E6}) hold. The proof is completed.

\end{proof}

\section{Inducibility of pairs of  Lie triple system automorphisms}
In this section, we study inducibility of a pair of Lie triple system
automorphisms and characterize them by equivalent conditions.

Let $\mathfrak g$ and $\mathfrak h$ be two Lie triple systems, and
  $$0\longrightarrow\mathfrak h\stackrel{i}{\longrightarrow} \hat{\mathfrak g}\stackrel{p}{\longrightarrow}\mathfrak g\longrightarrow0,$$
 be a non-abelian
extension of $\mathfrak g$ by $\mathfrak h$ with a section $s$ of
$p$. For any automorphism
$\gamma\in \mathrm{Aut} (\hat{\mathfrak g})$, then $\gamma\in \mathrm{Aut} (\mathfrak
h)$ if and only if $\gamma (\mathfrak h)\subseteq \mathfrak h$.
Denote $\mathrm{Aut}_{\mathfrak h}
(\hat{\mathfrak g})=\{\gamma\in \mathrm{Aut} (\hat{\mathfrak g})\mid \gamma
(\mathfrak h)=\mathfrak h\}.$ We define a linear map $\bar{\gamma}:\mathfrak
g\longrightarrow \mathfrak g$ by $$\bar{\gamma}(x)=p\gamma
s(x),~~\forall~x\in \mathfrak g.$$

Assume that $s_1$ and $s_2$ are two distinct sections of
$p$, since $ps_1(x)-ps_2(x)=0$, $s_1(x)-s_2(x)\in
\mathrm{Ker}p\cong \mathfrak h$, which indicates that $\gamma (s_1(x)-s_2(x))\in
\mathfrak h$. Thus, $p\gamma s_1(x)=p\gamma s_2(x)$, which yields
that $\bar{\gamma}$ is independent on the choice of sections.

For all $x,y,z\in \mathfrak g$, since $p|_{\mathfrak h}=0$,
\begin{eqnarray*}&&\bar{\gamma}([x,y,z]_{\mathfrak g})=p\gamma (s[x,y,z]_{\mathfrak g})
\\&=&p\gamma ([s(x),s(y),s(z)]_{\hat{\mathfrak g}}-\omega(x,y,z))
\\&=&p \gamma ([s(x),s(y),s(z)]_{\hat{\mathfrak g}})
\\&=& [p\gamma s(x),p\gamma s(y),p\gamma s(z)]_{\mathfrak g}
\\&=&[\bar{\gamma}(x),\bar{\gamma}(y),\bar{\gamma}(z)]_{\mathfrak g}),\end{eqnarray*}
which yields that $\bar{\gamma}$ is a homomorphism of Lie triple systems. It
is easy to check that $\bar{\gamma}$ is bijective. Thus, $\bar{\gamma}\in
\mathrm{Aut} (\mathfrak g)$. Then we can define a group homomorphism
$$\lambda:\mathrm{Aut}_{\mathfrak h} (\hat{\mathfrak g})\longrightarrow \mathrm{Aut}
(\mathfrak g)\times \mathrm{Aut} (\mathfrak
h),~~\lambda(\gamma)=(\bar{\gamma},\gamma|_{\mathfrak h}).$$

\begin{defi}
 A pair $(\alpha,\beta)\in \mathrm{Aut} (\mathfrak g)\times \mathrm{Aut} (\mathfrak
 h)$ is said to be inducible if $(\alpha,\beta)$ is an image of
 $\lambda$.
\end{defi}

In the following, we investigate when a pair $(\alpha,\beta)$ is
inducible.

\begin{thm} \label{EC} Let $0\longrightarrow\mathfrak h\stackrel{i}{\longrightarrow}
\hat{\mathfrak g}\stackrel{p}{\longrightarrow}\mathfrak
g\longrightarrow0$ be a non-abelian extension of the Lie triple system
$\mathfrak g$ by $\mathfrak h$ with a section $s$ of $p$ and
$(\omega,\theta,\rho)$ be the corresponding non-abelian 3-cocycle
induced by $s$. A pair $(\alpha,\beta)\in \mathrm{Aut}(\mathfrak
g)\times \mathrm{Aut}(\mathfrak h)$ is inducible if and only if there is a
linear map $\varphi:\mathfrak g\longrightarrow \mathfrak h$
satisfying the following conditions:
\begin{align}\label{Iam1}
     \beta(\theta(x,y)a)-\theta(\alpha(x),\alpha(y))\beta(a)=[\beta(a),\varphi(x),\varphi(y)]_{\mathfrak
     h}-D_{\rho}(\alpha(y))(\beta(a),\varphi(x))+\rho(\alpha(x))(\beta(a),\varphi(y)),
\end{align}
\begin{equation}\label{Iam2}
     \beta(\rho(x)(a,b))-\rho(\alpha(x))(\beta(a),\beta(b))=[\beta(a),\varphi(x),\beta(b)]_{\mathfrak
     h},
\end{equation}
\begin{align}\label{Iam3}
     &\beta\omega(x,y,z)-\omega(\alpha(x),\alpha(y),\alpha(z))=D_{\rho}(\alpha(z))(\varphi(x),\varphi(y))-\rho(\alpha(y))(\varphi(x),\varphi(z))-
\theta(\alpha(y),\alpha(z))\varphi(x)\\
&+\rho(\alpha(x))(\varphi(y),\varphi(z))+\theta(\alpha(x),\alpha(z))\varphi(y)
-D_{\theta}(\alpha(x),\alpha(y))\varphi(z)+\varphi([x,y,z]_{\mathfrak
     g})-[\varphi(x),\varphi(y),\varphi(z)]_{\mathfrak
     h},\nonumber
\end{align}
for all $x,y,z\in \mathfrak g$ and $a,b\in \mathfrak h$.
\end{thm}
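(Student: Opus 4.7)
The plan is to prove both directions by constructing, in each case, the missing ingredient ($\varphi$ or $\gamma$) from the given one and then extracting the compatibility conditions from the requirement that $\gamma$ is a Lie triple system homomorphism.

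\textbf{Necessity.} Suppose $(\alpha,\beta)=\lambda(\gamma)$ for some $\gamma\in\mathrm{Aut}_{\mathfrak h}(\hat{\mathfrak g})$. Using the section $s$ of $p$, note that $p(\gamma s(x)-s\alpha(x))=\bar\gamma(x)-\alpha(x)=0$, so $\gamma s(x)-s\alpha(x)\in\mathfrak h$. Hence we may define a linear map
\begin{equation*}
\varphi:\mathfrak g\longrightarrow\mathfrak h,\qquad \varphi(x)=\gamma s(x)-s\alpha(x).
\end{equation*}
Now I would apply the homomorphism identity $\gamma[u,v,w]_{\hat{\mathfrak g}}=[\gamma u,\gamma v,\gamma w]_{\hat{\mathfrak g}}$ to judiciously chosen triples $(u,v,w)$ where each entry lies either in $\mathfrak h$ or in $s(\mathfrak g)$. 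Writing the brackets in $\hat{\mathfrak g}$ through $(\omega,\theta,\rho,[\cdot,\cdot,\cdot]_{\mathfrak h})$ according to (\ref{C1})-(\ref{C3}), one obtains:
\begin{itemize}
\item[\textrm{(i)}] from $(u,v,w)=(a,s(x),s(y))$ and $\gamma(a)=\beta(a)$, $\gamma s(x)=s\alpha(x)+\varphi(x)$, expansion via $\theta$, $\rho$, $D_\rho$ and $[\cdot,\cdot,\cdot]_{\mathfrak h}$ yields (\ref{Iam1});
\item[\textrm{(ii)}] from $(u,v,w)=(s(x),a,b)$ one gets (\ref{Iam2});
\item[\textrm{(iii)}] from $(u,v,w)=(s(x),s(y),s(z))$ one gets (\ref{Iam3}).
\end{itemize}

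\textbf{Sufficiency.} Conversely, given $\varphi$ satisfying (\ref{Iam1})-(\ref{Iam3}), every element of $\hat{\mathfrak g}$ admits a unique decomposition $a+s(x)$ with $a\in\mathfrak h$, $x\in\mathfrak g$, so I would define
\begin{equation*}
\gamma:\hat{\mathfrak g}\longrightarrow\hat{\mathfrak g},\qquad \gamma(a+s(x))=\beta(a)+\varphi(x)+s\alpha(x).
\end{equation*}
Bijectivity of $\gamma$ is immediate from bijectivity of $\alpha$ and $\beta$; $\gamma(\mathfrak h)=\mathfrak h$ since $\gamma|_{\mathfrak h}=\beta$; and by construction $\bar\gamma=p\gamma s=\alpha$. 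So the only nontrivial task is to check $\gamma$ is a Lie triple system homomorphism, after which $\lambda(\gamma)=(\alpha,\beta)$ finishes the proof.

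\textbf{Main obstacle.} Verifying the homomorphism identity $\gamma[u,v,w]_{\hat{\mathfrak g}}=[\gamma u,\gamma v,\gamma w]_{\hat{\mathfrak g}}$ for all triples from $\hat{\mathfrak g}=s(\mathfrak g)\oplus\mathfrak h$ naturally splits into the same $2^3=8$ cases that appeared in the case analysis of Proposition \ref{LY}. I would organize them by the number of entries lying in $\mathfrak h$: the pure $\mathfrak h$ case reduces to $\beta\in\mathrm{Aut}(\mathfrak h)$; the single-$\mathfrak h$ cases (three subcases by position) all reduce, after using the defining relations (\ref{Lts01}) for $D_\theta$ and $D_\rho$, to equation (\ref{Iam1}); the two-$\mathfrak h$ cases similarly reduce, via (\ref{Lts01}), to (\ref{Iam2}); and the pure $s(\mathfrak g)$ case reduces, after writing $[s(x),s(y),s(z)]_{\hat{\mathfrak g}}=s[x,y,z]_{\mathfrak g}+\omega(x,y,z)$, to (\ref{Iam3}). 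The bookkeeping is lengthy but each individual case is a routine expansion; the genuine substance of the argument is the recognition that the three cocycle-level conditions (\ref{Iam1})-(\ref{Iam3}) are exactly what is needed, with the derived relations on $D_\theta,D_\rho$ taking care of the remaining positional variants.
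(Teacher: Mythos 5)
Your overall strategy coincides with the paper's: in the necessity direction you measure the failure of $\gamma s$ to equal $s\alpha$ by a map $\varphi$ into $\mathfrak h$ and read off (\ref{Iam1})--(\ref{Iam3}) from the homomorphism identity applied to the triples $(a,s(x),s(y))$, $(s(x),a,b)$ and $(s(x),s(y),s(z))$; in the sufficiency direction you reconstruct $\gamma$ from $(\alpha,\beta,\varphi)$ and run the case-by-case check on $\hat{\mathfrak g}=s(\mathfrak g)\oplus\mathfrak h$. This is exactly the paper's proof, down to the organization of the eight cases.

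There is, however, a sign issue that is not cosmetic. You set $\varphi=\gamma s-s\alpha$ and, consistently with that, $\gamma(a+s(x))=\beta(a)+\varphi(x)+s\alpha(x)$; the theorem's conditions are written for $\varphi=s\alpha-\gamma s$, equivalently $\gamma(a+s(x))=\beta(a)-\varphi(x)+s\alpha(x)$. Each of (\ref{Iam1})--(\ref{Iam3}) mixes terms of different homogeneity in $\varphi$ (for instance the right-hand side of (\ref{Iam1}) contains the quadratic term $[\beta(a),\varphi(x),\varphi(y)]_{\mathfrak h}$ alongside the linear terms $-D_{\rho}(\alpha(y))(\beta(a),\varphi(x))+\rho(\alpha(x))(\beta(a),\varphi(y))$), so these conditions are not invariant under $\varphi\mapsto-\varphi$. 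Concretely, expanding $\gamma([a,s(x),s(y)]_{\hat{\mathfrak g}})=[\beta(a),s\alpha(x)+\varphi(x),s\alpha(y)+\varphi(y)]_{\hat{\mathfrak g}}$ with your $\varphi$ yields $[\beta(a),\varphi(x),\varphi(y)]_{\mathfrak h}+D_{\rho}(\alpha(y))(\beta(a),\varphi(x))-\rho(\alpha(x))(\beta(a),\varphi(y))$, which is (\ref{Iam1}) with the linear terms negated. In the necessity direction this is harmless, since $-\varphi$ then serves as the witness the theorem asks for; but in the sufficiency direction your $\gamma$ is \emph{not} a homomorphism when $\varphi$ satisfies (\ref{Iam1})--(\ref{Iam3}) as stated --- you must define $\gamma(a+s(x))=\beta(a)-\varphi(x)+s\alpha(x)$. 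With that single change the argument goes through and agrees with the paper. (The paper's own write-up is internally inconsistent on precisely this point: it declares $\varphi=s\alpha-\gamma s$ but then substitutes $\gamma s(x)-s\alpha(x)=\varphi(x)$ in the computation; it is the stated definition, not the substitution, that is compatible with (\ref{Iam1})--(\ref{Iam3}).)
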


\begin{proof} Suppose that $(\alpha,\beta)\in \mathrm{Aut}(\mathfrak
g)\times \mathrm{Aut}(\mathfrak h)$ is inducible, then there is an
automorphism $\gamma\in \mathrm{Aut}_{\mathfrak h}(\hat{\mathfrak g})$ such
that $\gamma|_{\mathfrak h}=\beta$ and $p\gamma s=\alpha$. For all
$x\in \mathfrak g$, since $s$ is a section of $p$,
that is, $ps=I_{\mathfrak g}$,
$$p(\gamma s-s\alpha)(x)=\alpha(x)-\alpha(x)=0,$$
which implies that $(\gamma s-s\alpha)(x)\in \mathrm{ker}p\cong \mathfrak h$.
So we can define a linear map $\varphi:\mathfrak g\longrightarrow
\mathfrak h$ by
$$\varphi(x)=(s\alpha-\gamma s)(x),~~\forall~x\in \mathfrak g.$$
For $x,y\in \mathfrak g, a\in \mathfrak h$, using
Eqs.~~(\ref{C2})-(\ref{C3}), we get
\begin{eqnarray*}&&
      \beta(\theta(x,y)a)-\theta(\alpha(x),\alpha(y))\beta(a)\\&=&\beta[a,s(x),s(y)]_{\hat{\mathfrak g}}-[\beta(a),s\alpha(x),s\alpha(y)]_{\hat{\mathfrak g}}
      \\&=&[\beta(a),\beta s(x),\beta s(y)]_{\hat{\mathfrak g}}-[\beta(a),s\alpha(x),s\alpha(y)]_{\hat{\mathfrak g}}
      \\&=&[\beta(a),\beta s(x)-s\alpha(x),\beta s(y)]_{\hat{\mathfrak
      g}}+[\beta(a),s\alpha(x),\beta s(y)]_{\hat{\mathfrak g}}-[\beta(a),s\alpha(x),s\alpha(y)]_{\hat{\mathfrak g}}
      \\&=&[\beta(a),\varphi(x),\beta s(y)]_{\hat{\mathfrak
      g}}+[\beta(a),s\alpha(x),\varphi(y)]_{\hat{\mathfrak g}}
\\&=&[\beta(a),\varphi(x),\beta s(y)-s\alpha(y)]_{\hat{\mathfrak
      g}}+[\beta(a),\varphi(x),s\alpha(y)]_{\hat{\mathfrak
      g}}+ [\beta(a),s\alpha(x),\varphi(y)]_{\hat{\mathfrak g}}
\\&=&[\beta(a),\varphi(x),\varphi(y)]_{\hat{\mathfrak
      g}}+[\beta(a),\varphi(x),s\alpha(y)]_{\hat{\mathfrak
      g}}- [s\alpha(x),\beta(a),\varphi(y)]_{\hat{\mathfrak g}}
\\&=&[\beta(a),\varphi(x),\varphi(y)]_{\mathfrak
     h}+D_{\rho}(\alpha(y))(\beta(a),\varphi(x))-\rho(\alpha(x))(\beta(a),\varphi(y)),
\end{eqnarray*}
that is, Eq.~~(\ref{Iam1}) holds. Similarly, Eqs.~~(\ref{Iam2}) and
(\ref{Iam3}) hold.

Conversely, suppose that $(\alpha,\beta)\in \mathrm{Aut}(\mathfrak g)\times
\mathrm{Aut}(\mathfrak h)$ and there is a linear map $\varphi:\mathfrak
g\longrightarrow \mathfrak h$ satisfying Eqs.~~(\ref{Iam1})-
(\ref{Iam3}). Due to $s$ being a section of $p$,
then all $\hat{w}\in \hat{\mathfrak g}$ can be written as
$\hat{w}=a+s(x)$ for some $a\in \mathfrak h,x\in \mathfrak g.$
Define a linear map $\gamma:\hat{\mathfrak g}\longrightarrow
\hat{\mathfrak g}$ by
$$\gamma(\hat{w})=\gamma(a+s(x))=\beta(a)-\varphi(x)+s\alpha(x).$$
If $\gamma(\hat{w})=0,$ then $s\alpha(x)=0$ and
$\beta(a)+\varphi(x)=0$. In view of $s$ and $\alpha$ being
injective, we get $x=0$, it follows that $a=0$. Thus,
$\hat{w}=a+s(x)=0$, that is $\gamma$ is injective. For any
$\hat{w}=a+s(x)\in \hat{\mathfrak g}$,
$$\gamma(\beta^{-1}(a)+\beta^{-1}\varphi\alpha^{-1}(x)+s\alpha^{-1}(x))=a+s(x)=\hat{w},$$
which yields that $\gamma$ is surjective. In all, $\gamma$ is
bijective.

Next we show that $\gamma$ is a homomorphism of Lie triple systems.
In fact, for all $\hat{w}_i=a_i+s(x_i)\in \hat{\mathfrak
g}~(i=1,2,3)$,
\begin{eqnarray*}&&
      [\gamma(\hat{w}_1),\gamma(\hat{w}_2),\gamma(\hat{w}_3)]_{\hat{\mathfrak g}}
\\&=&[\beta(a_1)-\varphi(x_1)+s\alpha(x_1),\beta(a_2)-\varphi(x_2)+s\alpha(x_2),\beta(a_3)-\varphi(x_3)+s\alpha(x_3)]_{\hat{\mathfrak g}}
\\&=&[\beta(a_1),\beta(a_2),\beta(a_3)]_{\hat{\mathfrak g}}-[\beta(a_1),\beta(a_2),\varphi(x_3)]_{\hat{\mathfrak
g}}+[\beta(a_1),\beta(a_2),s\alpha(x_3)]_{\hat{\mathfrak g}}
\\&&-[\beta(a_1),\varphi(x_2),\beta(a_3)]_{\hat{\mathfrak
g}}+[\beta(a_1),\varphi(x_2),\varphi(x_3)]_{\hat{\mathfrak
g}}-[\beta(a_1),\varphi(x_2),s\alpha(x_3)]_{\hat{\mathfrak g}}
\\&&+[\beta(a_1),s\alpha(x_2),\beta(a_3)]_{\hat{\mathfrak
g}}-[\beta(a_1),s\alpha(x_2),\varphi(x_3)]_{\hat{\mathfrak
g}}+[\beta(a_1),s\alpha(x_2),s\alpha(x_3)]_{\hat{\mathfrak g}}
\\&&-[\varphi(x_1),\beta(a_2),\beta(a_3)]_{\hat{\mathfrak
g}}+[\varphi(x_1),\beta(a_2),\varphi(x_3)]_{\hat{\mathfrak
g}}-[\varphi(x_1),\beta(a_2),s\alpha(x_3)]_{\hat{\mathfrak g}}
\\&&+[\varphi(x_1),\varphi(x_2),\beta(a_3)]_{\hat{\mathfrak
g}}-[\varphi(x_1),\varphi(x_2),\varphi(x_3)]_{\hat{\mathfrak
g}}+[\varphi(x_1),\varphi(x_2),s\alpha(x_3)]_{\hat{\mathfrak g}}
\\&&-[\varphi(x_1),s\alpha(x_2),\beta(a_3)]_{\hat{\mathfrak
g}}+[\varphi(x_1),s\alpha(x_2),\varphi(x_3)]_{\hat{\mathfrak
g}}-[\varphi(x_1),s\alpha(x_2),s\alpha(x_3)]_{\hat{\mathfrak g}}
\\&&+[s\alpha(x_1),\beta(a_2),\beta(a_3)]_{\hat{\mathfrak
g}}-[s\alpha(x_1),\beta(a_2),\varphi(x_3)]_{\hat{\mathfrak
g}}+[s\alpha(x_1),\beta(a_2),s\alpha(x_3)]_{\hat{\mathfrak g}}
\\&&-[s\alpha(x_1),\varphi(x_2),\beta(a_3)]_{\hat{\mathfrak
g}}+[s\alpha(x_1),\varphi(x_2),\varphi(x_3)]_{\hat{\mathfrak
g}}-[s\alpha(x_1),\varphi(x_2),s\alpha(x_3)]_{\hat{\mathfrak g}}
\\&&+[s\alpha(x_1),s\alpha(x_2),\beta(a_3)]_{\hat{\mathfrak
g}}-[s\alpha(x_1),s\alpha(x_2),\varphi(x_3)]_{\hat{\mathfrak
g}}+[s\alpha(x_1),s\alpha(x_2),s\alpha(x_3)]_{\hat{\mathfrak g}}
\\&=&\beta([a_1,a_2,a_3]_{\hat{\mathfrak
g}})-[\beta(a_1),\beta(a_2),\varphi(x_3)]_{\hat{\mathfrak
g}}+D_{\rho}(\alpha(x_3))(\beta(a_1),\beta(a_2))
\\&&-[\beta(a_1),\varphi(x_2),\beta(a_3)]_{\hat{\mathfrak
g}}+[\beta(a_1),\varphi(x_2),\varphi(x_3)]_{\hat{\mathfrak
g}}-D_{\rho}(\alpha(x_3))(\beta(a_1),\varphi(x_2))
\\&&-\rho(\alpha(x_2))(\beta(a_1),\beta(a_3))+\rho(\alpha(x_2))
(\beta(a_1),\varphi(x_3))+\theta(\alpha(x_2),\alpha(x_3))\beta(a_1)
\\&&-[\varphi(x_1),\beta(a_2),\beta(a_3)]_{\hat{\mathfrak
g}}+[\varphi(x_1),\beta(a_2),\varphi(x_3)]_{\hat{\mathfrak
g}}-D_{\rho}(\alpha(x_3))(\varphi(x_1),\beta(a_2))
\\&&+[\varphi(x_1),\varphi(x_2),\beta(a_3)]_{\hat{\mathfrak
g}}-[\varphi(x_1),\varphi(x_2),\varphi(x_3)]_{\hat{\mathfrak
g}}+D_{\rho}(\alpha(x_3))(\varphi(x_1),\varphi(x_2))
\\&&+\rho(\alpha(x_2))(\varphi(x_1),\beta(a_3))-\rho(\alpha(x_2))(\varphi(x_1),\varphi(x_3))
-\theta(\alpha(x_2),\alpha(x_3))\varphi(x_1)
\\&&+\rho(\alpha(x_1))(\beta(a_2),\beta(a_3))-\rho(\alpha(x_1))(\beta(a_2),\varphi(x_3))
-\theta(\alpha(x_1),\alpha(x_3))\beta(a_2)
\\&&-\rho(\alpha(x_1))(\varphi(x_2),\beta(a_3))+\rho(\alpha(x_1))(\varphi(x_2),\varphi(x_3))
+\theta(\alpha(x_1),\alpha(x_3))\varphi(x_2)
\\&&+D_{\theta}(\alpha(x_1),\alpha(x_2))\beta(a_3)-D_{\theta}(\alpha(x_1),\alpha(x_2))\varphi(x_3)
+\omega(\alpha(x_1),\alpha(x_2),\alpha(x_3))\\&&+s[\alpha(x_1),\alpha(x_2),\alpha(x_3)]_{\mathfrak g}
\end{eqnarray*}
and
\begin{eqnarray*}&&
      \gamma([\hat{w}_1,\hat{w}_2,\hat{w}_3]_{\hat{\mathfrak g}})
\\&=&\gamma([a_1,a_2,a_3]_{\hat{\mathfrak g}}+[a_1,a_2,s(x_3)]_{\hat{\mathfrak g}}+[a_1,s(x_2),a_3]_{\hat{\mathfrak g}}
+[a_1,s(x_2),s(x_3)]_{\hat{\mathfrak
g}}+[s(x_1),a_2,s(x_3)]_{\hat{\mathfrak
g}}\\&&+[s(x_1),s(x_2),a_3]_{\hat{\mathfrak
g}}+[s(x_1),a_2,a_3]_{\hat{\mathfrak g}}
+\omega(x_1,x_2,x_3)+s[x_1,x_2,x_3]_{\mathfrak g})
\\&=&\beta([a_1,a_2,a_3]_{\hat{\mathfrak g}})+\beta(D_{\rho}(x_3)(a_1,a_2)-\rho(x_2)
(a_1,a_3) +\theta(x_2,x_3)a_1
-\theta(x_1,x_3)a_2
\\&&+D_{\theta}(x_1,x_2)a_3+\rho(x_1)(a_2,a_3))+\beta\omega(x_1,x_2,x_3)-\varphi[x_1,x_2,x_3]_{\mathfrak g}+s\alpha[x_1,x_2,x_3]_{\mathfrak g}.
\end{eqnarray*}
Thanks to Eqs.~~(\ref{Iam1})-(\ref{Iam3}), we have
$$ \gamma[\hat{w}_1,\hat{w}_2,\hat{w}_3]_{\hat{\mathfrak
g}}=[\gamma(\hat{w}_1),\gamma(\hat{w}_2),\gamma(\hat{w}_3)]_{\hat{\mathfrak
g}},$$ which implies that $\gamma$ is a homomorphism of Lie triple
systems. Thus, $\gamma\in \mathrm{Aut}(\hat{\mathfrak g})$. Finally, we show
that $\gamma|_{\mathfrak h}=\beta$ and $p\gamma s=\alpha$. In fact,
$$\gamma(a)=\gamma(a+s(0))=\beta(a),~~\forall~a\in \mathfrak h$$ and
$$(p\gamma s)(x)=p\gamma(0+s(x))=p(s\alpha(x)-\varphi(x))=ps\alpha(x)=\alpha(x),~~\forall~x\in \mathfrak g.$$
Therefore, $\gamma|_{\mathfrak h}=\beta$ and $p\gamma s=\alpha$.
Thus, $(\alpha,\beta)\in \mathrm{Aut}(\mathfrak g)\times \mathrm{Aut}(\mathfrak h)$ is
inducible.
\end{proof}

Let $0\longrightarrow\mathfrak h\stackrel{i}{\longrightarrow}
\hat{\mathfrak g}\stackrel{p}{\longrightarrow}\mathfrak
g\longrightarrow0$ be a non-abelian extension of the Lie triple system
$\mathfrak g$ by $\mathfrak h$ with a section $s$ of $p$ and $(\omega,\theta,\rho)$ be the
corresponding non-abelian 3-cocycle induced by $s$.

 For all $(\alpha,\beta)\in \mathrm{Aut}(\mathfrak
g)\times \mathrm{Aut}(\mathfrak h)$. Define maps $\omega_{(\alpha,\beta)}:\mathfrak g\times
\mathfrak g\times\mathfrak g\longrightarrow \mathfrak
h,~\theta_{(\alpha,\beta)}:\mathfrak g\times \mathfrak
g\longrightarrow \mathfrak{gl}(\mathfrak h),~\rho_{(\alpha,\beta)}:\mathfrak
g\longrightarrow \mathrm{Hom} (\mathfrak h\times \mathfrak h,\mathfrak h)$
 respectively by
 \begin{equation}\label{Inc1}\omega_{(\alpha,\beta)}(x,y,z)=\beta\omega(\alpha^{-1}(x),\alpha^{-1}(y),\alpha^{-1}(z)),\end{equation}
 \begin{equation}\label{Inc2}\theta_{(\alpha,\beta)}(x,y)a=\beta\theta(\alpha^{-1}(x),\alpha^{-1}(y))\beta^{-1}(a)
,\end{equation}
 \begin{equation}\label{Inc3}\rho_{(\alpha,\beta)}(x)(a,b)=\beta \rho(\alpha^{-1}(x))(\beta^{-1}(a),\beta^{-1}(b)).\end{equation}
 By direct computations, we get
\begin{equation}\label{Inc4}D_{\theta_{(\alpha,\beta)}}(x,y)=\beta D_{\theta}(\alpha^{-1}(x),\alpha^{-1}(y))\beta^{-1}(a),
~~D_{\rho_{(\alpha,\beta)}}(a,b)=\beta D_{\rho}(\alpha^{-1}(x))(\beta^{-1}(a),\beta^{-1}(b)).\end{equation}
for all $x,y,z\in \mathfrak g,a,b\in \mathfrak h$.
\begin{pro} With the above notations,
$(\omega_{(\alpha,\beta)},\theta_{(\alpha,\beta)},\rho_{(\alpha,\beta)})$
is a non-abelian 3-cocycle. \end{pro}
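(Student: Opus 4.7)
The natural strategy is to avoid checking each of the non-abelian cocycle identities (\ref{Lts02})--(\ref{Lts11}) by hand and instead leverage Proposition~\ref{LY}, which identifies non-abelian 3-cocycles on $\mathfrak g$ with values in $\mathfrak h$ with Lie triple system structures on $\mathfrak g\oplus\mathfrak h$ of the prescribed shape. Thus it suffices to exhibit a Lie-triple-system isomorphism between the candidate structure $\mathfrak g\oplus_{(\omega_{(\alpha,\beta)},\theta_{(\alpha,\beta)},\rho_{(\alpha,\beta)})}\mathfrak h$ and the already known Lie triple system $\mathfrak g\oplus_{(\omega,\theta,\rho)}\mathfrak h$.

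The natural candidate is the linear map $\Phi:\mathfrak g\oplus\mathfrak h\to\mathfrak g\oplus\mathfrak h$ defined by $\Phi(x+a)=\alpha^{-1}(x)+\beta^{-1}(a)$, which is bijective since $\alpha\in\mathrm{Aut}(\mathfrak g)$ and $\beta\in\mathrm{Aut}(\mathfrak h)$. The main step is then to verify
$$\Phi\bigl([x+a,y+b,z+c]_{(\omega_{(\alpha,\beta)},\theta_{(\alpha,\beta)},\rho_{(\alpha,\beta)})}\bigr)=[\Phi(x+a),\Phi(y+b),\Phi(z+c)]_{(\omega,\theta,\rho)}$$
term by term, using the defining equations (\ref{Inc1})--(\ref{Inc4}). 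For instance, the cocycle contribution becomes $\beta^{-1}\omega_{(\alpha,\beta)}(x,y,z)=\omega(\alpha^{-1}(x),\alpha^{-1}(y),\alpha^{-1}(z))$, and similarly $\beta^{-1}(\theta_{(\alpha,\beta)}(y,z)a)=\theta(\alpha^{-1}(y),\alpha^{-1}(z))\beta^{-1}(a)$, matching exactly the corresponding summands of the bracket on $\mathfrak g\oplus_{(\omega,\theta,\rho)}\mathfrak h$ evaluated at the images $\Phi(x+a),\Phi(y+b),\Phi(z+c)$. The remaining $\rho$-, $D_\theta$- and $D_\rho$-terms pair up in the same way. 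The purely $\mathfrak g$-valued summand $\alpha^{-1}[x,y,z]_{\mathfrak g}=[\alpha^{-1}(x),\alpha^{-1}(y),\alpha^{-1}(z)]_{\mathfrak g}$ uses that $\alpha$ is an automorphism of $\mathfrak g$, and the $\mathfrak h$-valued summand uses the analogous fact for $\beta$.

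Once $\Phi$ is verified to intertwine the two brackets, the candidate bracket $[\ ,\ ,\ ]_{(\omega_{(\alpha,\beta)},\theta_{(\alpha,\beta)},\rho_{(\alpha,\beta)})}$ endows $\mathfrak g\oplus\mathfrak h$ with a Lie triple system structure isomorphic to $\mathfrak g\oplus_{(\omega,\theta,\rho)}\mathfrak h$, and applying the ``only if'' direction of Proposition~\ref{LY} immediately gives that $(\omega_{(\alpha,\beta)},\theta_{(\alpha,\beta)},\rho_{(\alpha,\beta)})$ is a non-abelian 3-cocycle. The only real obstacle is the bookkeeping in the term-by-term matching, but each twisted structure map in (\ref{Inc1})--(\ref{Inc3}) was defined precisely so that conjugation by $(\alpha,\beta)$ lands it in the correct slot, so no further identities are needed beyond the automorphism property of $\alpha$ and $\beta$.
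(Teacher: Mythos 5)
Your argument is correct, but it takes a genuinely different route from the paper. The paper proves the proposition by brute force: it writes out the verification of identity (\ref{Lts1}) for $(\omega_{(\alpha,\beta)},\theta_{(\alpha,\beta)},\rho_{(\alpha,\beta)})$ directly from the definitions (\ref{Inc1})--(\ref{Inc4}) and the fact that $(\omega,\theta,\rho)$ satisfies (\ref{Lts1}), and then asserts that the remaining identities (\ref{Lts02}) and (\ref{Lts2})--(\ref{Lts11}) follow ``by the same procedure.'' You instead transport the structure: the map $\Phi(x+a)=\alpha^{-1}(x)+\beta^{-1}(a)$ intertwines $[\,\cdot\,,\cdot\,,\cdot\,]_{(\omega_{(\alpha,\beta)},\theta_{(\alpha,\beta)},\rho_{(\alpha,\beta)})}$ with $[\,\cdot\,,\cdot\,,\cdot\,]_{(\omega,\theta,\rho)}$ term by term (this is exactly what (\ref{Inc1})--(\ref{Inc4}) were designed to do), so the twisted bracket is the pullback of a known Lie triple system structure along a linear bijection and hence is itself a Lie triple system; the ``only if'' direction of Proposition~\ref{LY} then yields the cocycle conditions all at once. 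Your approach buys a uniform treatment of all eleven-plus identities with a single short computation, at the cost of invoking the full strength of Proposition~\ref{LY} and of Proposition~\ref{CY} (to know that $(\omega,\theta,\rho)$ is itself a non-abelian $3$-cocycle, so that $\mathfrak g\oplus_{(\omega,\theta,\rho)}\mathfrak h$ is a Lie triple system to pull back from); the paper's approach is self-contained at the level of the individual identities but leaves most of the verification to the reader. Both are valid; yours is arguably the cleaner proof.
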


\begin{proof} For all $x_1,x_2,y_1,y_2,y_3\in \mathfrak g$,
using Eqs.~(\ref{Lts1}), (\ref{Inc1}), (\ref{Inc2}) and
(\ref{Inc4}), we get
\begin{eqnarray*}&&D_{\theta_{(\alpha,\beta)}}(x_1,x_2)\omega_{(\alpha,\beta)}(y_1,y_2,y_3)+\omega_{(\alpha,\beta)}(x_1,x_2,[y_1,y_2,y_3]_{\mathfrak
g})\\&=&\beta\Big(
D_{\theta}(\alpha^{-1}(x_1),\alpha^{-1}(x_2))\beta^{-1}\big(\beta\omega_{(\alpha,\beta)}(\alpha^{-1}(y_1),\alpha^{-1}(y_2),\alpha^{-1}(y_3))\big)\Big)
\\&&-\beta\Big(\omega_{(\alpha,\beta)}(\alpha^{-1}(x_1),\alpha^{-1}(x_2),\alpha^{-1}([y_1,y_2,y_3]_{\mathfrak
g}))\Big)
\\&=&
\beta
\omega([\alpha^{-1}(x_1),\alpha^{-1}(x_2),\alpha^{-1}(y_1)]_{\mathfrak
g},\alpha^{-1}(y_2),\alpha^{-1}(y_3))\\&&+\beta\Big(\theta(\alpha^{-1}(y_2),\alpha^{-1}(y_3))\omega(\alpha^{-1}(x_1),\alpha^{-1}(x_2),\alpha^{-1}(y_1))\Big)
\\&&+\beta\omega(\alpha^{-1}(y_1),[\alpha^{-1}(x_1),\alpha^{-1}(x_2),\alpha^{-1}(y_2)]_{\mathfrak
g},\alpha^{-1}(y_3))\\&&-\beta\Big(\theta(\alpha^{-1}(y_1),\alpha^{-1}(y_3))\omega(\alpha^{-1}(x_1),\alpha^{-1}(x_2),\alpha^{-1}(y_2))\Big)
\\&&+\beta\omega(\alpha^{-1}(y_1),\alpha^{-1}(y_2),[\alpha^{-1}(x_1),\alpha^{-1}(x_2),\alpha^{-1}(y_3)]_{\mathfrak
g})\\&&+\beta\Big(
D_{\theta}(\alpha^{-1}(y_1),\alpha^{-1}(y_2))\omega(\alpha^{-1}(x_1),\alpha^{-1}(x_2),\alpha^{-1}(y_3))\Big)
\\&=&\omega_{(\alpha,\beta)}([x_1,x_2,y_1]_{\mathfrak
g},y_2,y_3)+\theta_{(\alpha,\beta)}(y_2,y_3)\omega_{(\alpha,\beta)}(x_1,x_2,y_1)+\omega_{(\alpha,\beta)}(y_1,[x_1,x_2,y_2]_{\mathfrak
g},y_3)\\&&-\theta_{(\alpha,\beta)}(y_1,y_3)\omega_{(\alpha,\beta)}(x_1,x_2,y_2)+\omega_{(\alpha,\beta)}(y_1,y_2,[x_1,x_2,y_3]_{\mathfrak
g})+D_{\theta_{(\alpha,\beta)}}(y_1,y_2)\omega_{(\alpha,\beta)}(x_1,x_2,y_3),
 \end{eqnarray*}
which implies that Eq.~~(\ref{Lts1}) holds for
$\omega_{(\alpha,\beta)},\theta_{(\alpha,\beta)}$. Take the same procedure,
Eqs.~~(\ref{Lts02}) and (\ref{Lts2})-(\ref{Lts11}) hold for
$\omega_{(\alpha,\beta)},\theta_{(\alpha,\beta)},D_{\theta_{(\alpha,\beta)}},\rho_{(\alpha,\beta)}$ and
$D_{\rho_{(\alpha,\beta)}}$. The proof is finished.
\end{proof}

\begin{thm} \label{Eth1} Let $0\longrightarrow\mathfrak h\stackrel{i}{\longrightarrow}
\hat{\mathfrak g}\stackrel{p}{\longrightarrow}\mathfrak
g\longrightarrow0$ be a non-abelian extension of
$\mathfrak g$ by $\mathfrak h$ with a section $s$ of $p$ and $(\omega,\theta,\rho)$ be the
corresponding non-abelian 3-cocycle induced by $s$. A pair $(\alpha,\beta)\in \mathrm{Aut}(\mathfrak
g)\times \mathrm{Aut}(\mathfrak h)$ is inducible if and only if the
non-abelian 3-cocycles $(\omega,\theta,\rho)$ and
$(\omega_{(\alpha,\beta)},\theta_{(\alpha,\beta)},\rho_{(\alpha,\beta)})$
are equivalent.
\end{thm}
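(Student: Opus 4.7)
The strategy is to view Theorem \ref{Eth1} as a direct translation of Theorem \ref{EC}. The earlier theorem already reduces inducibility of $(\alpha,\beta)$ to the existence of a linear map $\varphi:\mathfrak g\to\mathfrak h$ satisfying the three conditions (\ref{Iam1})-(\ref{Iam3}). What remains is to recognize those identities as exactly the equivalence relation between the cocycles $(\omega,\theta,\rho)$ and $(\omega_{(\alpha,\beta)},\theta_{(\alpha,\beta)},\rho_{(\alpha,\beta)})$ given by (\ref{E2}), (\ref{E4}), (\ref{E6}), once the transition map is chosen appropriately.

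Concretely, I would set up the bijection $\varphi\leftrightarrow\tilde\varphi$ by $\tilde\varphi:=\varphi\circ\alpha^{-1}$ (equivalently, $\varphi=\tilde\varphi\circ\alpha$). For the forward implication, assuming $(\alpha,\beta)$ is inducible, Theorem \ref{EC} yields a $\varphi$ satisfying (\ref{Iam1})-(\ref{Iam3}). Substituting $x\mapsto\alpha^{-1}(x)$, $y\mapsto\alpha^{-1}(y)$, $z\mapsto\alpha^{-1}(z)$ and $a\mapsto\beta^{-1}(a)$, $b\mapsto\beta^{-1}(b)$, and using the defining formulas (\ref{Inc1})-(\ref{Inc4}) to rewrite $\beta\theta(\alpha^{-1}(\cdot),\alpha^{-1}(\cdot))\beta^{-1}$ as $\theta_{(\alpha,\beta)}$ and similarly for $\rho_{(\alpha,\beta)}$ and $\omega_{(\alpha,\beta)}$, the conditions (\ref{Iam1}), (\ref{Iam2}), (\ref{Iam3}) turn into (\ref{E4}), (\ref{E6}), (\ref{E2}) respectively, for the pair $(\omega_1,\theta_1,\rho_1)=(\omega_{(\alpha,\beta)},\theta_{(\alpha,\beta)},\rho_{(\alpha,\beta)})$ and $(\omega_2,\theta_2,\rho_2)=(\omega,\theta,\rho)$ with transition map $\tilde\varphi$. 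The converse direction is simply the reverse of the same substitution: starting from an equivalence $\tilde\varphi$ and defining $\varphi=\tilde\varphi\circ\alpha$, one recovers (\ref{Iam1})-(\ref{Iam3}), hence inducibility via Theorem \ref{EC}.

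The most delicate piece is the comparison of (\ref{Iam3}) with (\ref{E2}): one must match the nine nontrivial summands on the right hand side, carefully distinguishing between $\theta$ and $D_\theta$ (and likewise $\rho$ and $D_\rho$) according to (\ref{Lts01}), and using the Lie triple identity (\ref{DLts4}), i.e., $[a,u,v]_{\mathfrak h}+[u,a,v]_{\mathfrak h}=0$, to reconcile any residual sign asymmetries in the bracket terms involving two or three occurrences of $\tilde\varphi$. The checks for (\ref{Iam2})$\leftrightarrow$(\ref{E6}) and (\ref{Iam1})$\leftrightarrow$(\ref{E4}) are immediate by comparison and by one use of (\ref{DLts4}), respectively. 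Once this dictionary is established, the two characterizations of inducibility coincide, and the theorem follows with no further input beyond Theorem \ref{EC} and the definitions (\ref{Inc1})-(\ref{Inc4}).
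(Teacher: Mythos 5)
Your proposal is correct and follows essentially the same route as the paper: the paper likewise deduces the theorem from Theorem \ref{EC} by substituting $x=\alpha(x_0)$, $a=\beta(a_0)$, using the definitions (\ref{Inc1})--(\ref{Inc4}), and exhibiting $\varphi\alpha^{-1}$ as the transition map realizing the equivalence (\ref{E2}), (\ref{E4}), (\ref{E6}), with the converse obtained by reversing the substitution.
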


\begin{proof}
Suppose $(\alpha,\beta)\in \mathrm{Aut}(\mathfrak g)\times \mathrm{Aut}(\mathfrak h)$
is inducible, then by Theorem ~\ref{EC}, there is a linear map
$\varphi:\mathfrak g\longrightarrow \mathfrak h$ satisfying
Eqs.~~(\ref{Iam1})-(\ref{Iam3}). For all $x,y\in \mathfrak g,a\in
\mathfrak h$, there exist $x_0,y_0\in \mathfrak g,a_0\in \mathfrak
h$ such that $x=\alpha(x_0),y=\alpha(y_0),a=\beta(a_0)$. Therefore,
using Eqs.~~(\ref{Iam1}) and (\ref{Inc2}),
\begin{eqnarray*}&&
\theta_{\alpha,\beta}(x,y)a-\theta(x,y)a\\&=&\beta(\theta(\alpha^{-1}(x),\alpha^{-1}(y))\beta^{-1}(a))
-\theta(x,y)a
\\&=& \beta(\theta(x_0,y_0)a_0)-\theta(\alpha(x_0),\alpha(y_0))\beta(a_0)
\\&=&[\beta(a_0),\varphi(x_0),\varphi(y_0)]_{\mathfrak
h}-D_{\rho}(\alpha(y_0))(\beta(a_0),\varphi(x_0))+\rho(x_0)(\beta(a_0),\varphi(y_0))
\\&=&[a,\varphi\alpha^{-1}(x),\varphi\alpha^{-1}(y)]_{\mathfrak
h}-D_{\rho}(y)(a,\varphi\alpha^{-1}(x))+\rho(x)(a,\varphi\alpha^{-1}(y)),
 \end{eqnarray*}
which yields that Eq.~~(\ref{E4}) holds. Analogously,
Eqs.~~(\ref{E2}) and (\ref{E6}) hold. Hence, the non-abelian 3-cocycles
$(\omega_{(\alpha,\beta)},\theta_{(\alpha,\beta)},\rho_{(\alpha,\beta)})$
and $(\omega,\theta,\rho)$ are equivalent via the linear map
$\varphi\alpha^{-1}:\mathfrak g\longrightarrow \mathfrak h$.

The converse part can be checked similarly.

\end{proof}
\section{Wells exact sequences for Lie triple systems}
In this section, we are focus on the Wells map associated with non-abelian extensions of Lie triple systems.

Let $\mathcal{E}:0\longrightarrow\mathfrak h\stackrel{i}{\longrightarrow}
\hat{\mathfrak g}\stackrel{p}{\longrightarrow}\mathfrak
g\longrightarrow0$
be a non-abelian extension of $\mathfrak
g$ by $\mathfrak
h$ with a section $s$ of $p$.
Suppose that $(\omega,\theta,\rho)$ is the corresponding non-abelian 3-cocycle
induced by $s$.
Define a linear map $W:\mathrm{Aut}(\mathfrak
g)\times \mathrm{Aut}(\mathfrak
h)\longrightarrow H^{3}_{nab}(\mathfrak
g,\mathfrak
h)$ by
\begin{equation}\label{W1}
	W(\alpha,\beta)=[(\omega_{(\alpha,\beta)},\theta_{(\alpha,\beta)},
\rho_{(\alpha,\beta)})
-(\omega,\theta,\rho)].
\end{equation}
The map $W$ is called the Wells map. In view of  Lemma \ref{Le1},
the Wells map $W$ does not depend on the choice of sections. 

 \begin{thm} \label{Wm3} Assume that
$\mathcal{E}:0\longrightarrow\mathfrak h\stackrel{i}{\longrightarrow}
\hat{\mathfrak g}\stackrel{p}{\longrightarrow}\mathfrak g\longrightarrow0$
is a non-abelian extension of $\mathfrak g$ by $\mathfrak h$ with a section $s$ of $p$.
Then there is an exact sequence:
$$1\longrightarrow \mathrm{Aut}_{\mathfrak h}^{\mathfrak g}(\hat{\mathfrak g})\stackrel{H}{\longrightarrow} \mathrm{Aut}_{\mathfrak h}(\hat{\mathfrak g})\stackrel{\lambda}{\longrightarrow}\mathrm{Aut}(\mathfrak g)\times \mathrm{Aut}(\mathfrak h)\stackrel{W}{\longrightarrow} H^{3}_{nab}(\mathfrak g,\mathfrak h),$$
where $\mathrm{Aut}_{\mathfrak h}^{\mathfrak g}(\hat{\mathfrak g})=\{\gamma \in \mathrm{Aut}(\hat{\mathfrak g})| \lambda(\gamma)=(I_{\mathfrak g},I_{\mathfrak h}) \}$.\end{thm}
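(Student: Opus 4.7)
The plan is to verify exactness at each of the three positions in the sequence, leveraging the characterization of inducibility already established in Theorem \ref{Eth1}. Since $H$ is just the inclusion of $\mathrm{Aut}_{\mathfrak h}^{\mathfrak g}(\hat{\mathfrak g})$ into $\mathrm{Aut}_{\mathfrak h}(\hat{\mathfrak g})$, injectivity at the first position is immediate. Exactness at $\mathrm{Aut}_{\mathfrak h}(\hat{\mathfrak g})$ is essentially by definition: an automorphism $\gamma \in \mathrm{Aut}_{\mathfrak h}(\hat{\mathfrak g})$ lies in $\mathrm{Ker}(\lambda)$ precisely when $\lambda(\gamma) = (I_{\mathfrak g}, I_{\mathfrak h})$, which is the defining condition of $\mathrm{Aut}_{\mathfrak h}^{\mathfrak g}(\hat{\mathfrak g})$, and this coincides with $\mathrm{Im}(H)$.

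The substantive step is exactness at $\mathrm{Aut}(\mathfrak g) \times \mathrm{Aut}(\mathfrak h)$. First I would observe that, by definition, $(\alpha,\beta) \in \mathrm{Im}(\lambda)$ if and only if $(\alpha,\beta)$ is inducible. I would then invoke Theorem \ref{Eth1}, which asserts that $(\alpha,\beta)$ is inducible if and only if the non-abelian $3$-cocycles $(\omega,\theta,\rho)$ and $(\omega_{(\alpha,\beta)}, \theta_{(\alpha,\beta)}, \rho_{(\alpha,\beta)})$ are equivalent. By the definition of $H^{3}_{nab}(\mathfrak g,\mathfrak h)$ as the quotient of $Z^{3}_{nab}(\mathfrak g,\mathfrak h)$ by the equivalence relation introduced in Section 3, this equivalence is precisely the statement that
$$W(\alpha,\beta)= [(\omega_{(\alpha,\beta)},\theta_{(\alpha,\beta)},\rho_{(\alpha,\beta)}) - (\omega,\theta,\rho)] = 0$$
in $H^{3}_{nab}(\mathfrak g,\mathfrak h)$. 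This gives $\mathrm{Im}(\lambda) = \mathrm{Ker}(W)$.

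Before concluding, I would record a few well-definedness checks: that $\lambda$ is a group homomorphism (which amounts to showing $\overline{\gamma_1 \gamma_2} = \bar\gamma_1 \bar\gamma_2$ and that the restriction to $\mathfrak h$ is multiplicative, both straightforward from $p$ being a Lie triple system morphism with kernel $\mathfrak h$), and that the Wells map $W$ is independent of the chosen section, which is covered by Lemma \ref{Le1} as noted in the paragraph preceding the theorem.

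The main conceptual obstacle is already absorbed into Theorem \ref{Eth1}: the construction of the auxiliary linear map $\varphi:\mathfrak g \to \mathfrak h$ witnessing the equivalence of cocycles from an inducing automorphism $\gamma$ (and vice versa). Given that result, the remaining work here is purely formal bookkeeping, so I do not anticipate any further technical difficulty beyond carefully tracing the definitions through each exactness check.
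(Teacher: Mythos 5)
Your proposal is correct and follows essentially the same route as the paper: the paper's proof likewise notes that $H$ is injective with $\mathrm{Ker}\,\lambda=\mathrm{Im}\,H$ immediate from the definitions, and derives $\mathrm{Ker}\,W=\mathrm{Im}\,\lambda$ directly from Theorem \ref{Eth1}. Your additional well-definedness remarks (that $\lambda$ is a homomorphism and that $W$ is section-independent via Lemma \ref{Le1}) are consistent with what the paper records before stating the theorem.
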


\begin{proof} Obviously, $\mathrm{Ker} \lambda=\mathrm{Im}H$ and $H$ is injective.
By Theorem \ref{Eth1}, one can easily check that $\mathrm{Ker} W=\mathrm{Im}\lambda$. This completes the proof.
\end{proof}
Let
$\mathcal{E}:0\longrightarrow\mathfrak h\stackrel{i}{\longrightarrow}
\hat{\mathfrak g}\stackrel{p}{\longrightarrow}\mathfrak g\longrightarrow0$
be a non-abelian extension of $\mathfrak g$ by $\mathfrak h$  with a section $s$ of $p$. Assume that
$(\omega,\theta,\rho)$ is the non-abelian 3-cocycle corresponding to $\mathcal{E}$ induced by $s$.
Denote
\begin{align}
		Z_{nab}^{1}(\mathfrak g,\mathfrak h)=&\left\{\varphi:\mathfrak g\rightarrow \mathfrak h\left|
\begin{aligned}&
    [a,\varphi(x),b]_{\mathfrak
     h}=0,
     \\&[a,\varphi(x),\varphi(y)]_{\mathfrak h}-D_{\rho}(y)(a,\varphi(x))+\rho(x)(a,\varphi(y))=0,
    \\&D_{\rho}(z)(\varphi(x),\varphi(y))-\rho(y)(\varphi(x),\varphi(z))
     -\theta(y,z)\varphi(x)\\&+\rho(x)(\varphi(y),\varphi(z))+\theta(x,z)\varphi(y)
-D_{\theta}(x,y)\varphi(z)\\&=[\varphi(x),\varphi(y),\varphi(z)]_{\mathfrak
     h}-\varphi([x,y,z]_{\mathfrak g}),~\forall~x,y,z\in {\mathfrak g},a,b\in {\mathfrak h}
     \end{aligned}\right.\right\}.\label{W5}
	\end{align}
It is easy to check that $Z_{nab}^{1}(\mathfrak g,\mathfrak h)$ is an abelian group, which is called a non-abelian 1-cocycle on $\mathfrak g$ with values in $\mathfrak h$.
  \begin{pro} \label{Wm4} With the above notations, we have

(i) The linear map $K:\mathrm{Ker} \lambda\longrightarrow Z_{nab}^{1}(\mathfrak g,\mathfrak h)$ given by
 \begin{equation}\label{W6}K(\gamma)(x)=\varphi_{\gamma}(x)=s(x)-\gamma s(x),~\forall~~\gamma\in \mathrm{Ker} \lambda,x\in \mathfrak g \end{equation}
  is a homomorphism of groups.

(ii) $K$ is an isomorphism, that is, $\mathrm{Ker}\lambda\simeq Z_{nab}^{1}(\mathfrak g,\mathfrak h)$.

\end{pro}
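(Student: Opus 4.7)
The plan is to observe that Proposition~\ref{Wm4} is essentially the identity-pair specialization of Theorem~\ref{EC}: with $(\alpha, \beta) = (I_{\mathfrak g}, I_{\mathfrak h})$, equations (\ref{Iam1})--(\ref{Iam3}) collapse to precisely the three defining conditions of $Z_{nab}^{1}(\mathfrak g, \mathfrak h)$ in (\ref{W5}). So the main triple-bracket calculations can be imported from the proof of Theorem~\ref{EC}; what remains is to check the group-theoretic content.

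For part (i), well-definedness of $K$ is immediate: since $\bar\gamma = I_{\mathfrak g}$, we have $p(s(x) - \gamma s(x)) = x - \bar\gamma(x) = 0$, so $\varphi_\gamma(x) \in \ker p \cong \mathfrak h$. To show $\varphi_\gamma \in Z_{nab}^{1}(\mathfrak g, \mathfrak h)$, I apply the Lie triple system homomorphism $\gamma$ to each side of (\ref{C2}) and to $[s(x), s(y), s(z)]_{\hat{\mathfrak g}}$, substitute $\gamma s(u) = s(u) - \varphi_\gamma(u)$ together with $\gamma|_{\mathfrak h} = I_{\mathfrak h}$, and re-expand using (\ref{C1})--(\ref{C3}); the resulting three identities are exactly the conditions in (\ref{W5}), which is the $(\alpha,\beta) = (I,I)$ instance of the forward calculation in Theorem~\ref{EC}. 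For the homomorphism property, given $\gamma_1, \gamma_2 \in \mathrm{Ker}\lambda$ I compute $\gamma_1\gamma_2 s(x) = \gamma_1(s(x) - \varphi_{\gamma_2}(x)) = \gamma_1 s(x) - \varphi_{\gamma_2}(x) = s(x) - \varphi_{\gamma_1}(x) - \varphi_{\gamma_2}(x)$, using $\gamma_1|_{\mathfrak h} = I_{\mathfrak h}$; subtracting from $s(x)$ yields $\varphi_{\gamma_1\gamma_2} = \varphi_{\gamma_1} + \varphi_{\gamma_2}$, so $K(\gamma_1\gamma_2) = K(\gamma_1) + K(\gamma_2)$.

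For part (ii), injectivity is immediate: $\varphi_\gamma \equiv 0$ forces $\gamma s = s$, and combined with $\gamma|_{\mathfrak h} = I_{\mathfrak h}$ and the decomposition $\hat{\mathfrak g} = \mathfrak h \oplus s(\mathfrak g)$ this yields $\gamma = I_{\hat{\mathfrak g}}$. For surjectivity, given $\varphi \in Z_{nab}^{1}(\mathfrak g, \mathfrak h)$ I define $\gamma: \hat{\mathfrak g} \to \hat{\mathfrak g}$ by $\gamma(a + s(x)) = a + s(x) - \varphi(x)$; this map is bijective with inverse $\hat w \mapsto \hat w + \varphi(p(\hat w))$, manifestly satisfies $\gamma|_{\mathfrak h} = I_{\mathfrak h}$ and $p\gamma s = I_{\mathfrak g}$, and gives $K(\gamma) = \varphi$ by construction. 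The main obstacle is verifying that this $\gamma$ is a Lie triple system automorphism, since once that is done we automatically have $\gamma \in \mathrm{Aut}_{\mathfrak h}(\hat{\mathfrak g}) \cap \mathrm{Ker}\lambda$. I would do this by expanding both sides of $\gamma([a + s(x), b + s(y), c + s(z)]_{\hat{\mathfrak g}}) = [\gamma(a + s(x)), \gamma(b + s(y)), \gamma(c + s(z))]_{\hat{\mathfrak g}}$ via (\ref{C1})--(\ref{C3}) and splitting into homogeneous components: the purely $\mathfrak h$-part cancels automatically, while the components of mixed degrees $\mathfrak h^2 \mathfrak g$, $\mathfrak h\mathfrak g^2$ and $\mathfrak g^3$ reduce precisely to the three conditions in (\ref{W5}). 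This is the $(\alpha, \beta) = (I, I)$ case of the converse calculation already performed in Theorem~\ref{EC}, so no additional computation is required.
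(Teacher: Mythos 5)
Your proposal is correct and follows essentially the same route as the paper: the paper likewise verifies the cocycle conditions for $\varphi_\gamma$ by direct expansion via (\ref{C1})--(\ref{C3}), proves the homomorphism property by the identical computation $\gamma_1\gamma_2 s(x)=\gamma_1 s(x)-\varphi_{\gamma_2}(x)$, and for surjectivity defines the same map $\gamma(a+s(x))=a+s(x)-\varphi(x)$ and explicitly defers the automorphism check to ``the converse part of Theorem~\ref{EC}.'' Your framing of the whole statement as the $(\alpha,\beta)=(I_{\mathfrak g},I_{\mathfrak h})$ specialization of Theorem~\ref{EC} is just a slightly more systematic packaging of what the paper does.
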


\begin{proof}
(i)
	Using Eqs.~(\ref{C1})-(\ref{C3}) and (\ref{W6}), for all $x,y,z\in \mathfrak g$, we have,
\begin{align*}&[\varphi_{\gamma}(x),\varphi_{\gamma}(y),\varphi_{\gamma}(z)]_{\mathfrak h}-D_{\rho}(z)(\varphi_{\gamma}(x),\varphi_{\gamma}(y))+\rho(y)(\varphi_{\gamma}(x),\varphi_{\gamma}(z))+
\theta(y,z)\varphi_{\gamma}(x)\\&-\rho(x)(\varphi_{\gamma}(y),\varphi_{\gamma}(z))-\theta(x,z)\varphi_{\gamma}(y)
+D_{\theta}(x,y)\varphi_{\gamma}(z)-\varphi_{\gamma}(\{x,y,z\}_{\mathfrak g})
\\=&[s(x)-\gamma s(x),s(y)-\gamma s(y),s(z)-\gamma s(z)]_{\hat{\mathfrak g}}-[s(x)-\gamma s(x),s(y)-\gamma s(y),s(z)]_{\hat{\mathfrak g}}
\\&+[s(y),s(x)-\gamma s(x),s(z)-\gamma s(z)]_{\hat{\mathfrak g}}+[s(x)-\gamma s(x),s(y),s(z)]_{\hat{\mathfrak g}}
-[s(x),s(y)-\gamma s(y),s(z)-\gamma s(z)]_{\hat{\mathfrak g}}\\&-[s(y)-\gamma s(y),s(x),s(z)]_{\hat{\mathfrak g}}
+[s(x),s(y),s(z)-\gamma s(z)]_{\hat{\mathfrak g}}+\gamma s([x,y,z]_{\mathfrak g})-s([x,y,z]_{\mathfrak g})
\\=&\gamma s([x,y,z]_{\mathfrak g})-[\gamma s(x),\gamma s(y),\gamma s(z)]_{\hat{\mathfrak g}}+[s(x),s(y),s(z)]_{\hat{\mathfrak g}}-s([x,y,z]_{\mathfrak g})
\\=&\omega(x, y,z)-\gamma \omega(x, y,z)
\\=&0.
\end{align*}
Analogously, we can check that $\varphi_{\gamma}$ satisfies the other identities in $ Z_{nab}^{1}(\mathfrak g,\mathfrak h)$.
Thus, $K$ is well-defined.
For any $\gamma_1,\gamma_2\in \mathrm{Ker} \lambda$ and $x\in \mathfrak g$, suppose $K(\gamma_1)=\varphi_{\gamma_1}$ and $K(\gamma_2)=\varphi_{\gamma_2}$.
By Eq.~~ (\ref{W6}), we have
\begin{align*}K(\gamma_1 \gamma_2)(x)&=s(x)-\gamma_1 \gamma_2s(x)
\\&=s(x)-\gamma_1(s(x)-\varphi_{\gamma_2}(x))
\\&=s(x)-\gamma_1s(x)+\gamma_{1}\varphi_{\gamma_2}(x)
\\&=\varphi_{\gamma_1}(x)+\varphi_{\gamma_2}(x)\end{align*}
which means that $K(\gamma_1 \gamma_2)=K(\gamma_1)+K( \gamma_2)$ is a homomorphism of groups.

(ii) For all $\gamma\in \mathrm{Ker}\lambda$, we obtain that $\lambda(\gamma)=(p\gamma s,\gamma|_{\mathfrak h})=(I_\mathfrak g,I_\mathfrak h)$.
 If $K(\gamma)=\varphi_{\gamma}=0$,
we can get $\varphi_{\gamma}(x)=s(x)-\gamma s(x)=0$, that is, $\gamma=I_{\hat{\mathfrak g}}$, which indicates that $K$ is injective.
Secondly, we prove that $K$ is surjective. Since all $\hat{x}\in \hat{\mathfrak g}$ can be written as $a+s(x)$ for
 some $a\in \mathfrak h, x\in \mathfrak g$, for any $\varphi\in Z_{nab}^{1}(\mathfrak g,\mathfrak h)$, we
 define a linear map $\gamma:\hat{\mathfrak g}\rightarrow \hat{\mathfrak g}$ by
  \begin{equation}\label{W7}\gamma(\hat{x})=\gamma(a+s(x))=s(x)-\varphi(x)+a,~\forall~\hat{x}\in \hat{\mathfrak g}.\end{equation}
It is obviously that $(p\gamma s,\gamma|_{\mathfrak h})=(I_\mathfrak g,I_\mathfrak h)$.
We need to verify that $\gamma $ is an automorphism of Lie triple system $\hat{\mathfrak g}$. One can take the same
procedure as the proof of the converse part of Theorem \ref{EC}.
 It follows that $\gamma\in \mathrm{Ker} \lambda$.
Thus, $K$ is surjective. In all, $K$ is bijective.
 So $\mathrm{Ker }\lambda\simeq Z_{nab}^{1}(\mathfrak g,\mathfrak h)$.

\end{proof}

Combining Theorem \ref{Wm3} and Proposition \ref{Wm4}, we have

\begin{thm} \label{Wm5} Let
$\mathcal{E}:0\longrightarrow\mathfrak h\stackrel{i}{\longrightarrow}
\hat{\mathfrak g}\stackrel{p}{\longrightarrow}\mathfrak g\longrightarrow0$
be a non-abelian extension of $\mathfrak g$ by $\mathfrak h$. There is an exact sequence:
$$0\longrightarrow Z_{nab}^{1}(\mathfrak g,\mathfrak h)\stackrel{i}{\longrightarrow} \mathrm{Aut}_{\mathfrak h}(\hat{\mathfrak g})\stackrel{\lambda}{\longrightarrow}\mathrm{Aut}(\mathfrak g)\times \mathrm{Aut}(\mathfrak h)\stackrel{W}{\longrightarrow} H^{3}_{nab}(\mathfrak g,\mathfrak h).$$
\end{thm}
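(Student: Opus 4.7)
The plan is to obtain the sequence by splicing the isomorphism of Proposition \ref{Wm4}(ii) into the exact sequence of Theorem \ref{Wm3}. Theorem \ref{Wm3} already supplies
$$1\longrightarrow \mathrm{Aut}_{\mathfrak h}^{\mathfrak g}(\hat{\mathfrak g})\stackrel{H}{\longrightarrow} \mathrm{Aut}_{\mathfrak h}(\hat{\mathfrak g})\stackrel{\lambda}{\longrightarrow}\mathrm{Aut}(\mathfrak g)\times \mathrm{Aut}(\mathfrak h)\stackrel{W}{\longrightarrow} H^{3}_{nab}(\mathfrak g,\mathfrak h),$$
with $H$ injective, $\mathrm{Im}\,H=\mathrm{Ker}\,\lambda$, and $\mathrm{Im}\,\lambda=\mathrm{Ker}\,W$. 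The only remaining move is to replace the first non-trivial term.

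First I would invoke Proposition \ref{Wm4}(ii), which produces a group isomorphism $K:\mathrm{Aut}_{\mathfrak h}^{\mathfrak g}(\hat{\mathfrak g})\longrightarrow Z_{nab}^{1}(\mathfrak g,\mathfrak h)$ given by $K(\gamma)(x)=s(x)-\gamma s(x)$. Composing $H$ with $K^{-1}$ yields a group homomorphism $i:Z_{nab}^{1}(\mathfrak g,\mathfrak h)\longrightarrow \mathrm{Aut}_{\mathfrak h}(\hat{\mathfrak g})$ sending $\varphi$ to the automorphism $\gamma_{\varphi}$ defined, via the section $s$, by $\gamma_{\varphi}(a+s(x))=s(x)-\varphi(x)+a$ (this formula is exactly the one used in \eqref{W7} in the proof of Proposition \ref{Wm4}(ii)). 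Since $K$ is an isomorphism, $i=H\circ K^{-1}$ is injective, which accounts for exactness at $Z_{nab}^{1}(\mathfrak g,\mathfrak h)$ (the $0$ on the left simply reflects that we are regarding $Z_{nab}^{1}(\mathfrak g,\mathfrak h)$ in its natural abelian-group notation).

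Next I would verify exactness at $\mathrm{Aut}_{\mathfrak h}(\hat{\mathfrak g})$: because $K$ is surjective, $\mathrm{Im}\,i=\mathrm{Im}\,H$, and by Theorem \ref{Wm3} this equals $\mathrm{Ker}\,\lambda$. Exactness at $\mathrm{Aut}(\mathfrak g)\times\mathrm{Aut}(\mathfrak h)$ is unchanged from Theorem \ref{Wm3}, where it was deduced from Theorem \ref{Eth1} (a pair $(\alpha,\beta)$ lies in the image of $\lambda$ iff $(\omega,\theta,\rho)$ and $(\omega_{(\alpha,\beta)},\theta_{(\alpha,\beta)},\rho_{(\alpha,\beta)})$ are equivalent, which is exactly $W(\alpha,\beta)=0$).

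There is really no substantial obstacle, as the argument is entirely formal given Theorem \ref{Wm3} and Proposition \ref{Wm4}. The only point requiring mild care is checking that the map $i$ obtained by transporting $H$ through $K^{-1}$ is well-defined independently of the choice of section $s$; this follows because $W$ is section-independent (as noted after \eqref{W1}) and $\mathrm{Ker}\,\lambda$ is an intrinsic subgroup, so the image $\mathrm{Im}\,i$ is canonical, even though the explicit formula for $\gamma_\varphi$ uses $s$.
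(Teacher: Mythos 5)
Your proposal is correct and matches the paper's argument exactly: the paper also obtains Theorem \ref{Wm5} by combining Theorem \ref{Wm3} with the isomorphism $K:\mathrm{Ker}\,\lambda\simeq Z_{nab}^{1}(\mathfrak g,\mathfrak h)$ of Proposition \ref{Wm4}, replacing the first term of the four-term sequence accordingly. You have merely spelled out the splicing step in more detail than the paper, which states the conclusion as an immediate consequence.
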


\section{Particular case: abelian extensions of Lie triple systems}
 In this section, we interpret the results of previous section in particular case.

Let
$\mathcal{E}:0\longrightarrow\mathfrak h\stackrel{i}{\longrightarrow} \hat{\mathfrak g}\stackrel{p}{\longrightarrow}\mathfrak g\longrightarrow0$
be an abelian extension of $\mathfrak g$ by $\mathfrak h$ with a section $s$ of $p$.
Suupose that $\omega$ is a 3-cocycle corresponding to $\mathcal{E}$ induced by $s$.
We have,
\begin{thm}[\cite{35}]
	(i) The triple $(\mathfrak g\oplus \mathfrak h,[\ , \ , \  ]_{(\omega,\theta)})$ is a Lie triple system
 if and only if $\omega$ is a 3-cocycle of $\mathfrak g$ with coefficients in the
representation $(\mathfrak h,\theta)$.

(ii) Abelian extensions of $\mathfrak g$
 by $\mathfrak h$ are classified
by the cohomology group $H^{3}(\mathfrak g,\mathfrak h)$ of
$\mathfrak g$ with coefficients in $(\mathfrak h,\theta)$.

\end{thm}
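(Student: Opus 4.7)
The plan is to derive this theorem by specializing the non-abelian classification of Section 3 to the case where $\mathfrak h$ is an abelian ideal of $\hat{\mathfrak g}$, so that no new cohomological machinery is required.

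First, I would unpack the abelianness hypothesis. When $\mathfrak h$ is an abelian ideal of $\hat{\mathfrak g}$, the defining conditions $[\mathfrak h,\mathfrak h,\hat{\mathfrak g}]_{\hat{\mathfrak g}}=[\hat{\mathfrak g},\mathfrak h,\mathfrak h]_{\hat{\mathfrak g}}=0$ force the section-induced maps $\rho_s(x)(a,b)=[s(x),a,b]_{\hat{\mathfrak g}}$ and $D_{\rho_s}(x)(a,b)=[a,b,s(x)]_{\hat{\mathfrak g}}$ of (\ref{C2})–(\ref{C3}) to vanish identically, and they also give $[\ ,\ ,\ ]_{\mathfrak h}=0$. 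This is the content of Remark \ref{Rk1}, and shows that the associated non-abelian $3$-cocycle $(\omega_s,\theta_s,\rho_s)$ degenerates to the pair $(\omega_s,\theta_s)$ with $\rho_s=0$.

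For part (i), I would specialize Proposition \ref{LY} by setting $\rho=0$ and $[\ ,\ ,\ ]_{\mathfrak h}=0$. The identities (\ref{Lts4})–(\ref{Lts11}), which all involve either $\rho$ or the bracket on $\mathfrak h$, reduce to the trivial equality $0=0$; identities (\ref{Lts2}) and (\ref{Lts3}) become exactly (\ref{RLts2}) and (\ref{RLts1}), so together with (\ref{Lts01}) (which restates (\ref{RLts3})) they are equivalent to $(\mathfrak h,\theta)$ being a representation of $\mathfrak g$; finally (\ref{Lts1}), combined with (\ref{Lts02}) and the cochain conditions (\ref{CO1})–(\ref{CO2}), is precisely the Yamaguti cocycle identity $\delta\omega=0$ with $n=2$. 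Hence $(\mathfrak g\oplus \mathfrak h,[\ ,\ ,\ ]_{(\omega,\theta)})$ is a Lie triple system iff $\omega\in Z^{3}(\mathfrak g,\mathfrak h)$.

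For part (ii), I would fix the representation $(\mathfrak h,\theta)$ and invoke the non-abelian classification $\mathcal{E}_{nab}(\mathfrak g,\mathfrak h)\simeq H^{3}_{nab}(\mathfrak g,\mathfrak h)$ already proved in Section 3. In the abelian setting the equivalence relation (\ref{E2}), (\ref{E4}), (\ref{E6}) collapses: (\ref{E6}) is automatic, (\ref{E4}) forces $\theta_{1}=\theta_{2}$ so that the representation is fixed, and upon dropping the $\rho$- and $[\ ,\ ,\ ]_{\mathfrak h}$-terms the right-hand side of (\ref{E2}) reduces exactly to the Yamaguti coboundary $(\delta\varphi)(x,y,z)$ of the $1$-cochain $\varphi\in C^{1}(\mathfrak g,\mathfrak h)$. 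Therefore two abelian $3$-cocycles are equivalent in the non-abelian sense iff they differ by a Yamaguti coboundary, and the classification passes to $H^{3}(\mathfrak g,\mathfrak h)$. The only real bookkeeping obstacle is matching the collapsed (\ref{E2}) term-by-term with the explicit formula for $\delta:C^{1}(\mathfrak g,\mathfrak h)\to C^{3}(\mathfrak g,\mathfrak h)$ from Section 2, which is a direct comparison rather than a new argument.
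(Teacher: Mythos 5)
Your proposal is correct, but it is worth noting that the paper itself offers no proof of this statement at all: it is imported verbatim from \cite{35} as known background, and Section 7 merely asserts that it is the ``particular case'' of the non-abelian theory. What you do differently is make that assertion precise: you derive the theorem by specializing Proposition \ref{LY} and the classification $\mathcal{E}_{nab}(\mathfrak g,\mathfrak h)\simeq H^{3}_{nab}(\mathfrak g,\mathfrak h)$ to $\rho=0$ and $[\;,\;,\;]_{\mathfrak h}=0$, exactly in the spirit of Remark \ref{Rk1}. Your reductions check out: (\ref{Lts4})--(\ref{Lts11}) do become vacuous, (\ref{Lts2})--(\ref{Lts3}) with $\rho=0$ are literally (\ref{RLts2}) and (\ref{RLts1}), (\ref{Lts1}) is $\delta\omega=0$ for $n=2$, and the collapsed (\ref{E2}) is $-(\delta\varphi)(x,y,z)$, so equivalence of cocycles becomes cohomologousness. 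This buys a self-contained proof inside the paper's own framework, at the cost of resting on the (stated but unproved) case analysis of Proposition \ref{LY}; the paper's route buys brevity by outsourcing to \cite{35}. The only point you should spell out a little more in part (ii) is the two-way compatibility of the correspondence with abelianness: that a section of an abelian extension yields $\rho_s=0$ (so the induced datum really is a pair $(\omega,\theta)$ with $(\mathfrak h,\theta)$ a representation), and conversely that in $\mathfrak g\oplus_{(\omega,\theta)}\mathfrak h$ the subspace $\mathfrak h$ is an abelian ideal --- both are immediate from (\ref{NLts}) with $\rho=0$ and $[\;,\;,\;]_{\mathfrak h}=0$, but they are needed for the restricted bijection to land on $H^{3}(\mathfrak g,\mathfrak h)$ for the \emph{fixed} representation $(\mathfrak h,\theta)$.
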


The following statement can be got directly by Theorem \ref{EC}.
\begin{thm} Let $\mathcal{E}:0\longrightarrow\mathfrak h\stackrel{i}{\longrightarrow} \hat{\mathfrak g}
\stackrel{p}{\longrightarrow}\mathfrak g\longrightarrow0$ be an abelian extension of $\mathfrak g$
by $\mathfrak h$ with a section $s$ of $p$. Assume that $\omega$ is a 3-cocycle
and $(\mathfrak h,\theta)$ is a representation of $\mathfrak g$
associated to $\mathcal{E}$. A pair $(\alpha,\beta)\in \mathrm{Aut}(\mathfrak g
)\times \mathrm{Aut}(\mathfrak h)$ is inducible with respect to the abelian extension $\mathcal{E}$ if and only if there is a
linear map $\varphi:\mathfrak g\longrightarrow \mathfrak h$
satisfying the following conditions:
\begin{align}
     \beta\omega(x,y,z)-\omega(\alpha(x),\alpha(y),\alpha(z))=&\nonumber
\theta(\alpha(x),\alpha(z))\varphi(y)-\theta(\alpha(y),\alpha(z))\varphi(x)
\\&\label{AEE1}-D_{\theta}(\alpha(x),\alpha(y))\varphi(z)+\varphi([x,y,z]_{\mathfrak
     g}),
\end{align}
\begin{equation}\label{AEE2}
     \beta(\theta(x,y)a)=\theta(\alpha(x),\alpha(y))\beta(a) .
\end{equation}
\end{thm}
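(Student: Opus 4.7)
The plan is to obtain the theorem as an immediate specialization of Theorem \ref{EC}, the non-abelian inducibility criterion, to the abelian setting. The essential observation is that in an abelian extension the image $\mathfrak h\subseteq \hat{\mathfrak g}$ is an abelian ideal, which forces two structural simplifications: the bracket $[\,\cdot\,,\,\cdot\,,\,\cdot\,]_{\mathfrak h}$ restricted to $\mathfrak h$ vanishes, and the map $\rho_s$ defined by $\rho_s(x)(a,b)=[s(x),a,b]_{\hat{\mathfrak g}}$ is identically zero (and hence so is $D_{\rho_s}$). This is exactly what is recorded in Remark~\ref{Rk1}: $\rho_s=0$, and the non-abelian $3$-cocycle $(\omega_s,\theta_s,\rho_s)$ collapses to the ordinary $3$-cocycle $\omega$ together with the representation $\theta$.

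With these vanishings in hand, I would simply substitute $\rho=0$, $D_\rho=0$, and $[\,\cdot\,,\,\cdot\,,\,\cdot\,]_{\mathfrak h}=0$ into the three conditions \eqref{Iam1}, \eqref{Iam2}, \eqref{Iam3} from Theorem \ref{EC}. First, \eqref{Iam1} degenerates to
\[
\beta(\theta(x,y)a)-\theta(\alpha(x),\alpha(y))\beta(a)=0,
\]
which is exactly the required identity \eqref{AEE2}. Second, \eqref{Iam2} becomes $0=0$ and is automatically fulfilled, so it imposes no constraint in the abelian setting. Third, in \eqref{Iam3} all terms involving $\rho$, $D_\rho$, and $[\,\cdot\,,\,\cdot\,,\,\cdot\,]_{\mathfrak h}$ drop out, leaving precisely
\[
\beta\omega(x,y,z)-\omega(\alpha(x),\alpha(y),\alpha(z))=\theta(\alpha(x),\alpha(z))\varphi(y)-\theta(\alpha(y),\alpha(z))\varphi(x)-D_{\theta}(\alpha(x),\alpha(y))\varphi(z)+\varphi([x,y,z]_{\mathfrak g}),
\]
which is condition \eqref{AEE1}.

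Thus both directions of the desired equivalence follow: $(\alpha,\beta)$ is inducible if and only if the non-abelian inducibility obstruction given by \eqref{Iam1}--\eqref{Iam3} is met, and for an abelian extension this obstruction simplifies exactly to \eqref{AEE1}--\eqref{AEE2}. No additional computation is needed beyond invoking Theorem \ref{EC} and specializing via Remark \ref{Rk1}. The only step that requires a moment of care is verifying that $\rho_s$ truly vanishes on an abelian ideal, since $\rho_s(x)(a,b)=[s(x),a,b]_{\hat{\mathfrak g}}$ involves $s(x)\in\hat{\mathfrak g}$ and $a,b\in\mathfrak h$; this follows from the definition of abelian ideal $[\hat{\mathfrak g},\mathfrak h,\mathfrak h]=0$ together with Eq.~\eqref{DLts4} to rearrange the entries if needed. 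Once this is noted, the proof is a direct substitution, and no genuine obstacle arises.
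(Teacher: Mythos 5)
Your proposal is correct and is exactly the route the paper takes: the paper states this theorem as an immediate consequence of Theorem \ref{EC}, obtained by specializing to the abelian case where (as in Remark \ref{Rk1}) $\rho_s$, $D_{\rho_s}$ and the bracket on $\mathfrak h$ all vanish, so that \eqref{Iam1} becomes \eqref{AEE2}, \eqref{Iam2} is vacuous, and \eqref{Iam3} becomes \eqref{AEE1}. Your write-up merely makes explicit the substitution the paper leaves implicit.
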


According to Eqs.~(\ref{RLts3}) and (\ref{AEE2}), we have
\begin{equation*}\beta D_{\theta}(x,y)a=D_{\theta}(\alpha(x),\alpha(y))\beta(a).
\end{equation*}

For all $(\alpha,\beta)\in \mathrm{Aut}(\mathfrak g
)\times \mathrm{Aut}(\mathfrak h)$, $\omega_{(\alpha,\beta)}$ may not be a 3-cocycle.
 Indeed,  $\omega_{(\alpha,\beta)}$ is a 3-cocycle if Eq.~(\ref{AEE2}) holds.
 Thus, it is natural to introduce the space of compatible pairs of automorphisms:
\begin{align*}
		C_{\theta}=&\left\{(\alpha,\beta)\in \mathrm{Aut}(\mathfrak g
)\times \mathrm{Aut}(\mathfrak h)\left|\begin{aligned}&\beta(\theta(x,y)a)=\theta(\alpha(x),\alpha(y))\beta(a),
~\forall~x,y\in {\mathfrak g},a\in {\mathfrak h}
     \end{aligned}\right.\right\}.
	\end{align*}

Analogous to Theorem \ref{Eth1}, we have

\begin{thm} \label{Wm6} Let $\mathcal{E}:0\longrightarrow\mathfrak h\stackrel{i}{\longrightarrow} \hat{\mathfrak g}
\stackrel{p}{\longrightarrow}\mathfrak g\longrightarrow0$ be an abelian extension of $\mathfrak g$
by $\mathfrak h$ with a section $s$ of $p$ and $\omega$ be a
3-cocycle associated to $\mathcal{E}$ induced by $s$. A pair $(\alpha,\beta)\in C_{\theta}$ is inducible with
  respect to the abelian extension
$\mathcal{E}$ if and only if  $\omega$ and $\omega_{(\alpha,\beta)}$ are in the same cohomological class.
\end{thm}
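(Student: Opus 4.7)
The plan is to derive Theorem \ref{Wm6} by specialising Theorem \ref{Eth1} to the abelian case. Since $\mathfrak{h}$ is an abelian ideal of $\hat{\mathfrak{g}}$, Remark \ref{Rk1} gives $\rho_s = 0$ for the map defined in (\ref{C2}), and moreover the induced triple bracket $[\cdot,\cdot,\cdot]_{\mathfrak{h}}$ on $\mathfrak{h}$ vanishes identically. Hence the non-abelian 3-cocycle attached to $\mathcal{E}$ is $(\omega, \theta, 0)$, and the $(\alpha,\beta)$-twist produced in Section 5 is $(\omega_{(\alpha,\beta)}, \theta_{(\alpha,\beta)}, 0)$. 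By Theorem \ref{Eth1}, inducibility of $(\alpha,\beta)$ is equivalent to the existence of a linear map $\varphi : \mathfrak{g} \to \mathfrak{h}$ for which the three equivalence conditions (\ref{E2}), (\ref{E4}), (\ref{E6}) hold between these two triples.

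I would then examine each of these three conditions under $\rho = 0$ and $[\cdot,\cdot,\cdot]_{\mathfrak{h}} = 0$. Equation (\ref{E6}) becomes tautologically $0 = 0$. Equation (\ref{E4}) collapses to $\theta_{(\alpha,\beta)}(x,y)a = \theta(x,y)a$; unravelling the definition (\ref{Inc2}) via the substitution $x = \alpha(x_0)$, $y = \alpha(y_0)$, $a = \beta(a_0)$, this is exactly $\beta(\theta(x_0,y_0)a_0) = \theta(\alpha(x_0),\alpha(y_0))\beta(a_0)$, which is the defining condition of $(\alpha,\beta) \in C_\theta$. Thus under the standing hypothesis $(\alpha,\beta) \in C_\theta$, condition (\ref{E4}) is automatic.

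Finally, with these simplifications, equation (\ref{E2}) reads
\[
\omega(x,y,z) - \omega_{(\alpha,\beta)}(x,y,z) = \theta(x,z)\varphi(y) - D_{\theta}(x,y)\varphi(z) - \theta(y,z)\varphi(x) + \varphi([x,y,z]_{\mathfrak{g}}).
\]
A direct comparison with the Yamaguti coboundary $\delta : C^{1}(\mathfrak{g},\mathfrak{h}) \to C^{3}(\mathfrak{g},\mathfrak{h})$ recalled in Section 2, which for $\varphi \in C^{1}(\mathfrak{g},\mathfrak{h})$ takes the form
\[
(\delta\varphi)(x,y,z) = \theta(y,z)\varphi(x) - \theta(x,z)\varphi(y) + D_{\theta}(x,y)\varphi(z) - \varphi([x,y,z]_{\mathfrak{g}}),
\]
shows the right-hand side of the simplified (\ref{E2}) equals $-(\delta\varphi)(x,y,z)$. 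Hence (\ref{E2}) is equivalent to $\omega - \omega_{(\alpha,\beta)} \in B^{3}(\mathfrak{g},\mathfrak{h})$, i.e.\ $[\omega] = [\omega_{(\alpha,\beta)}]$ in $H^{3}(\mathfrak{g},\mathfrak{h})$.

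The main obstacle is essentially bookkeeping: one must match the sign conventions in the reduced (\ref{E2}) against the Yamaguti coboundary from Section 2, and also verify that $\omega_{(\alpha,\beta)}$ is genuinely a Yamaguti 3-cocycle (not merely a non-abelian one) whenever $(\alpha,\beta) \in C_\theta$, so that the statement about cohomology classes in $H^{3}(\mathfrak{g},\mathfrak{h})$ is meaningful; this latter point follows immediately from the intertwining of $\theta$ with $\alpha$ and $\beta$, together with the fact that $\omega$ itself is a 3-cocycle.
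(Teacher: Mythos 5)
Your proposal is correct and matches the paper's intended argument: the paper offers no written proof beyond the phrase ``Analogous to Theorem \ref{Eth1}'', and the specialisation you carry out (setting $\rho=0$ and $[\,\cdot,\cdot,\cdot\,]_{\mathfrak h}=0$, observing that (\ref{E6}) is vacuous, that (\ref{E4}) is exactly the $C_{\theta}$ condition, and that the reduced (\ref{E2}) says $\omega_{(\alpha,\beta)}-\omega$ is a Yamaguti coboundary) is precisely the verification the paper leaves to the reader. The only detail worth adding is that the equivalence in Theorem \ref{Eth1} is realised by $\varphi\alpha^{-1}$ rather than $\varphi$, which is harmless since $\alpha$ is invertible.
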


In the case of abelian extensions, $Z^{1}_{nab}(\mathfrak g,\mathfrak h)$
defined by (\ref{W5}) becomes to ${H}^{1}(\mathfrak g,\mathfrak h)$ given in Section 2.
 In the light of Theorem \ref{Wm5} and Theorem \ref{Wm6}, we have the following exact sequence:

\begin{thm} Let $\mathcal{E}:0\longrightarrow\mathfrak h\stackrel{i}{\longrightarrow} \hat{\mathfrak g}
\stackrel{p}{\longrightarrow}\mathfrak g\longrightarrow0$ be an abelian extension of $\mathfrak g$
by $\mathfrak h$. There is an exact sequence:
$$0\longrightarrow H^{1}(\mathfrak g,\mathfrak h)\stackrel{i}{\longrightarrow} \mathrm{Aut}_{\mathfrak h}(\hat{\mathfrak g})\stackrel{\lambda}{\longrightarrow}C_{\theta}\stackrel{W}{\longrightarrow} H^{3}(\mathfrak g,\mathfrak h).$$
\end{thm}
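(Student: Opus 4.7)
The plan is to obtain this exact sequence as a direct specialization of the non-abelian Wells sequence in Theorem \ref{Wm5}, using the abelian structure to simplify the first and last terms and to justify restricting the middle map to $C_\theta$. Recall that in the abelian setting the bracket on $\mathfrak h$ is trivial, and by Remark \ref{Rk1} the map $\rho$ coming from a section vanishes. These two facts will collapse most of the non-abelian data to its Yamaguti counterpart.

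First, I would show that $Z^{1}_{nab}(\mathfrak g,\mathfrak h)=H^{1}(\mathfrak g,\mathfrak h)$ under the abelian hypothesis. Substituting $\rho=0$ and $[\cdot,\cdot,\cdot]_{\mathfrak h}=0$ into the three defining conditions of (\ref{W5}): the first identity $[a,\varphi(x),b]_{\mathfrak h}=0$ is automatic, the second identity reduces to $0=0$, and the third identity collapses to
\[
\varphi([x,y,z]_{\mathfrak g})=\theta(y,z)\varphi(x)-\theta(x,z)\varphi(y)+D_{\theta}(x,y)\varphi(z),
\]
which is precisely the Yamaguti $1$-cocycle condition $\delta\varphi=0$. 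Since $H^{1}=Z^{1}$ by the definition recalled in Section~2, this gives the desired identification of the first term.

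Next, I would verify that $\lambda$ lands in $C_\theta\subseteq\mathrm{Aut}(\mathfrak g)\times\mathrm{Aut}(\mathfrak h)$ in the abelian case. For any $\gamma\in\mathrm{Aut}_{\mathfrak h}(\hat{\mathfrak g})$, Theorem \ref{EC} applied to the inducible pair $\lambda(\gamma)=(\alpha,\beta)$ yields (\ref{Iam1}); setting $\rho=0$ and $[\cdot,\cdot,\cdot]_{\mathfrak h}=0$ the right side of (\ref{Iam1}) vanishes, leaving exactly the compatibility condition $\beta(\theta(x,y)a)=\theta(\alpha(x),\alpha(y))\beta(a)$ that defines membership in $C_\theta$. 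Thus the middle arrow in Theorem \ref{Wm5} factors through $C_\theta$.

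Finally, I would combine these observations with the abelian characterization of inducibility (Theorem \ref{Wm6}) to establish exactness at each node. Exactness at $\mathrm{Aut}_{\mathfrak h}(\hat{\mathfrak g})$ and injectivity of $i$ are immediate from Theorem \ref{Wm5} and Proposition \ref{Wm4} together with the identification $Z^{1}_{nab}\cong H^{1}$ above. For exactness at $C_\theta$, note that for any $(\alpha,\beta)\in C_\theta$ the cocycle $\omega_{(\alpha,\beta)}$ is again a Yamaguti $3$-cocycle (here is where we need $(\alpha,\beta)\in C_\theta$ rather than an arbitrary pair), so $W(\alpha,\beta)$ lies in $H^{3}(\mathfrak g,\mathfrak h)$; by Theorem \ref{Wm6} this class vanishes iff $(\alpha,\beta)$ is inducible iff $(\alpha,\beta)\in\mathrm{Im}(\lambda)$. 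The main obstacle I expect is the careful bookkeeping in the first step—checking that every term of each identity in (\ref{W5}) either vanishes in the abelian case or reassembles into the classical Yamaguti $1$-cocycle equation—since the remaining two steps are essentially direct restrictions of results already proved in the non-abelian setting.
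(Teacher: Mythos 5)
Your proposal is correct and follows essentially the same route as the paper, which derives this theorem by specializing Theorem \ref{Wm5} via the identification $Z^{1}_{nab}(\mathfrak g,\mathfrak h)=H^{1}(\mathfrak g,\mathfrak h)$ and invoking Theorem \ref{Wm6} for exactness at $C_{\theta}$. Your write-up in fact supplies more detail than the paper (which gives no explicit proof), and the verifications you outline---the collapse of the conditions in (\ref{W5}) to the Yamaguti $1$-cocycle equation and the factoring of $\lambda$ through $C_{\theta}$---are accurate.
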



\begin{center}{\textbf{Acknowledgments}}
\end{center}
The first author is supported by the NSF of
China ( No. 11871421), the Natural Science Foundation of Zhejiang
Province of China ( No. LY19A010001) and the Science and Technology
Planning Project of Zhejiang Province (No. 2022C01118). The second author is supported by the NSF of China (No. 12161013) and    Guizhou Provincial Basic Research Program (Natural Science) (No. ZK[2023]025).

\begin{center} {\textbf{Statements and Declarations}}
\end{center}
 All datasets underlying the conclusions of the paper are available
to readers. No conflict of interest exits in the submission of this
manuscript.



\begin{thebibliography} {99}

\bibitem [1] {60} V. G. Bardakov, M. Singh, Extensions and automorphisms of Lie algebras, J. Algebra Appl. 16 (2017), 1750162.

\bibitem [2] {00} S. Chen, Q. Lou, Q. Sun, Cohomologies of Rota-Baxter Lie triple
systems and applications, Commun. Algebra 51 (10) (2023), 4299-4315.

\bibitem [3] {20} T. Chtioui, A. Hajjaji, S. Mabrouk, A. Makhlouf, Cohomologies
and deformations of $\mathcal O$-operators on Lie triple systems, J.
Math. Phys. 64, 081701 (2023).

\bibitem [4] {004} A. Das, N. Ratheeb, Extensions and automorphisms of Rota-Baxter groups, J. Algebra 636 (2023), 626–665.

\bibitem [5] {010} S. Eilenberg, S. Maclane, Cohomology theory in abstract groups. II.
Group extensions with non-abelian kernel, Ann. Math. 48 (1947), 326-341.


\bibitem [6] {048} Y. Fr{\'e}gier, Non-abelian cohomology of extensions of Lie algebras as Deligne groupoid, J. Algebra 398 (2014), 243–257.

\bibitem [7] {021} S. Goswamia, S. K. Mishraa, G. Mukherjee, Automorphisms of extensions of Lie-Yamaguti algebras and inducibility problem,
J. Algebra 641 (2024), 268–306.

\bibitem [8] {013} J. Gouray, A differential graded Lie algebra approach to non-abelian extensions of associative algebras, arXiv:1802.04641.

\bibitem [9] {08} Y. Guo, B. Hou, Crossed modules and non-abelian extensions of Rota-Baxter Leibniz
algebras, J. Geom.Phys. 191 (2023), 104906.

\bibitem[10] {026} S. K. Hazra, A. Habib, Wells exact sequence and automorphisms of extensions of Lie superalgebras, J. Lie Theory 30 (2020), 179–199.

\bibitem [11] {21} B. Harris, Cohomology of Lie triple systems and Lie algebras with
involution, Trans. Amer. Math. Soc. 98 (1961), 148-162.

\bibitem [12]{22} L. Hodge, J. Parshall, On the representation theory
of Lie triple systems, Trans. Amer. Math. Soc. 354 (2002), 4359-4391.

\bibitem [13] {047} B. Hou, J. Zhao, Crossed modules, non-abelian extensions of associative conformal algebras and Wells exact sequences, arXiv:2211.10842v1.

\bibitem [14] {027} N. Inassaridze, E. Khmaladze, M. Ladra, Non-abelian cohomology and extensions of Lie algebras, J. Lie Theory 18 (2008), 413–432.

\bibitem [15]{51} N. Jacobson, Lie and Jordan triple systems, Amer. J. Math.
71 (1949), 149-170.

\bibitem [16] {045} P. Jin, H. Liu, The Wells exact sequence for the automorphism group of a group extension, J. Algebra 324 (2010), 1219–1228.

\bibitem [17] {30} F. Kubo, Y. Taniguchi, A controlling cohomology of the
deformation theory of Lie triple systems, J. Algebra 278 (2004), 242-250.

\bibitem [18] {09} S. K. Mishra, A. Das, S. K. Hazra, Non-abelian extensions of Rota-Baxter Lie algebras and inducibility
of automorphisms, Linear. Algebra. Appl. 669 (2023), 147-174.

\bibitem [19]{27} W. Lister, A structure theory for Lie triple systems, Trans.
Amer. Math. Soc. 72 (1952), 217-242.

 \bibitem [20] {029} J. Liu, Y. Sheng, Q. Wang, On non-abelian extensions of Leibniz algebras, Commun. Algebra 46 (2) (2018), 574–587.

\bibitem [21] {201} Q. Ma, L. Song, Y. Wang, Non-abelian extensions of pre-Lie algebras, Commun. Algebra
51 (4) (2023), 1370-1382.

\bibitem [22] {54} Y. Ma, L. Chen, J. Lin, Systems of quotients of Lie triple
systems, Commun. Algebra 42 (2014), 3339-3349.

\bibitem [23] {014} N. Machado, J. Oinert, S. Wagner, Non-abelian extensions of
groupoids and their groupoid rings, arXiv:2207.03369.

\bibitem [24] {015} K. H. Neeb, Non-abelian extensions of infinite-dimensional Lie
groups, Ann. Inst. Fourier (Grenoble) 57 (2007), 209-271.

\bibitem [25] {032} I. B. S. Passi, M. Singh, M. K. Yadav, Automorphisms of abelian group extensions, J. Algebra 324 (2010), 820–830.

\bibitem [26] {016} L. Song, A. Makhlouf, R. Tang, On non-abelian extensions of 3-Lie
algebras, Commun. Theor. Phys. 69 (2018), 347-356.

\bibitem [27] {67} C. Wells, Automorphisms of group extensions, Trans. Amer. Math. Soc. 155 (1971), 189-194.

\bibitem [28] {53} R. Yadav, N. Behera, R. Bhutia, Equivariant one-parameter
deformations of Lie triple systems, J. Algebra 568 (2021), 467-479.

\bibitem [29]{34} K. Yamaguti, On the cohomology space of Lie triple system, Kumamoto J.
Sci. Ser. A 5 (1960), 44-52.

\bibitem [30]{135} K. Yamaguti, On the Lie triple system and its generalization, J. Sci. Hiroshima
Univ. Ser. A 21, (1957-1958), 155-159.

\bibitem [31] {35} T. Zhang, Notes on cohomologies of Lie triple systems, J. Lie Theory 24(4)(2014), 909-929.




















\end{thebibliography}
\end {document}